\definecolor{seagreen}{RGB}{46,139,87}
\definecolor{maroon}{RGB}{128,0,0}
\definecolor{darkviolet}{RGB}{148,0,211}
\definecolor{twelve}{RGB}{100,100,170}
\definecolor{thirteen}{RGB}{100,150,50}
\definecolor{fourteen}{RGB}{200,0,0}
\definecolor{fifteen}{RGB}{0,200,0}
\definecolor{sixteen}{RGB}{0,0,200}
\definecolor{seventeen}{RGB}{200,0,200}
\definecolor{eighteen}{RGB}{0,200,200}
\newcommand{\xMapsto}[2][]{\ext@arrow 0599{\Mapstofill@}{#1}{#2}}
\def\Mapstofill@{\arrowfill@{\Mapstochar\Relbar}\Relbar\Rightarrow}
\newcommand{\chana}[1]{\begin{quote}{\color{violet}[\bf Hana: #1]} \end{quote}}
\newcommand{\hana}[1]{{\color{violet}{#1}}}
\newcommand{\dom}[1]{{\color{blue}{#1}}}
\newcommand{\jd}[1]{{\color{red}{#1}}}
\newtheorem{thm}{Theorem}[section]
\newtheorem*{theorem*}{Theorem}
\newtheorem*{conjecture*}{Conjecture}
\newtheorem*{corollary*}{Corollary}
\newtheorem{lem}[thm]{Lemma}
\newtheorem{cor}[thm]{Corollary}
\newtheorem{prop}[thm]{Proposition}
\theoremstyle{definition}
\newtheorem{defin}[thm]{Definition}
\newtheorem{exm}[thm]{Example}
\newtheorem{rem2}[thm]{Remark}
\newtheorem*{rem2*}{Remark}
\newtheorem{ques}[thm]{Question}
\newtheorem{thmx}{Theorem}
\def\c{\mathbb{C}}
\def\f{\mathbb{F}}
\def\g{\mathbb{G}}
\def\m{\mathbb{M}}
\def\q{\mathbb{Q}}
\def\r{\mathbb{R}}
\def\s{\mathbb{S}}
\def\z{\mathbb{Z}}
\def\cd{\mathcal{D}}
\def\Spec{\operatorname{Spec}}
\def\chara{\operatorname{char}}
\def\Ext{\operatorname{Ext}}
\def\cofib{\operatorname{cofib}}
\def\re{\operatorname{Re}}
\def\SH{\operatorname{SH}}
\def\eff{\operatorname{eff}}
\def\veff{\operatorname{veff}}
\def\sc{\operatorname{sc}}
\def\vsc{\operatorname{vsc}}
\def\mASS{\operatorname{mASS}}
\def\aESSS{\operatorname{aESSS}}
\def\aVSSS{\operatorname{aVSSS}}
\renewcommand{\Re}{\operatorname{Re}}
\author{Dominic Culver}\address{Max-Planck-Institut f\"ur Mathematik}\email{dominic.culver@gmail.com}
\author{Hana Jia Kong}\address{The University of Chicago}\email{hanajk@math.uchicago.edu}
\author{J.D. Quigley}\address{Cornell University}\email{jdq27@cornell.edu}
\title{Algebraic slice spectral sequences}
\begin{document}
\maketitle

\begin{abstract}
For certain motivic spectra, we construct a square of spectral sequences relating the effective slice spectral sequence and the motivic Adams spectral sequence. We show the square can be constructed for connective algebraic K-theory, motivic Morava K-theory, and truncated motivic Brown--Peterson spectra. In these cases, we show that the $\r$-motivic effective slice spectral sequence is completely determined by the $\rho$-Bockstein spectral sequence. Using results of Heard, we also obtain applications to the Hill--Hopkins--Ravenel slice spectral sequences for connective Real K-theory, Real Morava K-theory, and truncated Real Brown--Peterson spectra. 
\end{abstract}

\tableofcontents

\section{Introduction} 

\subsection{Motivation and main theorems}
Two of the chief computational tools in motivic stable homotopy theory are the effective slice spectral sequence \cite{Lev08, SO12, Voe98} and the motivic Adams spectral sequence \cite{DI05,Mor99}. The purpose of this paper is to systematically relate the effective slice spectral sequence and the motivic Adams spectral sequence. 

The effective slice spectral sequence has many applications, such as a new proof of Milnor's Conjecture on quadratic forms \cite{RO16} and the calculation of the first stable homotopy group of motivic spheres \cite{RSO19} which identifies certain motivic stable stems with variants of K-theory. In general, the effective slice spectral sequence is an excellent tool for working over general base schemes because slices are often expressible in terms of arithmetic invariants like motivic cohomology. 

The motivic Adams spectral sequence is a powerful resource for making large-scale computations over specific base fields, such as the complex numbers \cite{IWX20}, the real numbers \cite{BI20}, and finite fields \cite{WO17}. Computations over $\Spec(\c)$ have brought about remarkable progress on the classical stable homotopy groups of spheres, and applications of $\r$-motivic computations to $C_2$-equivariant homotopy have been the topic of much recent work, such as \cite{BS20, BGI20, Kon20}. 

Our first main theorem fits the effective slice spectral sequence and motivic Adams spectral sequence into a square of spectral sequences.

\begin{thmx}[Comparison square, Theorem \ref{Thm:comparisonSquare}]\label{MainThm:comparisonSquare}
Let $E \in \SH(k)$ be a motivic spectrum which is algebraically $n$-sliceable over $k$ (Definition \ref{Def:AlgEff}). There is a square of spectral sequences of the form
\begin{equation}\label{Eqn:comparisonSquare}
\begin{tikzcd}
& \bigoplus_{q \geq 0} \Ext^{***}_{A}(H_{**}s_q^n E) \arrow[dl,Rightarrow,"n-aESSS"'] \arrow[dr,Rightarrow,"\bigoplus_{q \geq 0} mASS"] & \\
\Ext^{***}_A(H_{**}E) \arrow[dr,Rightarrow,"mASS"'] & & \bigoplus_{q \geq 0} \pi_{**}s_q^n E \arrow[dl,Rightarrow,"n-ESSS"] \\
& \pi_{**}E. 
\end{tikzcd}
\end{equation}
The spectral sequences in the square are discussed in Sections \ref{Sec:Background} and \ref{Sec:comparisonSquare}.
\end{thmx}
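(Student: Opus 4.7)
My plan is to construct all four spectral sequences from a single bifiltered spectrum obtained by smashing the $n$-effective slice tower of $E$ with a motivic Adams resolution. Once this bifiltration is in place, each of the four spectral sequences in \eqref{Eqn:comparisonSquare} arises by filtering in a prescribed order.

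First I would recall the slice tower $\cdots \to f_{q+1}^n E \to f_q^n E \to \cdots \to f_0^n E = E$, whose successive cofibers are the slices $s_q^n E$. Passing to homotopy groups produces the effective slice spectral sequence ($n$-ESSS) along the right-to-bottom edge, converging from $\bigoplus_q \pi_{**} s_q^n E$ to $\pi_{**} E$. Along the left-to-bottom edge I would invoke the usual motivic Adams spectral sequence $\mASS$ for $E$, arising from a motivic Adams resolution based on mod-$p$ motivic cohomology.

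For the top-right edge, applying $\mASS$ to each slice $s_q^n E$ yields, for every $q$, a spectral sequence from $\Ext^{***}_A(H_{**} s_q^n E)$ to $\pi_{**} s_q^n E$; their direct sum is the spectral sequence labeled $\bigoplus_{q \geq 0} \mASS$. The top-left edge is the more delicate construction. The hypothesis of algebraic $n$-sliceability in Definition \ref{Def:AlgEff} should be tailored so that applying motivic cohomology to the slice tower of $E$ induces a filtration on $H_{**} E$ whose subquotients are the $H_{**} s_q^n E$, and whose long exact sequences in $\Ext_A$ splice into a convergent spectral sequence $n$-$\aESSS$ with $E_2$-page $\bigoplus_q \Ext^{***}_A(H_{**} s_q^n E)$ abutting to $\Ext^{***}_A(H_{**} E)$.

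The main obstacle is precisely the construction of the $n$-$\aESSS$: since $\Ext_A$ is not an exact functor, one cannot apply it term-by-term to a filtration. Here the hypothesis of algebraic $n$-sliceability is designed to force the required behavior, ensuring that the cohomology of the slice tower of $E$ is a genuine filtration of $H_{**} E$ with associated graded given by the sum of slice cohomologies, after which a standard exact-couple argument produces the spectral sequence. With all four spectral sequences in hand, commutativity of the square follows from the observation that each is derived from the same bifiltered spectrum, so that both routes from $\bigoplus_q \Ext^{***}_A(H_{**} s_q^n E)$ to $\pi_{**} E$ compute compatible associated gradeds.
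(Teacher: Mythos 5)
Your high-level architecture (four edges, with the algebraic slice spectral sequence as the delicate one, and the sliceability hypothesis invoked precisely there) matches the paper's, but the mechanism you propose for the top-left edge is not the one the hypothesis supports, and it would not produce the stated square. You assert that algebraic $n$-sliceability gives a filtration of $H_{**}E$ whose subquotients are the $H_{**}s_q^nE$. Definition \ref{Def:AlgEff} says something quite different: the long exact sequences \eqref{Eqn:HLES} split as
\begin{equation*}
0 \to H_{**}f_q^nE \to H_{**}s_q^nE \to H_{*-1,*}f_{q+1}^nE \to 0,
\end{equation*}
equivalently the tower maps $H_{**}(f_{q+1}^nE \to f_q^nE)$ are \emph{zero}. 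This is exactly what happens in the examples ($kgl$, $k(n)$, $BPGL\langle m\rangle$, where e.g.\ $H_{**}(v_1)=0$), so the images of the $H_{**}f_q^nE$ in $H_{**}E$ give no nontrivial filtration at all, and there is no filtration of $H_{**}E$ with associated graded $\bigoplus_q H_{**}s_q^nE$. What the splitting actually provides is a splice of short exact sequences of $A^\vee$-comodules, i.e.\ a resolution-type long exact sequence built out of the $H_{**}s_q^nE$; the paper applies $\Ext_{A^\vee}(\m_p^k,-)$ to these short exact sequences and assembles the resulting long exact sequences into the unrolled exact couple \eqref{Eqn:Unrolled}, which defines the $n$-aESSS. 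The tell-tale sign is the filtration shift in the abutment, $\Ext^{s+q,t+q,u}(H_{**}E)$ in Theorem \ref{Thm:comparisonSquare}: the spectral sequence of a filtration of $H_{**}E$ would abut in the same homological degree $s$, so your proposed construction cannot have the indicated target. Convergence is also not free: the paper verifies conditional convergence in the sense of Boardman (vanishing of $\lim$ and $\lim^1$ of $\Ext^{s-k,t-k,u}(H_{**}f_k^nE)$, using that these groups vanish for $k>s$), with strong convergence only under effectivity plus collapse at a finite stage.

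Separately, you claim commutativity of the square by deriving all four spectral sequences from one bifiltered spectrum. The paper makes no such claim and explicitly disclaims it in the remark following the theorem: a ``square of spectral sequences'' there means only that the two composite routes have the same corners, with no assertion about converging to the same filtration. If you want a genuine compatibility statement from a bifiltration, that would be an additional theorem requiring proof; in particular you would need to identify the purely algebraic left edge (built from comodule short exact sequences, not from a tower of spectra) with an edge of your bifiltered object, which runs into the same issue described above.
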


\begin{rem2*}
In this paper, we refer to a ``square of spectral sequences" whenever one can run the spectral sequences on one half of the square or the other half of the square  to attempt to compute the same groups. In particular, a square of spectral sequences is not the same as a commuting square in the category of spectral sequences of abelian groups, and we make no claims about convergence to the same filtration. 
\end{rem2*}

The condition that $E \in \SH(k)$ is algebraically $n$-sliceable is defined in terms of the homology of its slices and slice covers. Our second main theorem says that several familiar motivic spectra are algebraically $n$-sliceable.

\begin{thmx}[Theorems \ref{Thm:kglAlg}, \ref{Thm:knAlg}, and \ref{Thm:BPGLmSLiceable}, and Remark \ref{Rmk:kqsliceable}]\label{MainThm:kglknAlg}\hfill

Let $k$ be a perfect field. Theorem \ref{MainThm:comparisonSquare}\footnote{Or its very effective variant, see Remark \ref{Rmk:aVSSS}.} applies to the following motivic spectra:

\begin{enumerate}

\item The effective cover of algebraic K-theory, $kgl$, if the characteristic of $k$ is not two. 

\item For all $n \geq 1$, the connective motivic Morava K-theory, $k(n)$, if the exponential characteristic of $k$ is prime to the characteristic of the coefficients of $H_{**}$. 

\item For all $n \geq 1$, the $n$-th truncated motivic Brown--Peterson spectrum, $BPGL \langle n \rangle$, if $k$ has characteristic zero. 

\item The very effective cover of hermitian K-theory, $kq$, if the characteristic of $k$ is not two. 

\end{enumerate}
\end{thmx}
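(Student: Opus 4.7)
The strategy is to verify the algebraic $n$-sliceability hypothesis (Definition~\ref{Def:AlgEff}) for each of the four spectra individually, after which Theorem~\ref{MainThm:comparisonSquare} applies automatically. Since this hypothesis is phrased in terms of the motivic homology of the slices $s_q^n E$ and slice covers $f_q^n E$, the core task in each case is (a) to identify the slices with (shifts of) motivic Eilenberg--MacLane spectra, and (b) to show that the cofiber sequences $f_{q+1}^n E \to f_q^n E \to s_q^n E$ induce short exact sequences on $H_{**}$ as comodules over the dual motivic Steenrod algebra $A$, so that the algebraic effective slice spectral sequence $\aESSS$ can even be formed.

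For step (a) I would appeal to the existing slice literature. For $kgl$, the identification $s_q(kgl)\simeq\Sigma^{2q,q}H\z$ is Voevodsky's slice conjecture for $KGL$, proved by Levine. For $k(n)$, the slices of connective motivic Morava K-theory over a perfect field whose exponential characteristic is prime to the coefficient characteristic are shifts of $H\z/\ell$ indexed by powers of $v_n$, essentially the motivic analogue of the classical Adams tower for $k(n)$. For $BPGL\langle n\rangle$, the slices are shifts of $H\z$ indexed by monomials in $v_1,\ldots,v_n$, obtained from the Hopkins--Morel--Hoyois identification of the slices of $MGL$ by base-changing to $BPGL\langle n\rangle$; the characteristic-zero hypothesis keeps Hopkins--Morel--Hoyois available in the form we need. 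For $kq$ I would use the very effective slice computation of Ananyevskiy--R\"ondigs--\O stv\ae r and pass to the very effective variant of the comparison square (Remark~\ref{Rmk:aVSSS}).

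Step (b) reduces, via Voevodsky's computations of $H_{**}H\z$, $H_{**}H\z/\ell$, and the motivic Steenrod algebra, to a statement purely about $A$-comodules, and I expect this to be the main obstacle. A priori one only has a long exact sequence on $H_{**}$, so one must show that the connecting homomorphism vanishes. The key tool is a bidegree/weight argument: for each of the spectra above, $s_q^n E$ is concentrated in motivic weights $\geq q$ with leading generators in weight exactly $q$, while $f_{q+1}^n E$ lives in motivic weights $\geq q+1$, so the connecting map $H_{**}s_q^n E \to \Sigma^{1,0}H_{**}f_{q+1}^n E$ has no room to be nonzero on the weight-$q$ generators. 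An induction on $q$ (or on the slice filtration of $f_{q+1}^n E$, using that the bounds improve at each stage) then closes the argument. I would carry this out spectrum by spectrum, since the precise $A$-comodule structure of $H_{**}s_q^n E$---free over $A$ versus induced up from a quotient Hopf algebroid like $A /\!/ E(Q_0,\ldots,Q_n)$---differs in each case, and the inductive analysis is cleanest when the explicit shape of the slices is in hand. The $kq$ case requires the additional wrinkle that very effective slices of $kq$ are a mixture of $H\z$ and $H\z/2$ summands, but the same weight argument still applies once one replaces $\aESSS$ by $\aVSSS$.
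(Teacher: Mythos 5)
There is a genuine gap, and it starts with what you set out to prove in step (b). Definition \ref{Def:AlgEff} does \emph{not} ask for the connecting homomorphism to vanish: the required short exact sequences are
$0 \to H_{**}f_q^nE \to H_{**}s_q^nE \to H_{*-1,*}f_{q+1}^nE \to 0$,
i.e.\ $H_{**}f_q^nE$ \emph{injects} into the homology of the slice and the connecting map is \emph{surjective} onto (a shift of) $H_{**}f_{q+1}^nE$. Equivalently, what must be shown is that the maps $H_{**}(f_{q+1}^nE \to f_q^nE)$ are zero. Proving that the connecting homomorphism vanishes would instead produce sequences $0 \to H_{**}f_{q+1}^nE \to H_{**}f_q^nE \to H_{**}s_q^nE \to 0$, which is the opposite splitting and is in fact false for these spectra: already for $kgl$ over $\c$, $f_{q}kgl \simeq \Sigma^{2q,q}kgl$ and the connecting map surjects onto $\Sigma^{2q+3,q+1}(A//E(1))^\vee \neq 0$. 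Your weight argument also fails on its own terms, since $\m_p^k$ contains $\tau$ in weight $-1$ (and $K^M_*(k)/p$ in negative weights), so $H_{**}s_q^nE$ is spread over weights far above and below $q$ and there is no "no room" obstruction; this is consistent with the connecting map genuinely being nonzero.

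The mechanism that actually works, and that the paper uses, is to show the cover maps die in homology: for $kgl$ and $k(n)$, periodicity ($\Sigma^{2,1}f_qKGL \simeq f_{q+1}KGL$, and its $k(n)$-analogue for the type-$n$ filtration) identifies $f_{q+1}^nE \to f_q^nE$ with multiplication by $v_1$ (resp.\ $v_n$), and $H_{**}(v_n)=0$; for $kq$ one passes to a \emph{quadruple-speed} very effective filtration where the cover maps become the Bott map $\beta$, again zero in homology (the ordinary-speed slices are not used, and their homology is not even computed). The $BPGL\langle n\rangle$ case is the one your outline underestimates most: it is \emph{not} algebraically $1$-sliceable (the homological slice spectral sequence has differentials $d_{2^i-1}(\bar\tau_i)=v_i$ of length $>1$), and establishing $m$-sliceability requires the equivalence, proved in Lemmas \ref{Lem:lemma} and \ref{Lem:converselemma}, between vanishing of the cover maps in homology and collapse at $E_2$ of the homological slice spectral sequence, together with the explicit differential computation of Theorem \ref{Thm:BPGLHVSS} (done over $\q$ or $\r$ by degree arguments and then base-changed to arbitrary characteristic-zero fields). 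Without replacing your step (b) by an argument of this kind, the proposal does not establish algebraic $n$-sliceability for any of the four examples.
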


The $\rho$-Bockstein spectral sequence \cite{Hil11} is another important device in computational motivic stable homotopy theory. Theorems \ref{MainThm:comparisonSquare} and \ref{MainThm:kglknAlg} can be used to prove a precise relation between differentials in the $\rho$-Bockstein spectral sequence and the $\r$-motivic effective slice spectral sequence:

\begin{thmx}[Theorems \ref{Thm:RCorrespondence} and \ref{Thm:BPGLCorrespondence}]\label{MainThm:Differentials}
Let $k=\r$. There are 1-to-1 correspondences between the following differentials:
\begin{enumerate}
\item $\rho$-Bockstein spectral sequence $d_3$-differentials and effective slice spectral sequence $d_1$-differentials for $kgl$.
\item $\rho$-Bockstein spectral sequence $d_{2^{n+1}-1}$-differentials and type $n$ effective slice spectral sequence $d_1$-differentials for $k(n)$. 
\item For each $1 \leq i \leq n$, $\rho$-Bockstein spectral sequence $d_{2^{i+1}-1}$-differentials and effective slice spectral sequence $d_{2^i-1}$-differentials for $BPGL \langle n \rangle$. 
\end{enumerate}
Moreover, the (type $n$) effective slice spectral sequence differentials are completely determined by knowledge of the corresponding $\rho$-Bockstein spectral sequence differentials.
\end{thmx}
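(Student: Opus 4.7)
The plan is to chase differentials through the comparison square of Theorem \ref{MainThm:comparisonSquare}, applied to $E \in \{kgl,\, k(n),\, BPGL\langle n\rangle\}$ over $k=\r$, using the algebraic $n$-sliceability supplied by Theorem \ref{MainThm:kglknAlg}. The core observation is that over $\r$, both the $\rho$-Bockstein spectral sequence and the algebraic effective slice spectral sequence are $\rho$-filtered spectral sequences whose inputs are closely related to $\Ext^{***}_A(H_{**}E)$, so matching them page-by-page reduces to identifying their filtrations on a common target.

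First I would verify that for each spectrum $E$ in question, the mASS for every slice $s_q^n E$ collapses. Since the slices are built from motivic Eilenberg--MacLane spectra (this is the content of algebraic $n$-sliceability), the $E_2$-term is concentrated in a single Adams line and no higher differentials are possible. Consequently, along the right-hand edge of the comparison square, each ESSS differential on $\pi_{**} s_q^n E$ lifts uniquely to an aESSS differential on the top edge, and vice versa. This reduces the theorem to matching aESSS differentials with $\rho$-BSS differentials on $\Ext^{***}_A(H_{**}E)$.

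Next I would identify the aESSS for $E$ as the component of the $\rho$-BSS for $E$ coming from the slice filtration. Each individual slice is a sum of (suspensions of) motivic cohomology spectra, whose own $\rho$-BSS is trivial by inspection of the coefficient ring, so every nontrivial $\rho$-BSS differential for $E$ records how neighbouring slices are attached---exactly the data tabulated by the aESSS. To read off the page shifts claimed in the statement, I would perform a bidegree computation on a single named polynomial generator in each family (namely $v_1$ for $kgl$, $v_n$ for $k(n)$, and each $v_i$ with $1\leq i\leq n$ for $BPGL\langle n\rangle$), comparing the $\rho$-Bockstein weight-drop required to land in the next slice against the slice filtration shift. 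Iterating through the $v_i$ gives the family of correspondences for $BPGL\langle n\rangle$.

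The main obstacle will be the second step: rigorously identifying the aESSS as the piece of the $\rho$-BSS carrying all its nonzero differentials, with the precise page renumbering dictated by the motivic weights. This amounts to proving that the $\rho$-Bockstein filtration on $\r$-motivic Ext refines the filtration coming from the slice tower, and checking that the page labels $2^{i+1}-1$ versus $2^{i}-1$ fall out of the weight of $v_i$. Once that identification is in place, the final assertion that the (type $n$) effective slice differentials are \emph{completely determined} by the $\rho$-BSS differentials is immediate: a $\rho$-BSS differential produces the corresponding aESSS differential by the matching, and the slice-wise mASS collapse then lifts it uniquely to an ESSS differential on the prescribed page.
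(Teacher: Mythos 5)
Your overall architecture coincides with the paper's: use the comparison square and collapse of the motivic Adams spectral sequences to reduce the statement to a comparison of the (type $n$) aESSS with the $\rho$-BSS (this is Lemma \ref{Lem:algtoSSS}), and then match those two. But the second step --- which you yourself flag as the main obstacle and leave unproved --- is where all the content lies, and the heuristic you offer for it rests on a false premise. For $kgl$ and $BPGL\langle n\rangle$ the slices are (extensions of) suspensions of $H\z_2$, and the $\rho$-BSS converging to $\Ext_{A^\r}^{***}(H\z_2)\cong \f_2[\tau^2,\rho,v_0]/(\rho v_0)$ is \emph{not} trivial: it has $d_1(\tau)=\rho v_0$. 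Hence it is not true that every nontrivial $\rho$-BSS differential for $E$ records how neighbouring slices are attached; the $d_1$-family for $kgl$ is internal to each slice and corresponds to no ESSS differential at all, which is precisely why the theorem asserts a correspondence only for the $d_3$ (resp.\ $d_{2^{n+1}-1}$, $d_{2^{i+1}-1}$) differentials. Your proposed identification of the aESSS as ``the piece of the $\rho$-BSS carrying all its nonzero differentials,'' via a claim that the $\rho$-Bockstein filtration refines the slice filtration, is neither established nor needed in the paper; what the paper does instead (Proposition \ref{Prop:rhoalg}) is run a second square \eqref{Eqn:RhoAlg} whose rows are the $\rho$-BSS for the slice associated graded and for $E$, and whose columns are the $\c$- and $\r$-motivic aESSS: the $\c$-motivic column collapses for quad-degree reasons, the top and bottom rows are explicitly known (Hill's computations; Proposition \ref{Prop:RhoBocksteinEQn}), and consistency of the two routes then forces the aESSS $d_1$-differentials. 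Your sketch supplies no substitute for these inputs.

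For $BPGL\langle n\rangle$ there is a further layer a bidegree check on the $v_i$ does not capture: the $\rho$-BSS of the type-$m$ slices is not simply the $\rho$-BSS of $E$ reduced mod $v_m$ --- differentials whose targets are divisible by $v_m$ (``long'') or which cross type-$m$ slice boundaries (``fringe,'' cf.\ Example \ref{Exm:Fringe}) fail to occur slice-wise, and the paper needs an induction on $m$, naturality along $BPGL\langle m\rangle \to BPGL\langle m-1\rangle$ and $BPGL\langle m\rangle \to k(m)$, and the ESSS Leibniz rule to sort these into the $i<m$ versus $i=m$ correspondences. Finally, a smaller point: the slice-wise mASS collapse is correct but not because the $E_2$-page is concentrated in a single Adams line ($\Ext_{E(0)}$ has $v_0$-towers in all filtrations); it holds for tridegree reasons as in Lemma \ref{Lem:mASSCollapse}, and you also need collapse of the mASS for $E$ itself to pass differentials between the two halves of the comparison square.
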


Formal similarities between the $\rho$-Bockstein and effective slice spectral sequence differentials have been observed in previous work, such as \cite[Sec. 6]{Hea19}. Theorem \ref{MainThm:Differentials} and its proof may be viewed as a rigorous explanation for these similarities. 

Finally, our computations have consequences in the $C_2$-equivariant stable homotopy theory. The Hill--Hopkins--Ravenel slice spectral sequence \cite{HHR16} has enjoyed many applications since its use in the solution to the Kervaire invariant one problem, and thus we are motivated to calculate differentials for common $C_2$-spectra. In previous work, Heard \cite{Hea19} compared the $\r$-motivic effective slice filtration and the Hill--Hopkins--Ravenel slice filtration \cite{HHR16} and deduced certain differentials in the effective slice spectral sequence using previously computed differentials in the HHR slice spectral sequence. Using Heard's comparison results and Theorem \ref{MainThm:Differentials}, we reverse this logic to produce differentials in the HHR slice spectral sequences for the genuine $C_2$-spectra $k\r$, $k\r(n)$, and $BP\r\langle n \rangle$ from known slice spectral sequence differentials. Our results recover work of Dugger \cite{Dug05} and Hill--Hopkins--Ravenel \cite{HHR16}. 

\begin{thmx}[Corollaries \ref{Cor:kRdiff}, \ref{Cor:kndiff}, and \ref{Cor:BPRdiff}]\label{MainThm:C2}
The following statements hold:

\begin{enumerate}

\item The nontrivial differentials in the HHR slice spectral sequence for $\pi_{**}^{C_2}k\r$ are determined via Leibniz rule by
$$d_3(\tau^2) = \rho^3 \bar{v}_1 \quad \text{ and } \quad d_3\left(\frac{\gamma}{\rho^3 \tau^2} \right) = \frac{\gamma}{\tau^4} \bar{v}_1.$$

\item Let $n \geq 1$. There is a nontrivial differential in the HHR slice spectral sequence for $\pi_{**}^{C_2}k\r(n)$ of the form $d_{2^{n+1}-1}(\tau^{2^{n+1}}) = \rho^{2^{n+1}-1}\bar{v}_n$. 

\item Let $n \geq 1$. There are nontrivial differentials in the HHR slice spectral sequence for $\pi_{**}^{C_2}BP\r\langle n \rangle$ of the form
$d_{2^{i+1}-1}(\tau^{2^{i+1}}) = \rho^{2^{i+1}-1}\bar{v}_i$, $1 \leq i \leq n$.

\end{enumerate}

The relevant elements are recalled in Remark \ref{Rem:namingconvention}. 
\end{thmx}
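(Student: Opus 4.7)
The plan is to derive these $C_2$-equivariant differentials by a two-step transport: first use Theorem \ref{MainThm:Differentials} to compute differentials in the $\r$-motivic effective slice spectral sequence for $kgl$, $k(n)$, and $BPGL\langle n\rangle$, then apply Heard's comparison results from \cite{Hea19} to push these differentials across Betti realization to the corresponding HHR slice spectral sequences for $k\r$, $k\r(n)$, and $BP\r\langle n\rangle$.

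First, I would compute the relevant $\rho$-Bockstein differentials. For each of the three spectra, the input is the $\rho$-Bockstein spectral sequence converging to the $E_2$-page of an Adams-style spectral sequence. For $kgl$ one needs a $d_3$ on $\tau^2$ landing on $\rho^3\bar v_1$; for $k(n)$ one needs a $d_{2^{n+1}-1}$ on $\tau^{2^{n+1}}$ landing on $\rho^{2^{n+1}-1}\bar v_n$; for $BPGL\langle n\rangle$ one needs the analogous family of differentials for each $1 \le i \le n$. These differentials are forced by the known structure of the $\rho$-Bockstein spectral sequence over $\r$ (in particular by Hill's computation \cite{Hil11} and its extensions), and by the requirement that the target of the Adams spectral sequence matches known $\r$-motivic homotopy groups; multiplicative naturality from $BPGL\langle n\rangle$ then covers all three cases uniformly. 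The differentials on $\gamma/(\rho^3\tau^2)$ for $kgl$ come from the same input by Leibniz rule and the known action of $\gamma$ on the slice summands.

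Next, invoke Theorem \ref{MainThm:Differentials} to translate each $\rho$-Bockstein differential into a corresponding differential in the $\r$-motivic effective slice spectral sequence: the stated correspondences $d_3 \leftrightarrow d_1$ (for $kgl$), $d_{2^{n+1}-1} \leftrightarrow d_1$ (for $k(n)$), and $d_{2^{i+1}-1} \leftrightarrow d_{2^i-1}$ (for $BPGL\langle n\rangle$) allow this step to be performed mechanically. Then apply Heard's comparison \cite{Hea19}: under Betti realization, the $\r$-motivic effective slice filtration is compatible with the HHR slice filtration on $k\r$, $k\r(n)$, and $BP\r\langle n\rangle$, so differentials transport across, sending $\tau$, $\rho$, $\bar v_i$ in the motivic setting to their $C_2$-equivariant namesakes as recalled in Remark \ref{Rem:namingconvention}. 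Finally, multiplicativity and the Leibniz rule extend the stated generating differentials to all nontrivial differentials in the $k\r$ case.

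The main obstacle I anticipate is the precise bookkeeping across the two transports: ensuring that the named generators $\tau^{2^{i+1}}$, $\rho^{2^{i+1}-1}\bar v_i$, and the class $\gamma/(\rho^3\tau^2)$ in the HHR slice spectral sequence correspond, after Heard's identification and Betti realization, to the classes on the motivic side that appear as differential sources and targets — and that no lower differential could have killed either end beforehand. The $\gamma/(\rho^3\tau^2)$ differential for $k\r$ is the trickiest point, since $\gamma$ has no direct analogue in the motivic effective slice filtration and the differential must be deduced from Leibniz rule together with the fact that $\gamma/\tau^4$ is a permanent cycle detected in the appropriate slice; I would handle this by reducing to the module structure over the $\r$-motivic effective slice spectral sequence for $kgl$ and appealing to the fact that the only available $d_3$ on the relevant bidegree is the one claimed. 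The remaining statements are then direct consequences, and they recover the computations of Dugger \cite{Dug05} and Hill--Hopkins--Ravenel \cite{HHR16}.
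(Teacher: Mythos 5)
Your proposal is correct and follows essentially the same route as the paper: the $\rho$-Bockstein differentials (Hill's computation and Proposition \ref{Prop:RhoBocksteinEQn}) are converted into $\r$-motivic effective slice differentials via Theorem \ref{MainThm:Differentials}, then transported to the HHR slice spectral sequences through Heard's comparison (Corollary \ref{Cor:AAA}), with the negative-cone class handled by the Leibniz rule. The only cosmetic difference is that the paper deduces $d_3\left(\frac{\gamma}{\rho^3\tau^2}\right)$ entirely on the $C_2$-equivariant side, by applying $d_3$ to the product $\frac{\gamma}{\rho^3\tau^2}\cdot\tau^2$, rather than via any reduction to motivic module structure.
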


\subsection{Organization}

In Section \ref{Sec:Background}, we recall the functors and spectral sequences discussed in this paper, such as (equivariant) Betti realization, variants of the effective and very effective slice spectral sequences, and the motivic Adams spectral sequence. 

In Section \ref{Sec:comparisonSquare}, we define when a motivic spectrum is algebraically $n$-sliceable and prove Theorem \ref{MainThm:comparisonSquare}. We then analyze several examples and non-examples of algebraically $n$-sliceable spectra, and in particular we prove Theorem \ref{MainThm:kglknAlg}.

In Section \ref{Sec:Differentials}, we recall the $\rho$-Bockstein spectral sequence and differentials in the motivic Adams spectral sequence; we then prove Theorem \ref{MainThm:Differentials}.

In Section \ref{Sec:C2}, we recall Heard's comparison of the motivic and $C_2$-equivariant slice filtrations and apply it to prove Theorem \ref{MainThm:C2}.

\subsection{Conventions}
\label{subsection:conventions}
\begin{enumerate}
\item After Section \ref{Sec:Background}, everything is implicitly $(p,\eta)$-complete.\footnote{This agrees with $p$-completion in many cases we consider. See e.g. \cite[Thm. 1]{HKO11a}.} 
\item $k$ is a perfect field of expontential characteristic prime to $p$. 
\item $\SH$ denotes the classical stable homotopy category, $\SH(k)$ denotes the motivic stable homotopy category over $\Spec(k)$, and $\SH^{C_2}$ denotes the $C_2$-equivariant stable homotopy category. 
\item $H$ denotes the classical, equivariant, or motivic mod $p$ Eilenberg-MacLane spectrum. 
\item $\m^k_p$ denotes the mod $p$ motivic cohomology of $\Spec(k)$. 
\item $A$ (resp. $A^\vee$) denotes the motivic Steenrod algebra (resp. its $\m_p^k$-linear dual).
\item If $M$ is an $A$-module, $M^\vee$ denotes its $\m_p^k$-linear dual. 
\item The homotopy groups of $E \in \SH(k)$ are denoted $\pi_{m,u}^k(E)$, or if there is no confusion about the base field, $\pi_{m,u}(E)$. Our bigrading convention is that $\pi_{m,u}^k(E) \cong \SH(k)[\Sigma^\infty (S^1_\bullet)^{\wedge m-u} \wedge \g_m^{\wedge u}, E]$. 
\item The homotopy groups of $E \in \SH^{C_2}$ are denoted $\pi_{m,u}^{C_2}(E)$. Our bigrading convention is that $\pi_{m,u}^{C_2}(E) \cong \SH^{C_2}[\Sigma^\infty S^{m-u} \wedge S^{u\sigma}, E]$ where $\sigma$ is the one-dimensional real sign representation of $C_2$. 
\item We will often write $\Ext_A^{***}(E)$ in place of $\Ext_A^{***}(H_{**}E)$. 
\item We use the following abbreviations for spectral sequences:
\begin{enumerate}
\item mASS: motivic Adams spectral sequence.
\item ESSS and VSSS: effective and very effective slice spectral sequence.
\item aESSS and aVSSS: algebraic effective and very effective slice spectral sequence.
\item $n$-(-)SSS: type $n$ (-)SSS, with (-) either E, V, aE, or aV. 
\item $H_*$-$n$-(-)SSS: homological type $n$ (-)SSS, with (-) as above.
\item $\rho$-BSS: $\rho$-Bockstein spectral sequence.
\end{enumerate} 
\item Notation for $C_2$-equivariant homotopy elements is contained in Table \ref{Table:C2}.

\begin{table}
\begin{tabular}{ | c | c | c | c || c | c | c | c | }
 \hline
 \multicolumn{4}{|c|}{$\pi_{**}^{C_2}H\f_2$} & \multicolumn{4}{|c|}{$\pi_{**}^{C_2}H\z_2$} \\
 \hline
 element & alt. name & bidegree & $RO(C_2)$-degree & element & alt. name & bidegree & $RO(C_2)$-degree \\
 \hline
$\rho$ & $a_\sigma$ & $(-1,-1)$ & $-\sigma$ & $\rho$ & $a_\sigma$ & $(-1,-1)$ & $-\sigma$ \\
$\tau$ & $u_\sigma$ & $(0,-1)$ & $1-\sigma$ & $\tau^2$ & $u_{2\sigma}$ & $(0,-2)$ &  $2-2\sigma$\\
$\frac{\gamma}{\tau}$& $\theta$ & $(0,2)$  & $2\sigma-2$ & $\frac{\gamma}{\tau}$ & $2u_{2\sigma}^{-1}$ & $(0,2)$ & $2\sigma-2$\\
	& 			& 			& 	& $\frac{\gamma}{\tau^2}$ & $\theta$ & $(0,3)$ & $3\sigma-3$\\
 \hline
\end{tabular}
\caption{Names, alternate names, and degrees of elements in $C_2$-equivariant homotopy groups.}\label{Table:C2}
\end{table}

\end{enumerate}

\subsection{Acknowledgements}

The authors thank the organizers of the BIRS workshop ``Equivariant stable homotopy theory and $p$-adic Hodge theory," where some early stages of this work were completed. The authors thank Mark Behrens, Bob Bruner, Drew Heard, Dan Isaksen, XiaoLin Danny Shi, Inna Zakharevich, and an anonymous referee for helpful comments and questions. Some of the work on this paper was done while the first author was at the Max Planck Institute for Mathematics. He would like to thank the Institute for their support. 

\section{Motivic slice and Adams spectral sequences}\label{Sec:Background}

We start by discussing the functors and spectral sequences which we will study in the rest of the paper.

\subsection{(Equivariant) Betti realization functors}\label{SS:Betti}

Since we will mention them in our discussion of the various spectral sequences below, we start by recalling the Betti realization and equivariant Betti realization functors which relate the motivic, classical, and $C_2$-equivariant stable homotopy categories. 

Suppose the base field $k$ has a complex embedding. The Betti realization functor \cite{MV99}
\begin{equation}\label{Eqn:Betti}
\Re: \SH(k) \to \SH
\end{equation}
is defined by sending $X \in \SH(k)$ to $X(\c) \in \SH$. Similarly, if $k$ has a real embedding, the equivariant Betti realization functor \cite{MV99}
\begin{equation}\label{Eqn:EBetti}
\Re_{C_2}: \SH(k) \to \SH^{C_2}
\end{equation}
is defined by sending $X \in \SH(k)$ to $X(\c) \in \SH^{C_2}$, where $C_2$ acts on $X(\c)$ via complex conjugation. 

\subsection{The type $n$ effective slice spectral sequence}\label{SS:EffSSS}

We now introduce one of our main objects of study, the type $n$ effective slice spectral sequence, which generalizes the effective slice spectral sequence. Throughout this section, we fix a prime $p$. 

Let $\SH^{\eff}(k)$ be the localizing subcategory of $\SH(k)$ generated by $\{ \Sigma^\infty_+ X : X \in Sm_k\}$. These subcategories form the \emph{effective slice filtration}
\begin{equation}\label{Eqn:EffFiltSH}
\cdots \subset \Sigma^{2q+2,q+1} \SH^{\eff}(k) \subset \Sigma^{2q,q} \SH^{\eff}(k) \subset \Sigma^{2q-2,q-1} \SH^{\eff}(k) \subset \cdots.
\end{equation}
The inclusion $i_q: \Sigma^{2q,q}\SH^{\eff}(k) \subset \SH(k)$ is left adjoint to the functor $r_q: \SH(k) \to \Sigma^{2q,q}\SH^{\eff}(k)$. 

\begin{defin}
The composite $f_q:=i_q \circ r_q$ is the \emph{$q$-th effective slice cover} and $s_q := \cofib(f_{q+1} \to f_q)$ is the \emph{$q$-th effective slice}. 

For each $n \geq 1$, we define the \emph{$q$-th type $n$ effective slice cover} $f_q^n := f_{q(p^n-1)}$ and the \emph{$q$-th type $n$ effective slice} $s_q^n := \cofib(f_{q+1}^n \to f_q^n)$. Observe that $f_q^1 = f_q$ if $p=2$. 
\end{defin}

For any motivic spectrum $E \in \SH(k)$ and any integer $n \geq 1$, we obtain a $\z$-indexed tower of fibrations
\begin{equation}\label{Eqn:EffTower}
\begin{tikzcd}
\vdots \ar[d] & \\
f_{q+1}^nE \ar[d]\ar[r] & s_{q+1}^nE \\
f_q^nE \ar[d]\ar[r] & s_q^nE\\
f_{q-1}^nE \ar[d] \ar[r] & s_{q-1}^nE \\
\vdots 
\end{tikzcd}
\end{equation}
called the \emph{type $n$ effective slice tower}. Applying motivic homotopy groups gives rise to the \emph{type $n$ effective slice spectral sequence}
\begin{equation}\label{Eqn:EffSSS}
{^{n,\eff}E}_{m,q,u}^1(E) = \pi_{m,u}s_q^nE \Rightarrow \pi_{m,u}\sc(E)
\end{equation}
with differentials $d^r(E) : {^{n,\eff}E}^r_{m,q,u}(E) \to {^{n,\eff}E}^{r}_{m-1,q+r,u}(E)$. 
Let $f^q_nE$ denote the cofiber of $f_{q+1}^n E\to E.$
The term $\sc(E)$ in the abutment is the \emph{slice completion} of $E$ defined as the homotopy inverse limit $\sc(E) := \lim_q f_q E$. 
For each $n$, we have $\sc(E) \simeq \lim_q f^q_nE$ since the type $n$ effective slice filtration is a sped up version of the ordinary effective slice filtration. 

For each $E \in \SH(k)$ we study below, there is an equivalence $\sc(E) \simeq E$ by \cite[Thm. 8.12]{Hoy15}. More generally, \cite[Thm. 3.50]{RSO19} implies that in very general situations, $\sc(E)$ may be identified with the $\eta$-completion of $E$. 

\subsection{The type $n$ very effective slice spectral sequence}\label{SS:VeffSSS}

We now discuss a variant of the type $n$ effective slice spectral sequence. Throughout this section, fix an integer $n \geq 1$ and prime $p$. 

Let $\SH^{\veff}(k)$ be the smallest full subcategory of $\SH(k)$ that contains all suspension spectra of smooth $k$-schemes of finite type and which is closed under all homotopy colimits and extensions. Repeating the construction from Section \ref{SS:EffSSS} with $\SH^{\veff}(k)$ in place of $\SH^{\eff}(k)$ yields the \emph{$q$-th type $n$ very effective slice cover} $\tilde{f}_q^n$ and the \emph{$q$-th type $n$ very effective slice} $\tilde{s}_q^n$. These fit into a tower analogous to \eqref{Eqn:EffTower} which gives rise to the \emph{type $n$ very effective slice spectral sequence}
\begin{equation}\label{Eqn:VeffSSS}
^{n,\veff}E_{m,q,u}^1(E) = \pi_{m,u}\tilde{s}_q^nE \Rightarrow \pi_{m,u}\vsc(E) \cong \pi_{m,u} E
\end{equation}
with differentials $d^r(E) : {^{n,\veff}E}^r_{p,q,u}(E) \to {^{n,\veff}E}^r_{p-1,q+r,u}(E)$. If we let $\tilde{f}^q_n E$ denote the cofiber of the map $\tilde{f}_{q+1}^n E\to E$, then the \emph{very effective slice completion} $\vsc(E) := \lim_q \tilde{f}^q_1 E\simeq \lim_q \tilde{f}^q_n E$ is equivalent to $E$ because $\tilde{s}_q^n E$ is $q(p^n-1)$-connected in Morel's homotopy t-structure on $\SH(k)$ for all $q \in \z$.\footnote{See \cite[Sec. 5]{SO12} for a full discussion of convergence of the very effective slice spectral sequence.} 

\begin{rem2}\label{Rmk:VESSPostnikov}
Suppose the ground field $k$ admits a complex embedding. Then the very effective slice spectral sequence over $\Spec(k)$ is compatible with the double-speed Postnikov filtration under Betti realization \cite{GRSO12}, so there is a map of spectral sequences from the very effective slice spectral sequence for $X \in \SH(k)$ to the double-speed Postnikov spectral sequence of $\Re(X)$. 
\end{rem2}

\begin{rem2}\label{Rmk:VESSHHR}
Suppose the ground field $k$ admits a real embedding. Then the very effective slice spectral sequence over $\Spec(k)$ for a localized quotient of $MGL$ is compatible with the Hill--Hopkins--Ravenel slice spectral sequence \cite{HHR16} of its equivariant Betti realization by \cite[Thms. 5.15-5.16]{Hea19}. 
\end{rem2}

\begin{rem2}\label{Rmk:Coincide}
The effective and very effective slice filtrations coincide for localized quotients of $MGL$ \cite[Prop. 4.3]{Hea19} and Landweber exact motivic spectra \cite[Prop. 4.11]{Hea19}.
\end{rem2}

\subsection{The motivic Adams spectral sequence}\label{SS:mASS}

Let $E \in \SH(k)$ be a unital motivic spectrum and let $\overline{E} := \cofib(S^{0,0} \to E)$. The \emph{canonical $E$-based Adams resolution} of the sphere spectrum $\s$ is given by
\begin{equation}\label{Eqn:EAdamsResolution}
\begin{tikzcd}
\s \ar[d] & \Sigma^{-1,0} \overline{E} \ar[l]\ar[d] & \Sigma^{-2,0} \overline{E}^{\wedge 2} \ar[l]\ar[d] & \cdots \ar[l] \\
E & \Sigma^{-1,0} E \wedge \overline{E} & \Sigma^{-2,0} E \wedge \overline{E}^{\wedge 2}. & 
\end{tikzcd}
\end{equation}
The canonical $E$-based Adams resolution over a motivic spectrum $X \in \SH(k)$ is obtained by smashing every term in  \eqref{Eqn:EAdamsResolution} with $X$. 

When $E = H$ is mod $p$ motivic cohomology, we have $H_{**}(S^{0,0}) = \m_p^k$ and $\pi_{**}(H \wedge H) = A^\vee$ is the dual motivic Steenrod algebra \cite{HKO17, Voe03}. Applying motivic homotopy groups to \eqref{Eqn:EAdamsResolution} gives the \emph{motivic Adams spectral sequence}
\begin{equation}\label{Eqn:mASS}
{^{\mASS}E}_2^{s,t,u} = \Ext_{A^\vee}^{s,t,u}( \m^k_p, H_{**}(X)) \Rightarrow \pi_{t-s,u}(X_H^\wedge)
\end{equation}
with $d_r : {^{\mASS}E}_r^{s,t,u} \to {}^{\mASS}E_r^{s+r,t+r-1,u}$. Here, $X_H^{\wedge}$ is the \emph{$H$-nilpotent completion} of $E$. By \cite[Thm. 1]{HKO11a}\cite[Cor. 6.1]{KW19}\cite[Thm. 1.0.1]{Man18}, there is an equivalence $X_H^\wedge \simeq X_{(p,\eta)}^\wedge$ for $X$ sufficiently nice\footnote{In this paper, ``sufficiently nice" includes all motivic spectra $X$ which are connective and cellular of finite type.}.  Moreover, if $k$ is a field of characterstic zero, and if $p>2$ and $\cd_p(k) < \infty$ and $-1 \in k$ is a sum of squares, or if $p=2$ and $k$ has finite virtucal cohomological dimension, then $X_{(p,\eta)}^\wedge \simeq X_p^\wedge$ \cite{HKO11a}. 

\begin{rem2}
The motivic homology $\m_p^k$ is reviewed in Section \ref{SS:RhoBSS}.
\end{rem2}

\begin{rem2}
As in Remark \ref{Rmk:VESSPostnikov}, if the ground field $k$ admits a complex embedding, the motivic Adams and Adams--Novikov filtrations of $X \in \SH(k)$ are compatible with the Adams and Adams--Novikov filtrations of $\Re(X) \in \SH$. Thus Betti realization induces a map from the motivic Adams(--Novikov) spectral sequence to the classical Adams(--Novikov) spectral sequence. 

Similarly (c.f. Remark \ref{Rmk:VESSHHR}), if the ground field $k$ admits a real embedding, then the motivic Adams and Adams--Novikov filtrations of $X \in \SH(k)$ are compatible with the $C_2$-equivariant Adams and Adams--Novikov filtrations of $\Re_{C_2}(X) \in \SH^{C_2}$. Thus equivariant Betti realization induces a map between the corresponding spectral sequences. 
\end{rem2}

\section{Comparison square and examples}\label{Sec:comparisonSquare}

In this section, we introduce a condition on a motivic spectrum which allows for the construction of a comparison square relating the effective slice spectral sequence and the motivic Adams spectral sequence. We prove that many common motivic spectra satisfy this condition, and we find an example which does not. From now on, we will implicitly $p$-complete everywhere. 

\subsection{Algebraic slice spectral sequences}\label{SS:AlgSSS}

The type $n$ effective slice tower \eqref{Eqn:EffTower} is built out of cofiber sequences
$$f_{q+1}^nE \to f_q^n E \to s_q^n E.$$
Applying mod $p$ motivic homology induces a long exact sequence
\begin{equation}\label{Eqn:HLES}
\cdots \to H_{i,j}f_{q+1}^nE \xrightarrow{ } H_{i,j}f_q^n E \xrightarrow{ } H_{i,j} s_q^n E \xrightarrow{} H_{i-1,j} f_{q+1}^n E \to \cdots
\end{equation}
for each $j \in \z$. 

\begin{defin}\label{Def:AlgEff}
We say that $E \in \SH(k)$ is \emph{algebraically $n$-sliceable over $k$} if the long exact sequences \eqref{Eqn:HLES} split into short exact sequences of $A^\vee$-comodules
\begin{equation}\label{Eqn:HSES}
0 \xrightarrow{ } H_{**}f_q^nE \xrightarrow{p} H_{**}s_q^nE \xrightarrow{\iota}H_{*-1,*}(f_{q+1}^nE)\to 0
\end{equation}
for each $q \in \z$. 

Similarly, we say that $E$ is \emph{algebraically sliceable over $k$} if the long exact sequences associated to the effective slice tower split into short exact sequences of $A^\vee$-comodules
\begin{equation}\label{Eqn:HSES}
0 \xrightarrow{ } H_{**}f_q E \xrightarrow{p} H_{**}s_q E \xrightarrow{\iota}H_{*-1,*}(f_{q+1} E)\to 0
\end{equation}
for each $q \in \z$. 

When the field $k$ is clear, we will omit the phrase ``over $k$" for brevity.

\end{defin}

\begin{rem2}
If $p=2$, being algebraically sliceable is the same as being algebraically $1$-sliceable. 
\end{rem2}

%

Applying the functor $\Ext_{A^\vee}^{***}(\m_p^k, -)$ to \eqref{Eqn:HSES} gives rise to long exact sequences which assemble into the unrolled exact couple
\begin{equation}\label{Eqn:Unrolled}
\begin{tikzcd}
	\cdots & \Ext^{s,t,u}(H_{**}f_0^nE)\arrow[l,swap,"\delta"] \arrow[d,"p_*"]& \Ext^{s-1, t-1,u}(H_{**}f_1^nE)\arrow[d,"p_*"]\arrow[l,swap,"\delta"] & \Ext^{s-2, t-2, u}(H_{**}f_2^nE)\arrow[d,"p_*"]\arrow[l,swap,"\delta"] & \arrow[l,swap,"\delta"]\cdots\\
	\mbox{}\arrow[ur, dotted,"\iota_*"]& \Ext^{s,t,u}(H_{**}s_0^nE)\arrow[ur, dotted,"\iota_*"] & \Ext^{s-1,t-1,u}(H_{**}s_1^nE)\arrow[ur,dotted,"\iota_*"] & \Ext^{s-2,t-2,u}(H_{**}s_2^nE)\arrow[ur, dotted,"\iota_*"] & 
\end{tikzcd}
\end{equation}
in which the dotted arrows have tridegree $(0,-1,0)$. This gives rise to the \emph{algebraic type $n$ effective slice spectral sequence}
\begin{equation}
{^{n,\aESSS}E}^{q,s,t,u}_1(E) = \Ext_{A^\vee}^{s,t,u}( \m^k_p, H_{**}s_q^nE) 
\end{equation}
with differentials 
\[
	d_r: E_r^{q,s,t,u}\to E_r^{q+r,s-r+1, t-r, u}.
\] 

\begin{prop}
The spectral sequence \eqref{Eqn:AlgSSS} converges conditionally to the Ext groups of $H_{**}(E)$:
\begin{equation}\label{Eqn:AlgSSS}
{^{n,\aESSS}E}^{q,s,t,u}_1(E)\implies \Ext^{s+q,t+q,u}(H_{**}E).
\end{equation}
Moreover, if $E$ is effective, i.e. $E = f_0E$, and the spectral sequence collapses at a finite stage, then the spectral sequence converges strongly. 
\end{prop}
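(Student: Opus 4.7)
The plan is to realize this as the standard spectral sequence associated to the unrolled exact couple \eqref{Eqn:Unrolled} and then to invoke Boardman's conditional convergence machinery. First, I would note that the $D$-terms $D_1^{q,s,t,u} := \Ext^{s,t,u}(H_{**}f_q^nE)$ form a tower via the maps $\delta$ induced by the slice cover maps $f_{q+1}^nE\to f_q^nE$, and that the existence of the spectral sequence with $E_1$-page $\Ext^{s,t,u}(H_{**}s_q^nE)$ and differentials of the stated tridegrees is the formal output of the exact couple construction applied to \eqref{Eqn:Unrolled}.

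Next, I would identify the filtered abutment. Define a descending filtration
$$F^q\Ext^{*,*,*}(H_{**}E) := \operatorname{im}\bigl(\Ext^{*,*,*}(H_{**}f_q^nE) \to \Ext^{*,*,*}(H_{**}E)\bigr)$$
induced by the structure map $f_q^nE \to E$. For $E$ effective we have $f_q^nE = E$ whenever $q \leq 0$, so $F^q\Ext(H_{**}E) = \Ext(H_{**}E)$ for every $q \leq 0$, and the filtration is exhaustive. Conditional convergence to $\Ext(H_{**}E)$ equipped with this filtration is then a direct application of Boardman's theory of exact couple spectral sequences; the relevant colimit identifies $\Ext(H_{**}E)$ as the left end of the $D$-tower once the tower has stabilized.

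For strong convergence under the collapse hypothesis, I would observe that if the spectral sequence collapses at a finite page $N$ then $E_r \cong E_\infty$ for all $r \geq N$, so in particular $\lim^1_r E_r = 0$. Boardman's criterion then upgrades conditional convergence to strong convergence: effectiveness of $E$ gives exhaustiveness of the filtration, while the finite-stage collapse together with the vanishing $\lim^1_r E_r$ ensures that the filtration on each tridegree is Hausdorff.

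The main obstacle I would expect is the careful tridegree bookkeeping needed to translate between Boardman's singly-indexed exact couple framework and our tri-indexed setup, together with verifying that the natural map $\colim_q D_1^{q} \to \Ext(H_{**}E)$ is an isomorphism in the effective case. Once those points are pinned down, both the conditional convergence statement and its strong refinement follow from standard exact couple machinery.
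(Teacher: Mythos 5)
Your proposal has a genuine gap at the heart of the conditional convergence claim. Conditional convergence to the colimit in Boardman's sense is not a formal consequence of having an unrolled exact couple: by definition it is the assertion that, in each fixed tridegree $(s,t,u)$, both $\lim_k \Ext^{s-k,t-k,u}(H_{**}f_k^nE)$ and ${\lim_k}^1 \Ext^{s-k,t-k,u}(H_{**}f_k^nE)$ vanish, the limits being taken along the connecting maps $\delta$ of the exact couple in the direction $k \to +\infty$. You never verify this; instead you discuss exhaustiveness of an image filtration at the other end of the tower ($q \leq 0$ in the effective case), which is not the condition at issue, and you describe the convergence as a ``direct application'' of Boardman's machinery. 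The actual content is a vanishing-line argument: since $\Ext$ vanishes in negative cohomological degree, $\Ext^{s-k,t-k,u}(H_{**}f_k^nE)=0$ for $k>s$, so the inverse limit vanishes; and $\Ext^{s-k,t-k,u}(H_{**}s_k^nE)=0$ for $k>s$ forces the map $p_*$ to be zero and hence $\delta$ to be surjective in that range, so the tower is Mittag-Leffler and the $\lim^1$ term vanishes. Without this step the first assertion of the proposition (which is stated for all algebraically $n$-sliceable $E$, not only effective ones) is unproved.

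Your strong convergence step is closer, but effectiveness is playing the wrong role in your sketch. Its purpose is not merely exhaustiveness: it places the spectral sequence, in each fixed weight, in the class of half-plane spectral sequences with entering differentials, which is the hypothesis under which Boardman's theorem upgrades conditional convergence plus $RE_\infty=0$ to strong convergence. Your observation that collapse at a finite page gives $RE_\infty=0$ is correct, so this half is easily repaired once conditional convergence has actually been established; the $\lim$/$\lim^1$ verification above is the essential missing content.
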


\begin{proof}
We will verify that the spectral sequence \eqref{Eqn:AlgSSS} converges conditionally to the colimit $\Ext^{***}(H_{**}(E))$ in the sense of Boardman \cite[Def. 5.10]{Boa98}: for each fixed tridegree $(s,t,u)$, we will show that
\begin{equation}\label{Eqn:lim}
\lim_k \Ext^{s-k,t-k,u}(H_{**}f_k^n E) = 0,
\end{equation}
\begin{equation}\label{Eqn:lim1}
{\lim_k}^1 \Ext^{s-k,t-k,u}(H_{**}f_k^n E) = 0.
\end{equation}
The limits above are taken along the maps $\delta$ in the unrolled exact couple \label{Eqn:Unrolled}.

Fix a tridegree $(s,t,u)$. Observe that 
$$\Ext^{s-k,t-k,u}(H_{**}f_k^n E) = 0$$
whenever $k > s$, so \eqref{Eqn:lim} is clear. For \eqref{Eqn:lim1}, note that 
$$\Ext^{s-k,t-k,u}(H_{**}s_k^n E) = 0$$
for all $k>s$, so
$$p_* : \Ext^{s-k, t-k, u}(H_{**}f_k^n E) \to \Ext^{s-k,t-k,u}(H_{**}s_k^n E)$$
is zero for all $k > s$ and thus
$$\delta : \Ext^{s-(k+1),t-(k+1),u}(H_{**} f_{k+1}^n E) \to \Ext^{s-k,t-k,u}(H_{**}f_k^n E)$$
is surjective for all $k > s$. Therefore the inverse system $(\Ext^{s-k,t-k,u}(H_{**}f_k^nE), \delta)$ is Mittag-Leffler, cf. \cite[Pg. 9]{Boa98}, and the equality \eqref{Eqn:lim1} holds. 

We have now shown that \eqref{Eqn:AlgSSS} converges conditionally to the colimit. We now justify strong convergence under the hypotheses that $E$ is effective and the spectral sequence collapses at a finite stage. Since $E$ is effective, \eqref{Eqn:AlgSSS} is a half-plane spectral sequence with entering differentials \cite[Pg. 20]{Boa98}.\footnote{More precisely, the restriction to each fixed weight $u$ is a half-plane spectral sequence with entering differentials.}  By \cite[Remark, Pg. 20]{Boa98}, we have $RE_\infty=0$ whenever the spectral sequence collapses at a finite stage. The spectral sequence therefore converges strongly under our hypotheses by \cite[Thm. 7.1]{Boa98}. 
\end{proof}

The same discussion carries over to the type $n$ very effective slice filtration. In particular, we can define when $E \in \SH(k)$ is \emph{very algebraically $n$-sliceable over $k$}, and for such $E$ we may define the \emph{algebraic type $n$ very effective slice spectral sequence}
\begin{equation}\label{Eqn:AlgVSSS}
{^{n,\aVSSS}E}^{q,s,t,u}_1(E) = \Ext_{A^\vee}^{s,t,u}( \m^k_p, H_{**}\tilde{s}_q^nE) \Rightarrow \Ext_{A^\vee}^{s,t,u}( \m^k_p, H_{**}\sc(E)).
\end{equation}
The spectral sequence converges conditionally. If $E$ is very effective and the spectral sequence collapses at a finite stage, the spectral sequence converges strongly.

\subsection{Comparison square}\label{SS:comparisonSquare}

Our main tool for relating the type $n$ effective slice spectral sequence and the motivic Adams spectral sequence is the following theorem, which assembles the spectral sequences \eqref{Eqn:EffSSS}, \eqref{Eqn:mASS}, and \eqref{Eqn:AlgSSS}  discussed above into a square. 

\begin{thm}[Comparison square]\label{Thm:comparisonSquare} 
Let $E \in \SH(k)$ be a $k$-motivic spectrum. If $E$ is algebraically $n$-sliceable, then there is a square of spectral sequences of the form
\begin{equation}\label{Eqn:comparisonSquare}
\begin{tikzcd}
& \bigoplus_{q \geq 0} \Ext^{s,f,u}_{A}(H_{**}s_q^n E) \arrow[dl,Rightarrow,"n-aESSS"'] \arrow[dr,Rightarrow,"\bigoplus_{q \geq 0} mASS"] & \\
\Ext^{s+q,f+q,u}_A(H_{**}E) \arrow[dr,Rightarrow,"mASS"'] & & \bigoplus_{q \geq 0} \pi_{s,u}s_q^n E \arrow[dl,Rightarrow,"n-ESSS"] \\
& \pi_{s+q,u}E. 
\end{tikzcd}
\end{equation}
\end{thm}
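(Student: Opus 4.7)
The proof is essentially a packaging of constructions already established in Sections \ref{Sec:Background} and \ref{SS:AlgSSS}, together with the Ext long exact sequences induced by the algebraic $n$-sliceability hypothesis. I would begin by observing that three of the four spectral sequences in \eqref{Eqn:comparisonSquare} require no hypothesis beyond the existence of $E$ as a motivic spectrum. First, the motivic Adams spectral sequence for $E$ (bottom-left edge) is \eqref{Eqn:mASS} applied to $X = E$, with $E_2$-page $\Ext^{s+q, f+q, u}_{A^\vee}(\m^k_p, H_{**}E)$ converging to $\pi_{s,u}(E^\wedge_H)$ after the obvious re-indexing. Second, the type $n$ effective slice spectral sequence (bottom-right edge) is constructed from the tower \eqref{Eqn:EffTower}, yielding \eqref{Eqn:EffSSS} with $E_1$-page $\bigoplus_{q \geq 0} \pi_{s,u} s_q^n E$ converging to $\pi_{s+q, u} \sc(E)$, which coincides with $\pi_{s+q, u} E$ for all examples we will subsequently consider by \cite[Thm. 8.12]{Hoy15}. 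Third, the direct sum of motivic Adams spectral sequences on the top-right edge is obtained by applying \eqref{Eqn:mASS} individually to each slice $s_q^n E$ and summing over $q \geq 0$.

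Next, I would handle the one edge that actually requires the algebraic $n$-sliceability hypothesis, namely the algebraic type $n$ effective slice spectral sequence on the top-left. By Definition \ref{Def:AlgEff}, the hypothesis is precisely what splits each long exact sequence \eqref{Eqn:HLES} associated to the cofiber sequence $f_{q+1}^n E \to f_q^n E \to s_q^n E$ into a short exact sequence \eqref{Eqn:HSES} of $A^\vee$-comodules. Applying $\Ext^{***}_{A^\vee}(\m^k_p, -)$ to each such short exact sequence produces a long exact sequence in $\Ext$, and these assemble into the unrolled exact couple \eqref{Eqn:Unrolled}. The associated spectral sequence is \eqref{Eqn:AlgSSS}, which converges conditionally to $\Ext^{s+q, f+q, u}_{A^\vee}(\m^k_p, H_{**}E)$ by the preceding proposition.

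To finish, I would match up the $E_1$ and $E_\infty$ pages to confirm that the four spectral sequences interlock as drawn: the top vertex $\bigoplus_{q \geq 0} \Ext^{s,f,u}_A(H_{**}s_q^n E)$ is simultaneously the $E_1$-page of the $n$-aESSS and the $E_2$-page of $\bigoplus_{q \geq 0} \mASS$; the bottom vertex $\pi_{s+q,u}E$ is a common target of the mASS for $E$ and of the $n$-ESSS for $E$; and the two intermediate vertices agree after the indicated re-indexing. The conceptual content of the theorem — and the only subtlety worth flagging — is that, per the remark immediately following Theorem \ref{MainThm:comparisonSquare} in the introduction, this is \emph{not} a commutative diagram in the category of spectral sequences, and I would make no claim that the two diagonals induce identified filtrations on $\pi_{**}E$. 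Consequently there is nothing left to verify about filtration compatibility; the theorem simply asserts existence of all four spectral sequences and their incidence at the indicated tri-degrees, and this follows immediately from what has been assembled above.
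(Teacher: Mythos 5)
Your proposal is correct and matches the paper's approach: the paper gives no separate proof beyond the constructions preceding the theorem, since the mASS, the type $n$ ESSS, and (under the algebraic $n$-sliceability hypothesis) the $n$-aESSS with its conditional convergence proposition are exactly the ingredients, assembled just as you describe. Your closing remark that no commutativity or filtration compatibility is claimed is also precisely the paper's stance.
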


\begin{rem2}\label{Rmk:aVSSS}
The obvious analog holds if we work with the type $n$ very effective slice filtration instead of the type $n$ effective slice filtration. Similarly, we may work with the ordinary (very) effective slice filtration instead of the type $n$ (very) effective slice filtration. 
\end{rem2}

\subsection{Examples}\label{SS:Examples}

We now present some examples of motivic spectra which are algebraically sliceable.

\subsubsection{Zero slices}

We begin with a simple example.

\begin{exm}
If $E \in \SH(k)$ is effective and the canonical map $f_0^n E \to s_0^nE$ is an equivalence, then the type $n$ effective slice tower of $E$ is just an equivalence at the bottom level and zero elsewhere. The only nontrivial long exact sequence in homology then splits into a short exact sequence of the form
$$0 \to H_{**}f_0^n E \xrightarrow{\cong} H_{**}s_0^n E \to 0 \to 0,$$
so $E$ is algebraically $n$-sliceable. This applies, for example, if $E = H\f_p$ or $E = H\z_p$, where $n$ can be any positive integer, and $k$ can be any base field. 
\end{exm}

\subsubsection{Connective algebraic K-theory}

Most spectra of interest are not concentrated in a single type $n$ slice. However, the following example shows that some more complicated spectra are algebraically sliceable. 

\begin{thm}\label{Thm:kglAlg}
Let $p=2$ and let $k$ be any base field of characteristic not two. The effective cover of algebraic K-theory, $kgl$, is algebraically and very algebraically sliceable over $k$.
\end{thm}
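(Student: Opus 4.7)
The plan is to reduce algebraic sliceability of $kgl$ to a single short exact sequence using the Bott-periodic structure of its effective slice tower, verify that sequence via an explicit computation in mod $2$ motivic homology, and then deduce very algebraic sliceability from Remark \ref{Rmk:Coincide}.

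First, I would recall from Levine and Voevodsky that $s_q kgl \simeq \Sigma^{2q,q} H\z$ for $q \geq 0$ (and zero otherwise), and that the Bott map $\beta : \Sigma^{2,1} kgl \to kgl$ identifies $f_q kgl$ with $\Sigma^{2q,q} kgl$ for every $q \geq 0$ (since $\cofib(\beta) \simeq s_0 kgl = H\z$, one has $\Sigma^{2,1}kgl \simeq f_1 kgl$, and iterating gives $f_q kgl \simeq \Sigma^{2q,q}kgl$). Each cofiber sequence $f_{q+1} kgl \to f_q kgl \to s_q kgl$ is therefore a $\Sigma^{2q,q}$-suspension of the single cofiber sequence $\Sigma^{2,1} kgl \xrightarrow{\beta} kgl \to H\z$. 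Consequently, proving algebraic sliceability reduces to verifying that applying $H_{**}(-;\f_2)$ to this single cofiber sequence produces a short exact sequence of $A^\vee$-comodules.

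Second, I would invoke the standard computation, valid when $\chara k \neq 2$, identifying $H_{**}(H\z;\f_2)$ and $H_{**}(kgl;\f_2)$ as sub-$A^\vee$-comodules of $A^\vee$ (a motivic analog of the classical identifications $H_*(H\z;\f_2) \subset A_*$ and $H_*(ku;\f_2) \subset A_*$). Under these identifications, the natural map $H_{**}(kgl) \to H_{**}(H\z)$ is simply the inclusion of subcomodules, which is injective, and the quotient is a free rank-one $H_{**}(kgl)$-module generated by the class $\tau_1$ in bidegree $(3,1)$. Comparing this with the outgoing term $H_{*-1,*}(\Sigma^{2,1}kgl) = H_{*-3,*-1}(kgl)$ in the long exact sequence shows that the two match and that the connecting map must vanish, yielding the short exact sequence
\[
0 \to H_{**}(kgl) \to H_{**}(H\z) \to H_{*-3,*-1}(kgl) \to 0
\]
of $A^\vee$-comodules. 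Shifting by $\Sigma^{2q,q}$ gives the short exact sequences from Definition \ref{Def:AlgEff} for every $q$, establishing algebraic sliceability.

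Finally, at $p=2$ the spectrum $kgl$ is a localized quotient of $MGL$ (indeed $kgl \simeq BPGL\langle 1 \rangle$), so by Remark \ref{Rmk:Coincide} its very effective and effective slice filtrations coincide. The short exact sequences above therefore automatically yield the data required for very algebraic sliceability. The main technical obstacle will be the identification of the mod $2$ motivic homologies of $kgl$ and $H\z$ as sub-$A^\vee$-comodules of $A^\vee$ together with the precise structure of their quotient; once these are in place, the short exact sequence follows formally from the long exact sequence in mod $2$ motivic homology and a bidegree comparison.
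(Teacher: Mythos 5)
Your proposal is correct and follows essentially the same route as the paper: use Bott periodicity to identify $f_q kgl \simeq \Sigma^{2q,q}kgl$ and $s_q kgl \simeq \Sigma^{2q,q}H\z_2$, so every cofiber sequence is a suspension of $\Sigma^{2,1}kgl \xrightarrow{v_1} kgl \to H\z_2$, and then observe that $H_{**}(kgl)\cong (A//E(1))^\vee \hookrightarrow (A//E(0))^\vee \cong H_{**}(H\z_2)$ is an inclusion of $A^\vee$-comodules with cokernel the shifted copy generated by $\bar\tau_1$ (equivalently $H_{**}(v_1)=0$), giving the required short exact sequences, with the very effective case handled by the coincidence of the two filtrations for localized quotients of $MGL$. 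The only quibble is terminological: what must vanish is the map $H_{**}(\Sigma^{2,1}kgl)\to H_{**}(kgl)$ induced by the Bott map, not the connecting homomorphism of the long exact sequence, which is instead surjective onto $H_{*-1,*}(\Sigma^{2,1}kgl)$.
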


\begin{proof}
The effective and very effective slice filtrations of $kgl$ coincide by \cite[Prop. 4.3]{Hea19} since it is a localized quotient of $MGL$, so it suffices to prove that $kgl$ is algebraically sliceable. Recall that for any $E \in \SH(k)$, we have by \cite[Lem. 8]{Bac17} that $\Sigma^{2,1} f_n E\simeq f_{n+1}(\Sigma^{2,1}E)$. Bott periodicity for algebraic K-theory \cite[Sec. 6.2]{Voe98a} implies that $\Sigma^{2,1}KGL \simeq KGL$, so $\Sigma^{2,1} f_n KGL \simeq f_{n+1}KGL$ for all $n \in \z$. Therefore
\begin{equation}
f_n kgl \simeq f_n f_0 KGL \simeq f_n KGL \simeq \Sigma^{2n,n} f_0 KGL \simeq \Sigma^{2n,n} kgl
\end{equation}
for all $n \geq 0$ since $kgl = f_0 KGL$. We also recall that $s_q kgl \simeq \Sigma^{2q,q} H\z_p$ if $q \geq 0$ and $s_q kgl = 0$ if $q < 0$. 

The cofiber sequences of motivic spectra
$$f_q kgl \to s_q kgl \to \Sigma^{1,0} f_{q+1} kgl$$
thus can be rewritten as
$$\Sigma^{2q,q} kgl \to \Sigma^{2q,q}H\z_p \to \Sigma^{2q+3,q+1} kgl.$$
Applying homology gives a short exact sequence
$$0 \to \Sigma^{2q,q}(A//E(1))^\vee \xrightarrow{i} \Sigma^{2q,q}(A//E(0))^\vee \xrightarrow{j} \Sigma^{2q+3,q+1}(A//E(1))^\vee \to 0$$
where $i$ is the evident inclusion of $A^\vee$-comodules and $j$ is the evident projection of $A^\vee$-comodules. This follows from the identification of $f_1 kgl \to f_0 kgl$ with $v_1 : \Sigma^{2,1} kgl \to kgl$, along with the fact that $H_{**}(v_1) = 0$. It follows that $kgl$ is algebraically sliceable. 
\end{proof}

\subsubsection{Connective motivic Morava K-theory}

We now include an example to demonstrate the necessity of the type $n$ effective slice filtration. Throughout this section, let $p$ be a prime and $k$ a perfect field of exponential characteristic prime to $p$. 

\begin{prop}\label{Prop:knNot}
Let $k(n)$ denote the $n$-th connective motivic Morava K-theory \cite[Def. 4.3]{LT15}.\footnote{The definition of $k(n)$ given by Levine--Tripathi in \cite{LT15} as a localized quotient of algebraic cobordism works for all base fields with exponential characteristic prime to $p$. This definition generalizes the earlier definition of the auxiliary spectrum $k'(n)$ given by Borghesi over fields with a complex embedding \cite[Sec. 4.2]{Bor03} and certain perfect fields \cite[Sec. 4]{Bor09}.}  If $n>1$, then $k(n)$ is not algebraically or very algebraically sliceable. 
\end{prop}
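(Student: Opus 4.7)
The plan is to exhibit a violation of the short exact sequence condition of Definition \ref{Def:AlgEff} at a ``gap'' index in the slice tower of $k(n)$. First I would recall the slice identification
\[
s_q k(n) \simeq \begin{cases} \Sigma^{2q, q} H\f_p & \text{if } q = j(p^n - 1),\ j \geq 0, \\ 0 & \text{otherwise,} \end{cases}
\]
which reflects the $v_n$-polynomial structure of $k(n)$, with $v_n$ of motivic bidegree $(2(p^n-1), p^n-1)$. From this identification it follows that, for $(j-1)(p^n-1) < q \leq j(p^n-1)$ with $j \geq 1$, the successive slice-cover maps $f_q k(n) \to f_{q-1} k(n)$ have vanishing cofiber and are therefore equivalences; consequently
\[
f_q k(n) \simeq \Sigma^{2j(p^n-1), j(p^n-1)} k(n)
\]
throughout this range. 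In particular, the mod $p$ motivic homology $H_{**} f_q k(n)$ is nonzero in the gap.

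Next I would specialize to a gap index. Since $n > 1$ we have $p^n - 1 \geq 3$, so $q = 1$ satisfies $0 < 1 < p^n - 1$, giving $s_1 k(n) = 0$ while
\[
H_{**} f_1 k(n) \cong \Sigma^{2(p^n-1), p^n-1} H_{**} k(n) \neq 0.
\]
If $k(n)$ were algebraically sliceable, the putative short exact sequence of Definition \ref{Def:AlgEff} at $q = 1$ would read
\[
0 \to H_{**} f_1 k(n) \to H_{**} s_1 k(n) \to H_{*-1, *} f_2 k(n) \to 0,
\]
whose middle term vanishes; exactness would force $H_{**} f_1 k(n) = 0$, contradicting the previous display.

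For the very algebraic case, I would invoke Remark \ref{Rmk:Coincide}: since $k(n)$ is a localized quotient of $MGL$, its effective and very effective slice filtrations coincide, so the identical gap argument also rules out very algebraic sliceability. The one nontrivial input is the slice identification of $f_q k(n)$ in the gap range; this can be cited from \cite{LT15}, or derived by adapting the proof of Theorem \ref{Thm:kglAlg} using the cofiber sequence $\Sigma^{2(p^n-1), p^n-1} k(n) \xrightarrow{v_n} k(n) \to H\f_p$ together with Bachmann's lemma \cite[Lem. 8]{Bac17}. The main conceptual obstacle is this slice identification; once available, the contradiction is immediate from the definition.
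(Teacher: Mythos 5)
Your argument is correct and follows essentially the same route as the paper: both use the Levine--Tripathi slice computation to see that $s_1 k(n) = 0$ for $n>1$ while $H_{**}f_1 k(n) \neq 0$ (the paper via $f_1 k(n) \simeq f_2 k(n)$, you via the periodicity identification $f_1 k(n) \simeq \Sigma^{2(p^n-1),p^n-1} k(n)$), so the putative short exact sequence at $q=1$ collapses to a contradiction, and both reduce the very effective statement to the effective one via Heard's coincidence of the two filtrations for localized quotients of $MGL$. The only blemish is a harmless off-by-one: the map $f_q k(n) \to f_{q-1} k(n)$ has cofiber $s_{q-1}k(n)$, so it is an equivalence only when $q-1$ is \emph{not} a multiple of $p^n-1$ (your stated range includes the bottom endpoint $q=(j-1)(p^n-1)+1$, where it fails), but the identification of $f_q k(n)$ on that range, and in particular the nonvanishing of $H_{**}f_1 k(n)$ that your contradiction uses, is correct.
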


\begin{proof}
The effective and very effective slice filtrations of $k(n)$ coincide by \cite[Prop. 4.3]{Hea19} since it is a localized quotient of $MGL$, so it suffices to prove that $k(n)$ is not algebraically sliceable. By \cite[Cor. 4.7]{LT15}, we have 
\begin{equation}\label{Eqn:knslice}
	s_q k(n) \simeq \begin{cases}
		\Sigma^{2q,q}H\f_p \quad & \text{ if } q\equiv 0 \mod (p^n-1),\\
		* \quad & \text{ if else.}
	\end{cases}
\end{equation}
In particular, we have $s_1k(n) = 0$ and  $f_1k(n) \simeq f_2k(n)$ whenever $n>1$. Applying homology of cofiber sequence
$$f_1 k(n) \to s_1 k(n) \to \Sigma^{1,0}f_2k(n)$$
therefore yields
$$A^\vee \to 0 \to \Sigma^{1,0} A^\vee$$
which cannot be a short exact sequence since $A^\vee$ is bounded below in each fixed motivic weight. Thus $k(n)$ is not algebraically sliceable.  
\end{proof}

Although $k(n)$ is not algebraically sliceable if $n>1$, the arguments showing $kgl$ is algebraically sliceable extend to show that $k(1)$ is algebraically sliceable: the key point is that $s_qk(1) \neq *$ for all $q \geq 0$, and thus we avoid the issues in the previous proof. More generally, the type $n$ effective slice filtration allows us to avoid these trivial slices:

\begin{thm}\label{Thm:knAlg}
Let $n$ be a positive integer. The $n$-th connective motivic Morava K-theory, $k(n)$, is algebraically and very algebraically $n$-sliceable. 
\end{thm}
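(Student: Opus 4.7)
The plan is to mimic the structure of the proof of Theorem~\ref{Thm:kglAlg} for $kgl$, with the type $n$ effective slice filtration stepping over precisely the zero ordinary slices of $k(n)$ that obstructed algebraic sliceability in Proposition~\ref{Prop:knNot}. Since $k(n)$ is a localized quotient of $MGL$, Remark~\ref{Rmk:Coincide} ensures that its effective and very effective slice filtrations agree, so it suffices to treat the algebraic (effective) case.

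First I would use formula~\eqref{Eqn:knslice}: the ordinary slices $s_q k(n)$ vanish unless $(p^n-1) \mid q$, in which case they equal $\Sigma^{2q,q} H\f_p$. Consequently, along the stretch from $q(p^n-1)$ to $(q+1)(p^n-1)$ all the effective covers coincide, and so
$$s_q^n k(n) \;\simeq\; s_{q(p^n-1)} k(n) \;\simeq\; \Sigma^{2q(p^n-1),\, q(p^n-1)} H\f_p$$
for $q \geq 0$.

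Next I would obtain a Bott-style identification of the type $n$ covers themselves. Iterating Bachmann's shift lemma $\Sigma^{2,1} f_q E \simeq f_{q+1}(\Sigma^{2,1} E)$ \cite[Lem. 8]{Bac17} a total of $q(p^n-1)$ times, and invoking the periodicity equivalence $\Sigma^{2(p^n-1),\, p^n-1} K(n) \simeq K(n)$ on the periodic motivic Morava K-theory, yields $f_q^n k(n) \simeq \Sigma^{2q(p^n-1),\, q(p^n-1)} k(n)$ for $q \geq 0$. Under these identifications, the map $f_{q+1}^n k(n) \to f_q^n k(n)$ in the type $n$ slice tower becomes (a suspension of) multiplication by $v_n : \Sigma^{2(p^n-1), p^n-1} k(n) \to k(n)$.

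To finish, I would show that $v_n$ acts trivially on $H \wedge k(n)$: because the unit $k(n) \to H$ annihilates the class $v_n \in \pi_{**} k(n)$, multiplication by $v_n$ on the $H$-module spectrum $H \wedge k(n)$ vanishes, whence $H_{**}(v_n) = 0$. This forces each long exact sequence \eqref{Eqn:HLES} to split into short exact sequences of $A^\vee$-comodules, verifying Definition~\ref{Def:AlgEff}. The main obstacle I anticipate is the clean identification of the attaching maps in the type $n$ slice tower with $v_n$-multiplication, together with the verification that $H_{**}(v_n) = 0$; both should follow from the Levine--Tripathi construction of $k(n)$ as a quotient of the effective cover of algebraic cobordism by a regular sequence containing $v_n$.
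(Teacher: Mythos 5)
Your proposal is correct and follows essentially the same route as the paper: reduce to algebraic $n$-sliceability (effective $=$ very effective filtrations for localized quotients of $MGL$), identify $s_q^n k(n)$ and $f_q^n k(n)$ via \eqref{Eqn:knslice} and periodicity so that the transition maps become suspensions of $v_n$, and split the long exact sequences because $H_{**}(v_n)=0$, exactly mirroring the $kgl$ argument. The only place to tighten is your justification that $H_{**}(v_n)=0$: that $v_n$ dies in $\pi_{**}H$ gives this only after observing that $H_{**}(v_n)$ is multiplication by the Hurewicz image of $v_n$ and that $H_{**}k(n)\cong (A//E(Q_n))^\vee \to A^\vee$ is injective (supplied by \cite[Lem. 6.10]{Hoy15}, which the paper cites), a point the paper itself also leaves largely implicit.
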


\begin{proof}
It suffices to prove $k(n)$ is algebraically $n$-sliceable. The slices of $k(n)$ are given by Equation \eqref{Eqn:knslice}, so we have
$$s_q^n k(n) \simeq \Sigma^{q\cdot (2p^{n}-2), q \cdot (p^{n}-1)} H\f_p$$
for all $q \geq 0$. We also observe that by periodicity of $k(n)$, we have 
$$f_{q+1}^nk(n) \simeq \Sigma^{2p^{n}-2,(p^{n}-1)} f_q^n k(n) \simeq \cdots \simeq \Sigma^{(q+1)(2p^{n}-2),(q+1)(p^{n}-1)} k(n).$$

By \cite[Lem. 6.10]{Hoy15}, the cohomology of $k(n)$ is $A//E(Q_n)$, where $E(Q_n)$ is the sub-Hopf algebroid of motivic Steenrod algebra generated by the motivic Milnor primitive $Q_n$.
We are now in a situation mirroring that of Theorem \ref{Thm:kglAlg}. Just like in that proof, the long exact sequences in homology associated to the cofiber sequences
$$f_{q+1}^n k(n) \to f_q^n k(n) \to s_q^n k(n)$$
split into short exact sequences of $A^\vee$-comodules of the form
\begin{align*}
0 \to \Sigma^{q(2p^{n}-2),q(p^{n}-1)}(A//E(Q_n))^\vee &\xrightarrow{i} \Sigma^{q(2p^{n}-2),q(p^{n}-1)} A^\vee \\
&\xrightarrow{j} \Sigma^{(q+1)(2p^{n}-2)+1,(q+1)(p^{n}-1)} (A//E(Q_n))^\vee \to 0
\end{align*}
since $H_{**}(v_n) = 0$. Therefore $k(n)$ is algebraically $n$-sliceable. 
\end{proof}

\subsubsection{Truncated motivic Brown--Peterson spectra}

Like $k(n)$, the truncated motivic Brown--Peterson spectrum $BPGL\langle m \rangle$ is not algebraically $1$-sliceable over certain base fields when $m>1$, but it is algebraically $m$-sliceable over certain base fields. The proof is more subtle in this case, though.

\begin{lem}
\label{Lem:lemma}
	Consider a tower of fibrations
	$$
	\begin{tikzcd}
&\vdots \ar[d] & \\
&f_2E\ar[d]\ar[r] &s_2E\\
&f_1E\ar[d]\ar[r] &s_1E\\
E&f_0E \ar[l,"\simeq"']\ar[r] & s_0E \\
\end{tikzcd}$$
where $f_qE \to f_{q-1} E\to s_{q-1}E$ is a fibration for each $q\geq 1$. If $H_*(f_qE\to f_{q-1}E)$ is zero for each $q\geq 1$, then the associated spectral sequence
$$E_1^{**}=H_*(s_*E)\implies H_*E$$ collapses at the $E_2$-page.
\end{lem}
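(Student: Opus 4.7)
The plan is to directly compute the $E_2$-page and observe that it is concentrated in a single column, from which collapse is immediate.

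First I would translate the hypothesis into short exact sequences. For each $q \geq 0$, the cofiber sequence $f_{q+1}E \to f_q E \to s_q E$ gives a long exact sequence in homology, and since $H_*(f_{q+1}E) \to H_*(f_q E)$ is zero by assumption, this splits into a short exact sequence
$$0 \to H_*(f_q E) \xrightarrow{\,p_q\,} H_*(s_q E) \xrightarrow{\,\delta_q\,} H_{*-1}(f_{q+1} E) \to 0.$$
In particular, $p_q$ is injective and $\delta_q$ is surjective for every $q \geq 0$.

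Next I would identify $d_1$ explicitly. By the standard construction of the spectral sequence of a tower of fibrations, the differential $d_1 \colon H_*(s_q E) \to H_{*-1}(s_{q+1} E)$ factors as the composite $p_{q+1} \circ \delta_q$. Combined with the short exact sequences above, this gives that the kernel of $d_1$ at filtration $q$ equals the image of $p_q$, a copy of $H_*(f_q E)$ inside $H_*(s_q E)$. For $q \geq 1$, the image of the incoming $d_1$ also equals the image of $p_q$: indeed, the incoming $d_1$ factors as the composite $H_{*+1}(s_{q-1} E) \xrightarrow{\delta_{q-1}} H_*(f_q E) \xrightarrow{p_q} H_*(s_q E)$, and $\delta_{q-1}$ is surjective. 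Hence $E_2^{*,q} = 0$ for all $q \geq 1$, while for $q = 0$ there is no incoming differential and $E_2^{*,0} \cong H_*(f_0 E) \cong H_* E$.

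Because the $E_2$-page is concentrated in the single column $q = 0$, every higher differential $d_r$ with $r \geq 2$ has either its source or its target equal to $0$, so the spectral sequence collapses at the $E_2$-page.

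The step I expect to require the most care is the identification of $d_1$ as the composite $p_{q+1}\circ\delta_q$; once this is in place, the remainder is bookkeeping with the short exact sequences derived from the hypothesis. As a bonus, the computation also shows $E_\infty = H_* E$ concentrated in filtration $0$, which is consistent with the claimed abutment.
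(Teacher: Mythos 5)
Your proposal is correct and takes essentially the same approach as the paper: both exploit the short exact sequences $0 \to H_*f_qE \to H_*s_qE \to H_{*-1}f_{q+1}E \to 0$ obtained from the vanishing hypothesis, together with the factorization of $d_1$ as the connecting map followed by the injection, to see that every $d_1$-cycle in positive filtration is a $d_1$-boundary while every $d_1$-cycle in filtration zero lifts to $H_*f_0E \cong H_*E$. Your explicit computation that $E_2$ is concentrated in the column $q=0$ (and equals $H_*E$ there) is just a repackaging of the paper's cycle-by-cycle permanence argument, and it also recovers the observation recorded in Remark \ref{Rmk:remark}.
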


\begin{proof}
	Take any class $x\in H_*s_qE$ with $d_1(x)=0$. We show that it is a permanent cycle, i.e. $d_i(x)=0$ for any $i \geq 1.$
	
	Consider the sequence of maps
	$$H_{*}f_qE\hookrightarrow H_{*}s_qE \xrightarrow{\delta_q} H_{*-1}f_{q+1}E\hookrightarrow H_{*-1}s_{q+1}E,$$
	where the first and last arrows are inclusion by assumption. Therefore $d_1(x)=0$ implies that $\delta_q(x)=0.$ Hence we have that the class $x$ lifts to $H_{*}f_qE$. When  $q=0,$ the class $x$ lifts to $H_{*}f_0E$ and is therefore a permanent cycle. 
	When $q\geq 1$, since the connecting homomorphism $H_{*+1}s_{q-1}E\to H_{*}f_qE$ is surjective, $x$ is the target of a $d_1$ differential. In particular, it is a permanent cycle.
\end{proof}
\begin{rem2}\label{Rmk:remark}
	From the proof of Lemma \ref{Lem:lemma}, we can see that when $H_*(f_qE\to f_{q-1}E)$ is zero for each $q\geq 1$, the elements in $H_*E$ are all detected by the classes in filtration zero. For higher filtrations $s_qE$, the classes either support a differential (i.e. have nonzero image under the connecting homomorphism), or are hit by a $d_1$ differential (i.e. lift to $f_qE$). 
\end{rem2}

With some extra assumptions, the converse of Lemma \ref{Lem:lemma} is true.

\begin{lem}
\label{Lem:converselemma}
	We use the same notation as in Lemma \ref{Lem:lemma}.
	The map $H_*(f_qE\to f_{q-1}E)$ is zero for each $q\geq 1$
if the tower satisfies the following conditions:
\begin{enumerate}
	\item the map $H_*(f_1E\to f_0E)$ is zero;
	\item the associated spectral sequence collapses at $E_2$-page;
	\item $\displaystyle\varprojlim_n f_nE=*. $
\end{enumerate}
\end{lem}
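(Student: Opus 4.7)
The plan is to induct on $q$, with base case $q=1$ handed to us by hypothesis (1). For the inductive step, assume $H_*(f_k E \to f_{k-1} E) = 0$ for all $1 \le k \le q$, and aim to prove $H_*(f_{q+1} E \to f_q E) = 0$. By the long exact sequence associated to $f_{q+1} E \to f_q E \to s_q E$, this is equivalent to showing that $j_q \colon H_* f_q E \to H_* s_q E$ is injective. Fix therefore $y \in H_n f_q E$ with $j_q(y) = 0$; the goal is to prove $y = 0$.

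The first step is to deduce a structural consequence of the inductive hypothesis. Iterating the vanishing of the tower maps on homology shows that the composite $H_* f_k E \to H_* f_0 E = H_* E$ is zero for every $k \ge q$. Writing $F_k := \operatorname{im}(H_* f_k E \to H_* E)$ for the induced filtration on $H_* E$, this gives $F_k = 0$ for $k \ge q$, and hence $E_\infty^k = F_k / F_{k+1} = 0$ for all $k \ge q$. Combined with hypothesis (2), this forces $\ker(d_1^k) = \operatorname{im}(d_1^{k-1})$ for every $k \ge q$.

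The heart of the argument is a lifting construction. I build inductively a family $y_k \in H_n f_k E$ for $k \ge q$ satisfying $y_q = y$, $j_k(y_k) = 0$, and $y_k$ is the image of $y_{k+1}$ under the tower map. Given $y_k$, exactness produces $z \in H_n f_{k+1} E$ mapping to $y_k$. Since $\delta_{k+1} \circ j_{k+1} = 0$, the class $j_{k+1}(z)$ lies in $\ker(d_1^{k+1}) = \operatorname{im}(d_1^k)$, so $j_{k+1}(z) = j_{k+1}(\delta_k(w))$ for some $w \in H_{n+1} s_k E$. Then $y_{k+1} := z - \delta_k(w)$ satisfies $j_{k+1}(y_{k+1}) = 0$ and still maps to $y_k$ under the tower map, since $\delta_k(w)$ has vanishing image there by exactness.

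Finally, extend the family by $y_k := 0$ for $k < q$; this is compatible because $y$ maps to zero in $H_n f_{q-1} E$ by the inductive hypothesis. The family $\{y_k\}_{k \ge 0}$ defines an element of $\varprojlim_k H_n f_k E$, which vanishes by the Milnor short exact sequence applied to hypothesis (3). Hence $y = y_q = 0$, as required. The main obstacle is orchestrating the lifting step: one must modify $z$ by $\delta_k(w)$ to annihilate $j_{k+1}(z)$, which is available precisely because the inductive hypothesis together with $E_2$-collapse forces $j_{k+1}(z)$ to be a $d_1$-boundary.
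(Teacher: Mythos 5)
Your proof is correct and follows essentially the same route as the paper's: both argue by induction and show that a class in $H_*(f_qE)$ killed by $j_q$ lifts compatibly all the way up the tower---the paper by taking a preimage under the (inductively surjective) connecting map and noting it is a permanent cycle by collapse, you by first deducing $E_2^k=0$ for $k\geq q$ and running the same zig-zag correction---and then both conclude by killing the resulting element of $\varprojlim_k H_*(f_kE)$ using hypothesis (3). The only caveat, which your write-up shares with the paper's, is that the identification $E_\infty^k\cong F_k/F_{k+1}$ (behind your claim $E_2^k=0$ for $k\geq q$) and the vanishing of $\varprojlim_k H_*(f_kE)$ are convergence statements not literally contained in hypothesis (3)---Milnor's sequence for the tower $\{f_kE\}$ controls homotopy, not $H$-homology, so one must commute $H\wedge(-)$ with the homotopy limit---but the paper's proof makes the same implicit appeal, justified by the connectivity present in its applications.
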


\begin{proof}
We prove the statement by induction. Assume the statement is true for $q\leq n+1.$ We prove that $H_*(f_{n+2}E\to f_{n+1}E)$ is zero, or equivalently, $H_*(j_{n+1}: f_{n+1}E\to s_{n+1}E)$ is injective.

We show this by contradiction. Suppose there is a nonzero class $0\neq x$ in $H_{*}f_{n+1}E$ such that $j_{n+1}(x)=0.$ The connecting homomorphism $\delta_n:H_{*+1}s_nE\to H_{*}f_{n+1}E$ is surjective by inductive hypotheses. We can choose a preimage $y\in \delta_n^{-1}(x).$ It follows that $d_1(y)=j_{n+1}\delta_n(y)=0$. By the collapsing condition, the class $y$ is a permanent cycle. Therefore, the class $x$ lifts to $\displaystyle\varprojlim_n H_*(f_nE)$. 
Since the spectral sequence collapses at a finite page, 
this inverse limit $\displaystyle\varprojlim_n H_*(f_nE)\cong H_*(\displaystyle\varprojlim_n f_nE)\cong 0$. This contradicts the assumption that $x$ is nonzero.
\end{proof}

We now discuss the sliceability of the $m$-th motivic Brown--Peterson spectrum $BPGL\langle m \rangle.$ For the sake of simplicity, we restrict to the case when $p=2$. The odd primary cases work similarly.

\begin{thm}
\label{Thm:BPGLhomology}
Over any base field $k$ of characteristic zero, we have that
	$$H_{**}BPGL\langle m\rangle\cong A//E(m)_{**} \cong \m^k_2[\overline{\tau_{m+1}},\dots,\overline{\xi_1},\dots]/(\overline\tau_i^2 - \tau\overline\xi_{i+1} - \rho\overline\tau_{i+1}, i\geq m+1).$$
	In particular,
	$$H_{**}H\z_p\cong A//E(0)_{**}\cong \m^k_2[\overline{\tau_{1}},\dots,\overline{\xi_1},\dots]/(\overline\tau_i^2 - \tau\overline\xi_{i+1} - \rho\overline\tau_{i+1}, i\geq 1).$$
\end{thm}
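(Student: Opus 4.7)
The proof naturally splits into two parts: identifying $H_{**}BPGL\langle m\rangle$ with $A//E(m)_{**}$, then giving the explicit polynomial presentation. I would address them in turn.

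For the first isomorphism, I would follow the classical Wilson-style computation of $H^*BP\langle m\rangle$. Present $BPGL\langle m\rangle$ as the quotient $MGL$-module $BPGL/(v_{m+1},v_{m+2},\dots)$ and inductively kill $v_{m+1},v_{m+2},\dots$ via the cofiber sequences
\[
\Sigma^{|v_i|}BPGL/(v_{m+1},\dots,v_{i-1}) \xrightarrow{\,v_i\,} BPGL/(v_{m+1},\dots,v_{i-1}) \to BPGL/(v_{m+1},\dots,v_i).
\]
Since each $v_i$ acts as zero on mod-$2$ motivic homology (exactly as exploited in the proofs of Theorems \ref{Thm:kglAlg} and \ref{Thm:knAlg}), the induced long exact sequences split into short exact sequences of $A^\vee$-comodules. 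Passing to the limit identifies $H^{**}BPGL\langle m\rangle$ with the cyclic $A$-module $A//E(m)$, where $E(m)\subset A$ is the sub-Hopf-algebroid generated by the motivic Milnor primitives $Q_0,\dots,Q_m$; compare Hoyois \cite[Lem. 6.10]{Hoy15} for the Morava $K$-theory analogue. Dualizing over $\m^k_2$ then gives $H_{**}BPGL\langle m\rangle\cong A//E(m)_{**}$.

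For the second isomorphism, I would invoke Voevodsky's description of the mod-$2$ motivic dual Steenrod algebra over a characteristic-zero base field, extended to account for the element $\rho$ by Hu--Kriz--Ormsby:
\[
A^\vee \cong \m^k_2[\overline{\tau_0},\overline{\tau_1},\dots,\overline{\xi_1},\overline{\xi_2},\dots]\big/\bigl(\overline{\tau_i}^2-\tau\overline{\xi_{i+1}}-\rho\overline{\tau_{i+1}} : i\geq 0\bigr).
\]
Under the $(A,A^\vee)$-pairing, the surjection $A\twoheadrightarrow A//E(m)$ dualizes to an inclusion of $A^\vee$-subcomodule algebras $A//E(m)_{**}\hookrightarrow A^\vee$, whose image is the $\m^k_2$-subalgebra generated by $\{\overline{\xi_i} : i\geq 1\}\cup\{\overline{\tau_i} : i\geq m+1\}$, since the duals of $Q_0,\dots,Q_m$ are precisely the omitted $\overline{\tau_0},\dots,\overline{\tau_m}$. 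Restricting the defining relations of $A^\vee$ to this subalgebra yields exactly the relations $\overline{\tau_i}^2-\tau\overline{\xi_{i+1}}-\rho\overline{\tau_{i+1}}$ for $i\geq m+1$; the relations with $i\leq m$ involve the excluded $\overline{\tau_i}$ and so do not appear. The final statement about $H\z_p$ is then the special case $m=0$.

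The main obstacle is the first step: confirming the inductive splitting into short exact sequences and the resulting identification with $A//E(m)$. The crucial ingredient is the vanishing of each $v_i$ on mod-$2$ motivic homology, which the paper has already established in analogous settings; together with the characteristic-zero hypothesis (which ensures that Voevodsky's description of $A^\vee$ and the theory of the motivic Milnor primitives apply cleanly), this makes the computation formal. The algebraic bookkeeping of the second step is then immediate from the explicit form of $A^\vee$.
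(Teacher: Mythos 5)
Your proposal is correct in substance but follows a genuinely different route from the paper. The paper's proof is essentially a citation: the first isomorphism $H_{**}BPGL\langle m\rangle\cong A//E(m)_{**}$ is quoted directly from Ormsby \cite[Thm. 3.9]{Orm11}, and the explicit presentation is then obtained by direct computation from the known formulas for the $k$-motivic $A_{**}$, $E(m)$, and the conjugation (\cite{Hil11, HKO17, Kylling2017, OO13}). You instead re-derive the first isomorphism by a Wilson-style induction: starting from $H_{**}BPGL\cong A//E_{**}$ (which you should state explicitly as the base case; it comes from Hoyois' computation of $H_{**}MGL$, cf. the proof of Proposition \ref{Prop:mglNot}) and killing the $v_i$, $i\geq m+1$, using $H_{**}(v_i)=0$ --- the same vanishing exploited in Theorems \ref{Thm:kglAlg} and \ref{Thm:knAlg}. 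What your route buys is a self-contained argument in the spirit of the rest of the section; what the citation buys is avoiding the one step of your sketch that is not purely formal.

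That step is the passage from the short exact sequences to the identification of the colimit. The splittings control the associated graded of $H_{**}BPGL\langle m\rangle$, but ``passing to the limit'' does not by itself identify it as the cyclic $A$-module $A//E(m)$: one must still produce the comparison map $A//E(m)\to H^{**}BPGL\langle m\rangle$ (the Thom class is annihilated by $Q_0,\dots,Q_m$) and check it is an isomorphism by a filtration or counting argument against your short exact sequences --- this is precisely the content of the theorem of Ormsby that the paper cites. Similarly, in your second step the justification ``the image is the subalgebra generated by the $\overline{\xi_i}$ and the $\overline{\tau_i}$ with $i\geq m+1$, since the duals of $Q_0,\dots,Q_m$ are the omitted $\overline{\tau_j}$'' is heuristic; the honest statement is the cotensor-product computation $A//E(m)_{**}=A_{**}\,\square_{E(m)_{**}}\,\m^k_2$, carried out on the conjugate generators, whose $E(m)_{**}$-coaction fixes the $\overline{\xi_k}$ and the $\overline{\tau_k}$ with $k\geq m+1$ but not the $\overline{\tau_k}$ with $k\leq m$. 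One then uses the relation $\overline{\tau_i}^2=\tau\overline{\xi_{i+1}}+\rho\overline{\tau_{i+1}}$ to see that the primitive powers $\overline{\tau_k}^{\,2^{m+1-k}}$, $k\leq m$, which also lie in the cotensor product, are already decomposable in your subalgebra, so that the stated presentation is complete. Both points are fixable with the references the paper uses, so I regard your outline as a legitimate alternative proof rather than a gap, provided you either carry out or cite the identification supplied by Ormsby's theorem.
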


\begin{proof}
	The first isomorphism is \cite[Thm. 3.9]{Orm11}. The second isomorphism can be obtained by direct computation. For example, see \cite{Hil11, HKO17, Kylling2017, OO13} for the formulas of the $k$-motivic $A_{**}$, $E(m)$, and conjugation.
\end{proof}

Take the mod $2$ homology of the type $1$ effective slice tower of $BPGL\langle m\rangle$. By \cite[Cor. 4.7]{LT15}, the $q$-th slice is a wedge sum of suspensions of $H\z_p:$
$$s_q^1(BPGL\langle m\rangle)\simeq \Sigma^{2q,q}H\z_2\otimes\pi_{2q}BP\langle m\rangle.$$

We obtain a homological effective slice spectral sequence:
\begin{equation}\label{Eqn:HVSSS}
E_1^{***}=H_{**}(s^1_*BPGL\langle m\rangle)\cong H_{**}H\z_2[v_1,\dots,v_m]\implies H_{**}BPGL\langle m\rangle.
\end{equation}

For a fixed bidegree $(s,t)$, we have that $H_{s,t}(s^1_qBPGL\langle n \rangle)\simeq 0$ when $s-t<q.$ There are only finitely many slices that can contribute to a degree. On the other hand, recall that we have $\sc(BPGL\langle n \rangle)\simeq BPGL\langle m \rangle$ by \cite[Thm. 8.12]{Hoy15}. Therefore the spectral sequence strongly converges to $H_{**}BPGL\langle m \rangle$.

We will refer to this spectral sequence as the $H_*$-ESSS below. 
We have the following result about this spectral sequence:

\begin{thm}
\label{Thm:BPGLHVSS}
	Let $k$ be either $\q$, $\r$, or $\c$. The differentials in the spectral sequence \eqref{Eqn:HVSSS} are determined by $d_{2^i-1}(\bar\tau_i)=v_i$, $1\leq i\leq m$, and $\m^k_2$-linearity.	
\end{thm}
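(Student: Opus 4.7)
Plan for the proof of Theorem \ref{Thm:BPGLHVSS}:

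The plan is to leverage the fact that both the $E_1$-page and the abutment of \eqref{Eqn:HVSSS} are known explicitly by Theorem \ref{Thm:BPGLhomology}, so the problem reduces to identifying a minimal set of differentials that witnesses the algebraic difference between the two. First I would observe which classes on the $E_1$-page must be permanent cycles: the elements of $\m_2^k$, the classes $\bar\xi_i$ for all $i \geq 1$, and the classes $\bar\tau_j$ for $j \geq m+1$ all appear in $A//E(m)_{**}$, and they are represented in slice $0$ by the natural map $BPGL\langle m\rangle \to H\z_2$ factoring through $f_0 BPGL\langle m\rangle \to s_0 BPGL\langle m\rangle$. Conversely, the classes $\bar\tau_1,\dots,\bar\tau_m$ and $v_1,\dots,v_m$ do not appear in $A//E(m)_{**}$, so they must not survive to $E_\infty$.

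Next I would carry out a bidegree analysis. The class $\bar\tau_i \in E_1^{0,*,*}$ sits in tridegree $(q,s,t) = (0, 2^{i+1}-1, 2^i-1)$, while $v_i \in \pi_{2(2^i-1)}BP\langle m\rangle$ contributes a class in tridegree $(2^i-1, 2^{i+1}-2, 2^i-1)$ of $E_1$ (living in $H_{**}(s_{2^i-1}^1 BPGL\langle m\rangle) = \Sigma^{2(2^i-1),2^i-1}H_{**}H\z_2 \otimes v_i$). Thus a differential of the form $d_r(\bar\tau_i) = v_i$ is only possible when $r = 2^i-1$. Moreover, since $\bar\tau_i$ must die, since it is a generator of the $E_1$-page, and since $1$ and its $\m_2^k$-multiples are permanent cycles, the first nontrivial differential supported by $\bar\tau_i$ must hit a unit multiple of $v_i$; by scaling the class $v_i$ if necessary, we may take the coefficient to be $1$.

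To show that these differentials are actually nonzero I would proceed by induction on $i$, using the sequence of maps $BPGL\langle m\rangle \to BPGL\langle i\rangle \to k(i)$ and naturality of the slice tower. For the base case $i=1$, $BPGL\langle 1\rangle$ is a form of $kgl$ (treated in Theorem \ref{Thm:kglAlg}), and the only available differential on $\bar\tau_1$ is $d_1$, which must land on $v_1$ by convergence. For the inductive step, assuming $d_{2^j-1}(\bar\tau_j) = v_j$ for $j < i$, the classes $\bar\tau_j$ and $v_j$ for $j<i$ are accounted for, leaving $\bar\tau_i$ and $v_i$ as the lowest-filtration surviving classes that must still die. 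Since there is no other class in the required tridegree, convergence forces $d_{2^i-1}(\bar\tau_i) = v_i$. Finally, the Leibniz rule (which holds for the $H_*$-ESSS since it is induced by the multiplicative slice tower of the ring spectrum $BPGL\langle m\rangle$) together with $\m_2^k$-linearity propagates these differentials to all of $E_1$, and a dimension count confirms that the resulting $E_\infty$-page matches the associated graded of $A//E(m)_{**}$ under the slice filtration.

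The main obstacle I anticipate is the inductive step: it is easy to show by bidegree considerations that $d_{2^i-1}(\bar\tau_i) = v_i$ is the only possibility, but proving that this differential is truly nonzero requires care. The cleanest route is to compare the $H_*$-ESSS for $BPGL\langle m\rangle$ with that of the subalgebra quotient and to match the killing of $v_i$ against a known relation in $A//E(m)_{**}$. In particular, one must rule out the possibility that $\bar\tau_i$ is a permanent cycle while $v_i$ dies through a longer differential supported from a higher slice; this is where one invokes Lemma \ref{Lem:lemma} and Remark \ref{Rmk:remark}, whose conclusion is that elements of $H_{**}BPGL\langle m\rangle$ that do not lift to $f_0$ must appear as $d_1$-targets, thereby pinning down the structure of the differentials on generators in filtration zero.
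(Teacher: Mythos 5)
Your proposal is correct and follows essentially the same route as the paper: both use the explicitly known $E_1$-page and abutment, the fact that $\bar\tau_i$ lies in filtration zero and is absent from $A//E(m)_{**}$ so cannot be hit and must support a differential, and a degree comparison (after the earlier differentials, established inductively, have cleared the page) showing a unit multiple of $v_i$ is the only possible target; the paper merely organizes the induction on the truncation $m$ via naturality along $BPGL\langle m\rangle \to BPGL\langle m-1\rangle$ rather than on $i$, and disposes of $k=\q,\c$ by noting their coefficient rings are supported in a subset of the $\r$-motivic bidegrees, so the same degree argument applies. One small correction: your closing appeal to Lemma \ref{Lem:lemma} is both unnecessary and unavailable here---its hypothesis is exactly what fails for $m\geq 2$ (this is how Theorem \ref{Thm:BPNotTypeOneSliceable} is proved)---whereas strong convergence of \eqref{Eqn:HVSSS} already rules out $\bar\tau_i$ surviving, so nothing is lost.
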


\begin{proof}
	We first treat the case $k=\r$. We prove the result by induction.

	When $m=1$, the spectral sequence \eqref{Eqn:HVSSS} takes the form
	$$H_{**}H\z_2[v_1] \Rightarrow H_{**}kgl,$$
where by Theorem \ref{Thm:BPGLhomology}, the abutment is
$$\m^k_2[\overline{\tau_{2}},\dots,\overline{\xi_1},\dots]/(\overline\tau_i^2 - \tau\overline\xi_{i+1} - \rho\overline\tau_{i+1}, i\geq 2),$$
and the $E_1$-page is 
$$\m^k_2[v_1][\overline{\tau_{1}},\dots,\overline{\xi_1},\dots]/(\overline\tau_i^2 - \tau\overline\xi_{i+1} - \rho\overline\tau_{i+1}, i\geq 1).$$
The degree $(s,q,u)$ of the elements and the differentials are as follows:
$$\lvert \bar\tau_i\rvert = (2^{i+1}-1, 0, 2^i-1), ~~ \lvert \bar\xi_i\rvert = (2^{i+1}-2, 0, 2^i-1), ~~\lvert v_i\rvert = (2^{i+1}-2, 2^i-1, 2^i-1), ~~i\geq 1;$$
$$\lvert \rho\rvert = (-1,0,-1), ~~\lvert \tau\rvert = (0,0,-1); ~~ \lvert d_r\rvert=(-1,r,0),~~ r\geq 1$$
By comparing degrees, the only possible target of $\bar\tau_{1}$ is $v_1.$ That determines all the differentials.

Now suppose that the statement holds for $m-1$. Consider the quotient map 
$$BPGL\langle m\rangle\to BPGL\langle m-1\rangle$$
and the induced map between spectral sequences. We have $d_{2^i-1}(\bar\tau_i)=v_i$ for $1\leq i\leq m-1$ by naturality, so the $E_{2^{m-1}}$-page is 
$$\m^k_2[v_m][\overline{\tau_m}, \dots, \overline{\xi_1}, \dots]/(\overline{\tau_i^2} - \tau\overline\xi_{i+1} - \rho\overline\tau_{i+1}; ~i\geq m).$$
By comparing degrees, the generator $\overline{\tau_m}$ supports a $d_{2^{m}-1}$ differential and hits $v_m.$ This completes the proof for $k=\r$.

We now consider the other base fields. The collections of bidegrees where $\m_2^\c$ or $\m_2^\q$ are nonzero are contained in the collection of bidegrees where $\m_2^\r$ is nonzero, c.f. \cite[Sec. 5]{OO13}. Since we only used degree arguments to deduce the differentials above, we obtain the desired result by the same proof. 
\end{proof}

\begin{thm}
\label{Thm:BPNotTypeOneSliceable}
	Let $m \geq 2$ and let $k \subseteq \c$ be a field which admits a complex embedding. The $m$-th motivic Brown--Peterson spectrum $BPGL\langle m \rangle \in \SH(k)$ is not algebraically $1$-sliceable.
\end{thm}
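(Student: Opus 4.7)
The plan is to show that the $H_*$-ESSS of $BPGL\langle m \rangle$ over $k$ does not collapse at $E_2$, and then to invoke the contrapositive of Lemma \ref{Lem:lemma}. If $BPGL\langle m \rangle$ were algebraically $1$-sliceable over $k$, then by Definition \ref{Def:AlgEff} the map $H_{**}(f_{q+1}BPGL\langle m \rangle) \to H_{**}(f_qBPGL\langle m \rangle)$ would vanish for every $q \geq 0$, and Lemma \ref{Lem:lemma} would force the $H_*$-ESSS for $BPGL\langle m \rangle$ to collapse at the $E_2$-page. It therefore suffices to exhibit a single nonzero differential $d_r$ with $r \geq 2$.

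To produce such a differential over an arbitrary $k \subseteq \c$, I would reduce to the case $k = \c$ by base change along a complex embedding $k \hookrightarrow \c$. The induced functor $\SH(k) \to \SH(\c)$ preserves the effective slice filtration and takes $BPGL\langle m \rangle$ over $k$ to $BPGL\langle m \rangle$ over $\c$, so it induces a map of $H_*$-ESSS. The generators $\bar{\tau}_2$ and $v_2$ are defined over $k$ and map to the classes of the same name over $\c$. Since $m \geq 2$, Theorem \ref{Thm:BPGLHVSS} applied over $\c$ gives the nontrivial differential $d_3(\bar{\tau}_2) = v_2$. If $\bar{\tau}_2$ were a permanent cycle in the $k$-motivic $H_*$-ESSS, its image would also be a permanent cycle over $\c$, a contradiction. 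Hence the $k$-motivic spectral sequence does not collapse at $E_2$, and $BPGL\langle m \rangle$ is not algebraically $1$-sliceable over $k$.

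The main technical point in the argument is naturality: we need the base change functor $\SH(k) \to \SH(\c)$ to preserve the effective slice filtration and to send the $k$-motivic classes $\bar{\tau}_2$ and $v_2$ to their $\c$-motivic counterparts. Both properties are standard, but they are the load-bearing ingredient that upgrades Theorem \ref{Thm:BPGLHVSS} from $k = \c$ to arbitrary subfields of $\c$.
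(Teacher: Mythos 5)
Your argument is correct and is essentially the paper's own proof: the $k=\c$ case via Theorem \ref{Thm:BPGLHVSS} together with the contrapositive of Lemma \ref{Lem:lemma}, and then base change along $k \hookrightarrow \c$, using that $\bar{\tau}_i$ and $v_i$ map to the classes of the same name, to rule out collapse at $E_2$ over $k$. One small point (which the paper also leaves implicit): knowing $\bar{\tau}_2$ is not a permanent cycle over $k$ does not by itself preclude collapse at $E_2$, since it could a priori support a $d_1$; this is excluded by naturality, e.g.\ because the relevant $E_1$-group $H_{6,3}(s_1 BPGL\langle m\rangle)\cong \f_2\{\bar{\xi}_1^2 v_1\}$ maps injectively to its $\c$-motivic counterpart, so $d_1^k(\bar{\tau}_2)=0$ and the nontrivial differential it supports indeed has length at least $2$.
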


\begin{proof}

The $k=\c$ case follows from Theorem \ref{Thm:BPGLHVSS} and the contrapositive of Lemma \ref{Lem:lemma}.

More generally, let $k \hookrightarrow \c$ be a field which admits a complex embedding. Base change induces a map between the $H_*$-ESSS in $\SH(k)$ and the $H_*$-ESSS in $\SH(\c)$. The target does not collapse at $E_2$, so it follows from inspection of the map that the source cannot collapse at $E_2$. Indeed, base change sends the generators $\bar{\tau}_i$ and $v_i$ to the generators with the same name. Therefore we may apply Lemma \ref{Lem:lemma} to conclude that $BPGL \langle m \rangle \in \SH(k)$ is not algebraically sliceable.
\end{proof}

If we work with the type $m$ effective slice tower, then in the $H_*$-$m$-ESSS\footnote{The spectral sequence obtained by applying homology to the type $m$ effective slice tower.} for $BPGL \langle m \rangle$, the $q$-th filtration quotient in the $E_1$ term, $H_{**}s^m_q BPGL \langle m \rangle$, is an extension of the type $1$ filtration quotients $H_{**}s^1_i BPGL \langle m \rangle$ with $q(2^m-1) \leq i \leq (q+1)(2^m-1)-1.$ In other words, the homology of the type $m$ slices can be computed using the truncated $H_*$-ESSS. As a result, in the $H_*$-$m$-ESSS of $BPGL \langle m \rangle$, the longest differential has length $1$. 

Although $BPGL \langle m \rangle$, $m \geq 2$, is not algebraically $1$-sliceable, it \emph{is} algebraically $m$-sliceable. We have the following result.
	\begin{thm}
	\label{Thm:BPGLmSLiceable}
	Let $k$ be a field of characteristic zero. The $m$-th motivic Brown--Peterson spectrum $BPGL\langle m \rangle \in \SH(k)$ is algebraically $m$-sliceable. 
	\end{thm}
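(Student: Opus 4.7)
The plan is to apply Lemma~\ref{Lem:converselemma} to the type $m$ effective slice tower of $E := BPGL\langle m\rangle$ in $\SH(k)$. That lemma reduces algebraic $m$-sliceability to three hypotheses: (1) $H_{**}(f_1^m E\to f_0^m E)=0$; (2) the $H_*$-$m$-ESSS of $E$ collapses at $E_2$; and (3) $\lim_n f_n^m E \simeq *$. Condition (3) is immediate from the fact that $f_n^m E \in \Sigma^{2n(2^m-1),n(2^m-1)}\SH^{\eff}(k)$ is increasingly connective in Morel's homotopy $t$-structure, so the real content is in (1) and (2).

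For condition (2), I would invoke Theorem~\ref{Thm:BPGLHVSS}, extended from $\{\q,\r,\c\}$ to an arbitrary characteristic zero base field by the same degree-based argument: the only nonzero differentials in the type 1 $H_*$-ESSS of $BPGL\langle m\rangle$ are $d_{2^i-1}(\bar\tau_i)=v_i$ for $1\le i\le m$, together with their $\m_2^k$-linear and Leibniz consequences. The longest such differential has length $2^m-1$, precisely the block size of the type $m$ coarsening. Consequently, every differential of length $\ell\le 2^m-1$ either stays inside a single type $m$ block---in which case it merely contributes to the extension structure building $H_{**}s_q^m E$ from the homologies of its constituent type 1 slices---or crosses exactly one block boundary and becomes a $d_1$ in the $H_*$-$m$-ESSS. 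In particular, no $d_r$ with $r\ge 2$ survives, yielding the desired $E_2$-collapse.

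For condition (1), note that $f_1^m E = f_{2^m-1}E$. Theorem~\ref{Thm:BPGLHVSS} shows that the $E_\infty$-page of the type 1 $H_*$-ESSS is concentrated in filtration zero, since the surviving generators $\bar\tau_j$ for $j>m$ and $\bar\xi_j$ for $j\ge 1$ all live in $H_{**}s_0 E$; strong convergence, justified by (3), then forces $F_p H_{**}E := \mathrm{im}(H_{**}f_p E \to H_{**}E) = 0$ for every $p\ge 1$, so in particular $\mathrm{im}(H_{**} f_1^m E \to H_{**}E) = F_{2^m-1}H_{**}E = 0$. The main obstacle is the block-level bookkeeping underlying (2)---precisely sorting which differentials in the finer type 1 spectral sequence survive as $d_1$ differentials in the coarser $H_*$-$m$-ESSS versus which are absorbed as extensions within a single type $m$ slice. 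Once this translation is pinned down, Lemma~\ref{Lem:converselemma} delivers algebraic $m$-sliceability immediately.
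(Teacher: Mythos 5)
Your overall strategy is the paper's: you verify the three hypotheses of Lemma \ref{Lem:converselemma} for the type $m$ tower, you obtain the $E_2$-collapse in (2) from the block structure of the type $m$ slices together with the fact that the longest differential $d_{2^m-1}(\bar\tau_m)=v_m$ in the type $1$ $H_*$-ESSS has length exactly one block, and you obtain (1) from the fact that the type $1$ $E_\infty$-page is concentrated in filtration zero (this is how the paper uses Remark \ref{Rmk:remark}). The one genuine difference is how you reach a general field of characteristic zero: you propose to extend Theorem \ref{Thm:BPGLHVSS} to all such $k$ by the same degree argument, whereas the paper proves the case $k=\q$ and then base changes along $\q\to k$, using naturality of the slice filtration to transport the vanishing of $H_{**}(f^m_{q+1}\to f^m_q)$. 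Your route can be made to work: by Voevodsky's theorem $\m_2^k\cong K^M_*(k)/2[\tau]$ for any characteristic-zero $k$, with $K^M_n(k)/2$ in bidegree $(-n,-n)$, so the nonzero bidegrees of $\m_2^k$ are contained in those of $\m_2^\r$, and Theorem \ref{Thm:BPGLhomology} already identifies $H_{**}BPGL\langle m\rangle$ over any such $k$ --- but you should state this containment explicitly, since the paper records it only for $\q$ and $\c$. The base-change argument buys the general case without re-running the spectral sequence over $k$; your version buys a statement of Theorem \ref{Thm:BPGLHVSS} over all characteristic-zero fields.

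The one step that is wrong as written is your justification of hypothesis (3). Membership in $\Sigma^{2n(2^m-1),n(2^m-1)}\SH^{\eff}(k)$ does \emph{not} imply connectivity in Morel's homotopy $t$-structure: $\SH^{\eff}(k)$ is a localizing, hence triangulated, subcategory and is closed under simplicial desuspensions, and the failure of effective covers to be connective is precisely the gap between the effective and very effective filtrations that the paper is careful about (its convergence statements for the ESSS go through the slice completion, not through connectivity). For $E=BPGL\langle m\rangle$ the conclusion is still true, but it requires one of the inputs the paper actually uses: either $\sc(BPGL\langle m\rangle)\simeq BPGL\langle m\rangle$ by \cite[Thm. 8.12]{Hoy15}, which is equivalent to $\lim_q f_q E\simeq *$, or the identification of effective with very effective covers for localized quotients of $MGL$ (Remark \ref{Rmk:Coincide}), after which your connectivity argument becomes valid. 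With that repaired, and with the block-boundary bookkeeping you defer spelled out as in the paragraph preceding the theorem, your proof goes through.
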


\begin{proof}
We first show the case $k=\q$.
We show the three assumptions in Lemma \ref{Lem:converselemma} are satisfied.

\begin{enumerate}
	\item 
	By Remark \ref{Rmk:remark}, we have that the map in the type $m$ tower
	$$BPGL\langle m \rangle \to s^m_0(BPGL\langle m \rangle)$$ 
induces an inclusion on homology. Therefore $H_{**}(f_1BPGL\langle m \rangle\to f_0BPGL\langle m \rangle)$ is zero.
	\item By the discussion above, the longest differential is of length $1$. As a result, the spectral sequence collapses at the $E_2$ page.
	\item Since $\sc(BPGL\langle m \rangle)\simeq BPGL\langle m \rangle$, the inverse limit of the effective slice tower of $BPGL\langle m\rangle$ is contractible. Equivalently, the inverse limit of the type $m$ effective slice tower is contractible.
\end{enumerate}

The result for $k=\q$ follows by applying Lemma \ref{Lem:converselemma}.

If $k$ is any field of characteristic zero, then there exists a field homomorphism $i: \q \to k$. We have
$$i^*(f_{q+1}^m BPGL\langle m \rangle \to f_q^m BPGL\langle m \rangle) \simeq f_{q+1}^mBPGL \langle m \rangle \to f_{q}^m BPGL \langle m \rangle$$
by naturality of the slice filtration, where the left-hand side is $i^*$ applied to part of the type $m$ slice tower over $\q$ and the right-hand side is part of the type $m$ slice tower over $k$. The left-hand side is zero, so the right-hand side is zero and thus $BPGL \langle m \rangle$ is algebraically $m$-sliceable over $k$. 
\end{proof}

\subsubsection{Very effective cover of hermitian K-theory}

We conclude by mentioning an example where the effective and very effective slice filtrations differ. 

\begin{rem2}\label{Rmk:kqsliceable}
Let $p=2$ and let $k$ be a perfect field of characteristic not two. A quadruple speed very effective slice filtration can be defined by setting $\bar{f}_q = \tilde{f}_{4q}$ and $\bar{s}_q = \cofib(\bar{f}_{q+1} \to \bar{f}_q)$. By \cite{Bac17}, the very effective cover of hermitian K-theory $kq$ \cite{ARO17} satisfies $\bar{f}_qkq \simeq \Sigma^{8q,4q}kq$. Observe that $\bar{f}_{q+1}kq \to \bar{f}_qkq$ may be identified with the Bott map $\beta: \Sigma^{8q+8,4q+4} kq \to \Sigma^{8q,4q} kq$ which induces the zero map in homology. It follows (c.f. the proof for $kgl$) that $kq$ is ``algebraically sliceable" with respect to the quadruple speed very effective slice filtration. Unfortunately, we cannot compute $H_{**}(\bar{s}_q kq)$ for any $q\geq 0$, so have been unsuccessful in attempts to apply Theorem \ref{Thm:comparisonSquare} to understanding the VSSS or mASS for $kq$. 

It is also worth noting that the very effective slice filtration of $kq$ does \emph{not} coincide with the effective slice filtration. It seems unlikely that $kq$ is algebraically sliceable with respect to a quadruple speed effective slice filtration. 
\end{rem2}

\subsection{Non-example}\label{SS:NonExamples}

We mention a non-example to indicate the limitations of our techniques. 

\begin{prop}\label{Prop:mglNot}
	The algebraic cobordism spectrum $MGL$ is not algebraically sliceable. 
\end{prop}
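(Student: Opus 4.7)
The plan is to rule out algebraic sliceability at the very first step by showing that the short exact sequence in Definition \ref{Def:AlgEff} at $q=0$ already fails for $MGL$. Since $s_0 MGL \simeq H\z$ by Voevodsky's computation of zeroth slices, the cofiber sequence $f_1 MGL \to MGL \to s_0 MGL$ induces a long exact sequence in $H_{**}(-;\f_p)$. Splitting of this LES into the SES demanded by Definition \ref{Def:AlgEff} is equivalent to injectivity of the induced map $b : H_{**}(MGL;\f_p) \to H_{**}(H\z;\f_p)$, so it suffices to prove that $b$ is not injective.

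I would first establish this over $k = \c$, where a theorem of Dugger--Isaksen identifies the $\c$-motivic mod $p$ homology of a cellular motivic spectrum with the classical mod $p$ homology of its Betti realization, tensored with $\f_p[\tau]$. Under this identification, $b$ becomes the $\f_p[\tau]$-linear extension of the classical Hurewicz map $H_*(MU;\f_p) \to H_*(H\z;\f_p)$, so it suffices to find a kernel classically. A short Poincar\'e series check does the job: at $p=2$ one has $\dim H_4(MU;\f_2) = 2$ (basis $m_1^2, m_2$) but $\dim H_4(H\z;\f_2) = 1$ (spanned by $\bar\xi_1^4$), forcing a one-dimensional kernel; at any odd prime $p$ one has $\dim H_2(MU;\f_p) = 1$ but $\dim H_2(H\z;\f_p) = 0$, so $m_1$ is in the kernel. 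Either way, $b$ is not injective over $\c$, so $MGL$ is not algebraically sliceable over $\c$.

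For an arbitrary base field $k$ admitting a complex embedding, I would invoke the same base change argument as at the end of the proof of Theorem \ref{Thm:BPNotTypeOneSliceable}: base change $\SH(k) \to \SH(\c)$ is compatible with the effective slice filtration (at the $q = 0$ level the identification $s_0 MGL \simeq H\z$ is preserved) and with mod $p$ motivic homology, so if $MGL$ were algebraically sliceable over $k$ the resulting SES would base change to the corresponding SES over $\c$, contradicting the preceding paragraph. The main subtlety is the base change compatibility with the slice cover $f_1$, and this is where I expect the bulk of the technical work to lie; alternatively, one could bypass Betti realization by running the Poincar\'e series check directly in $k$-motivic homology using the known formulas for $H_{**}^k(MGL)$ and $H_{**}^k(H\z)$, since the obstruction exists already in bidegrees (for example $(4,2)$ at $p=2$) where both groups are computable purely motivically.
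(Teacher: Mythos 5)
Your opening reduction is exactly the paper's: since $MGL$ is effective, algebraic sliceability forces the short exact sequence at $q=0$, i.e.\ injectivity of $H_{**}MGL \to H_{**}s_0MGL \simeq H_{**}H\z_p$, so it suffices to exhibit a kernel. Over $\c$ your dimension count is fine (degree $4$ at $p=2$, degree $2$ at odd $p$), though the blanket citation is overstated: it is \emph{not} true that the $\c$-motivic homology of an arbitrary cellular spectrum is the classical homology of its realization tensored with $\f_p[\tau]$ (the cofiber of $\tau$ is a counterexample); what saves you is that $H_{**}^{\c}MGL$ and $H_{**}^{\c}H\z_p$ are individually known to be free over $\f_p[\tau]$ and to reduce to their classical counterparts, which is a statement about these particular spectra, not about cellularity.

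The genuine gap is the passage from $\c$ to a general base. The assertion that ``if $MGL$ were algebraically sliceable over $k$ the resulting SES would base change to the corresponding SES over $\c$'' is not formal: base change on mod $p$ motivic homology is not exact, the comparison map involves $-\otimes_{\m_p^k}\m_p^\c$ together with Tor corrections, and $\m_p^k \to \m_p^\c$ has an enormous kernel (all positive-degree Milnor K-theory), so neither injectivity nor the vanishing of $H_{**}(f_1MGL \to MGL)$ transfers in either direction without extra input. The paper's base-change argument in Theorem \ref{Thm:BPNotTypeOneSliceable} works by tracing explicit generators that are defined over the prime field and map to nonzero classes over $\c$; to run that here you would already need to know $H_{**}^k MGL$ and the Thom map over $k$. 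Your fallback of a ``Poincar\'e series check directly in $k$-motivic homology'' also does not work as stated, because the bigraded pieces of $\m_p^k$ can be infinite-dimensional over $\f_p$ (e.g.\ $k=\q$), so counting dimensions in a bidegree is meaningless; moreover the paper's statement, under its standing conventions, covers perfect fields of positive characteristic with no complex embedding at all. The paper's proof sidesteps all of this: Hoyois' computation gives $H_{**}MGL \simeq P_{**}\otimes \f_p[x_i \mid i \neq p^r-1]$ over any admissible base, and the Thom map to $H_{**}H\z_p \cong A//E(0)_{**}$ is identified as the inclusion of $P_{**}$ and zero on the complementary factor, so the kernel (hence failure of the SES at $q=0$) is visible immediately and uniformly in $k$. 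To repair your write-up, replace the realization/base-change leg with this structural identification over $k$.
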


\begin{proof}
	The slices of $MGL$ are \cite[Theorem 8.5]{Hoy15}\cite[Thm. 4.7]{Spi10}
	\[
	s_qMGL \simeq \Sigma^{2q,q}H\pi_{2q}MU.
	\]
	In particular, we have that $s_0MGL \simeq H\z_p$, and that the map 
	\[
	MGL\to s_0MGL\simeq H\z_p
	\]
	coincides with the Thom class.
	
	Furthermore, by \cite[Theorem 6.5]{Hoy15}, we have an isomorphism of left $A_{**}$-comodules
	\[
	H_{**}MGL\simeq P_{**}\otimes \f_p[x_i\mid i\neq p^r-1],
	\]
	where 
	\[
	P_{**} = \m_p^k[\xi_1, \xi_2, \ldots]\subseteq A_{**}
	\]
	is the even subalgebra of the motivic dual Steenrod algebra. The map 
	
	\[
	H_{**}MGL\to H_{**}H\z_p\cong A// E(0)_{**}
	\]
	is the inclusion of $P_{**}$ and 0 on the other factor. Thus the fiber sequence
	\[
	f_1 MGL\to MGL\to s_0MGL
	\]
	induces a long exact sequence in mod $p$ motivic homology which is not short exact. This shows that $MGL$ is not algebraically sliceable. 
\end{proof}

\subsection{The sphere spectrum}

We conclude our discussion of examples and non-examples by discussing the sphere spectrum $S^{0,0} \in \SH(k)$.

First, we observe that as in the case of the very effective cover of hermitian K-theory $kq$ (see Remark \ref{Rmk:kqsliceable}), the effective and very effective slice filtrations of the sphere spectrum are different. Unlike $kq$, however, we do not know if the sphere spectrum is algebraically sliceable with respect to any variant of either slice filtration. 

In any case, it is interesting to suppose that $S^{0,0}$ is algebraically sliceable and to speculate on the behavior of its algebraic slice spectral sequence. Such a spectral sequence would have the form
\begin{equation}
E_1^{q,s,t,u} = \bigoplus_{i \geq 0} \Ext_{A^\vee}^{s,t,u}(H_{**}s_qS^{0,0}) \Rightarrow \Ext_{A^\vee}^{s+q,t+q,u}(\m_p^k).
\end{equation}
By \cite[Thm. 2.12]{RSO19}, the effective slices of the sphere spectrum are intimately connected to the $E_2$-page of the classical Adams--Novikov spectral sequence. That is, there is an equivalence of motivic spectra
\begin{equation}
s_q((S^{0,0})_p^\wedge) \simeq \bigvee_{i \geq 0} \Sigma^{2q-i,q} H(\Ext_{BP_*BP}^{i,2q}(BP_*,BP_*)),
\end{equation}
where $BP$ is the classical $p$-primary Brown--Peterson spectrum. Therefore the algebraic slice spectral sequence for the sphere spectrum, if it existed, would begin with the $\Ext$-groups of the entries in the $E_2$-term of the classical Adams--Novikov spectral sequence and end with the $E_2$-term of the $k$-motivic Adams spectral sequence.

\section{Comparison of differentials}\label{Sec:Differentials}

We now apply Theorem \ref{Thm:comparisonSquare} to compare differentials in the effective slice spectral sequence, the motivic Adams spectral sequence, and the $\rho$-Bockstein spectral sequence. 

\subsection{The $\rho$-Bockstein spectral sequence}\label{SS:RhoBSS}

When $k=\c$, the groups $\Ext_{A^\vee}^{***}(\m_p^\c,H_{**}(X))$ are roughly as complicated as their classical counterparts $\Ext_{A^\vee}^{**}(\f_p,H_*(\Re(X)))$. When $k$ is an arbitrary base field, the groups $\Ext_{A^\vee}^{***}(\m_p^k,H_{**}(X))$ contain more complicated arithmetic data coming from $\m_p^k$:

\begin{thm}[Voevodsky]\cite[Thm. 2.7]{IO18}
Suppose that $p$ and $\chara(k)$ are coprime, and that $k$ contains a primitive $p$-th root of unity. Then there is an isomorphism
$$\m_p^k \cong K^M_*(k)/p[\tau]$$
where $K_n^M(k)$ has degree $(-n,-n)$ and $|\tau|$ has degree $(0,-1)$.\footnote{Recall that we have defined $\m_p^k$ as the mod $p$ motivic \emph{homology} of a point, so the bidegree of elements in $K_n^M(k)$, as well as $\tau$, are negative instead of positive.}
\end{thm}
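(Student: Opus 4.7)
The plan is to deduce this as a combination of three deep theorems of Voevodsky and collaborators, together with the group-theoretic input that $\mu_p(k) \cong \mathbb{F}_p$ when $k$ contains a primitive $p$-th root of unity. Throughout, I will work with mod $p$ motivic cohomology $H^{s,w}(\operatorname{Spec} k;\mathbb{Z}/p)$ and convert to the homological bigrading $\mathbb{M}_p^k$ at the end by the identification $H_{s,w} \cong H^{-s,-w}$.

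First, I would pin down the ``diagonal'' part. By the norm residue isomorphism theorem (Voevodsky's Milnor conjecture at $p=2$, and the Bloch--Kato conjecture at odd primes), one has
\[
H^{n,n}(\operatorname{Spec} k;\mathbb{Z}/p) \cong K_n^M(k)/p
\]
for all $n \geq 0$, and these groups vanish for $n<0$. Next I would produce the class $\tau$: by definition $H^{0,1}(\operatorname{Spec} k;\mathbb{Z}/p) \cong \mu_p(k)$, and our hypothesis that $k$ contains a primitive $p$-th root of unity makes this group nonzero and canonically (up to choice of root) isomorphic to $\mathbb{F}_p$, so a generator $\tau \in H^{0,1}(\operatorname{Spec} k;\mathbb{Z}/p)$ exists.

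Second, I would extend these computations off the diagonal using the Beilinson--Lichtenbaum theorem, which is now a theorem thanks to Voevodsky, Rost, and Weibel. This asserts that the canonical comparison map
\[
H^{s,w}(\operatorname{Spec} k;\mathbb{Z}/p) \longrightarrow H^s_{\text{\'et}}(\operatorname{Spec} k;\mu_p^{\otimes w})
\]
is an isomorphism for $s \leq w$, and that the motivic groups vanish for $s>w$ (the latter from the general Beilinson--Soul\'e vanishing for fields, which is built into the definition of motivic cohomology). Under our root-of-unity hypothesis, the \'etale sheaves $\mu_p^{\otimes w}$ are all isomorphic to the constant sheaf $\mathbb{F}_p$ via the chosen primitive root, and under the Bloch--Kato isomorphism $H^s_{\text{\'et}}(\operatorname{Spec} k;\mu_p^{\otimes s}) \cong K_s^M(k)/p$, multiplication by a suitable power of $\tau$ implements the twist-shifting isomorphism. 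This yields
\[
H^{s,w}(\operatorname{Spec} k;\mathbb{Z}/p) \cong K_s^M(k)/p \cdot \tau^{w-s}, \qquad 0 \leq s \leq w,
\]
with all other groups zero.

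Finally, I would assemble the bigraded picture into a polynomial algebra statement. The product structure in motivic cohomology is compatible with the product on Milnor K-theory in the $s=w$ slope and with powers of $\tau$ in the $s=0$ slope, so the isomorphism is one of bigraded rings: $\bigoplus_{s,w} H^{s,w}(\operatorname{Spec} k;\mathbb{Z}/p) \cong (K_*^M(k)/p)[\tau]$ with $|\tau|=(0,1)$ and $|K_n^M(k)/p|=(n,n)$ in cohomological bigrading. Reindexing to homological bigrading flips both degrees to negative, producing the stated formula $\mathbb{M}_p^k \cong K_*^M(k)/p[\tau]$ with the claimed degrees. The main obstacle is purely one of citation and bookkeeping: the substantive mathematical content (norm residue isomorphism and Beilinson--Lichtenbaum) is a highly nontrivial external input, so my ``proof'' is really a careful reduction to these results, and the only work of my own would be to verify that the multiplicative structures line up correctly and that the bigrading convention from the paper matches the standard motivic convention.
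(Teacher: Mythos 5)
The paper does not prove this statement at all: it is quoted as an external input, attributed to Voevodsky and cited from \cite[Thm. 2.7]{IO18}, so there is no internal argument to compare yours against. Your sketch is, in outline, the standard derivation that sits behind that citation: the norm residue isomorphism theorem identifies the diagonal $H^{n,n}$ with $K^M_n(k)/p$, the hypothesis on roots of unity gives the class $\tau\in H^{0,1}\cong\mu_p(k)$ and trivializes the twists $\mu_p^{\otimes w}$, and Beilinson--Lichtenbaum plus the vanishing of $H^{s,w}(\Spec k)$ for $s>w$ (and for $w<0$, which you should state explicitly) assembles the answer into $K^M_*(k)/p[\tau]$ multiplicatively, with the paper's homological bigrading obtained by negating the cohomological one. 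One correction: the vanishing for $s>w$ over a field is \emph{not} the Beilinson--Soul\'e vanishing conjecture --- that conjecture concerns the region $s<0$ (positive weight) and is open in general --- rather it follows from the fact that the motivic complex $\mathbb{Z}(w)$ is concentrated in cohomological degrees at most $w$, which is the parenthetical justification you already give; so the mathematics is fine but the attribution should be fixed. With that relabeling, your proposal is a faithful reduction of the quoted theorem to the norm residue isomorphism and Beilinson--Lichtenbaum, which is exactly the content the paper delegates to the literature.
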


\begin{exm}\cite[Exs. 2.1, 2.2, 2.6]{IO18}\cite[Prop. 2.4.2]{Sta16}
We record $\m_p^k$ for several cases of $k$ considered later:
\begin{enumerate}
\label{exm:coefficient_ring}
\item $\m_p^\c \cong \f_p[\tau]$ with $|\tau| = (0,-1)$. 

\item $$\m_p^\r \cong \begin{cases}
\f_2[\tau,\rho] \quad & \text{ if } p=2, \\
\f_p[\theta] \quad & \text{ if } p \neq 2,
\end{cases}$$
where $|\rho| = (-1,-1)$, $|\tau| = (0,-1)$, and $|\theta| = (0,-2)$. 

\item $\m_2^{\f_q} \cong \f_2[\tau,u]/u^2$ with $|u| = (-1,-1)$ and $|\tau| = (0,-1)$. We note that the $A^\vee$-coaction on $u$ depends on the equivalence class of $q$ modulo $4$, and if $q \equiv 3 \mod 4$, then $u = \rho$. 

\end{enumerate}
\end{exm}

Recall that $\Ext_{A^\vee}^{***}(\m_p^k,H_{**}(X))$ may be calculated using the cobar complex $C_\bullet(\m_p^k, A^\vee, H_{**}(X))$. When $k=\r$ and $p=2$, filtering the cobar complex by powers of $\rho$ gives rise to the \emph{$\rho$-Bockstein spectral sequence} \cite{Hil11}
\begin{equation}\label{Eqn:RhoBocksteinSS}
E_1^{****} = \Ext_{A^\vee}^{***}(\m_2^\c,H_{**}(X))[\rho] \Rightarrow \Ext_{A^\vee}^{***}(\m_2^\r,H_{**}(X))
\end{equation}
with differentials $d_r: E_r^{q,s,t,u} \to E_r^{q+r,s+1,t,u}$ calculated using the coaction of $A^\vee$ on $\m_2^\r$. The $\rho$-Bockstein spectral sequence can be built over more general base fields (e.g. $k= \f_q$ \cite{WO17}) using analogous constructions. 

Hill applied the $\rho$-Bockstein spectral sequence to compute $\Ext_{E(n)}^{***}(\m_2^\r,\m_2^\r)$ for all $n \geq 0$ \cite[Thm. 3.1]{Hil11} and $\Ext_{A(1)}^{***}(\m_2^\r,\m_2^\r)$ \cite[Fig. 6]{Hil11}. In both cases, the differentials were computed using an explicit analysis of the cobar complex and technical Massey product arguments. The groups $\Ext_{A(1)}^{***}(\m_2^\r,\m_2^\r)$ were recomputed by Guillou--Hill--Isaksen--Ravenel \cite[Sec. 6]{GHIR19} using a result comparing the $\rho$-inverted $\r$-motivic and classical $\Ext$ groups \cite[Prop. 4.1]{GHIR19} to force all of the necessary differentials. 

	As mentioned in the proof of Theorem \ref{Thm:knAlg}, $H^{**}(k(n)) \cong A//E(Q_n)$. Its mASS $E_2$-page is the the trigraded group $\Ext_{E(Q_n)}^{***}(\m_2, \m_2).$ Restricting to $\c$ and $\r$, we have the following result on the $\rho$-Bockstein spectral sequence associated to $k(n)$; compare with \cite[Thm. 6.3]{Hea19} for the case $n=1$. 

\begin{prop}\label{Prop:RhoBocksteinEQn}
For all $n \geq 0$, the $E_1$-page of the $\rho$-Bockstein spectral sequence converging to $\Ext_{E(Q_n)}^{***}(\m_2^\r, \m_2^\r)$ is given by
$$E_1 = \Ext^{***}_{E(Q_n)}(\m_2^\c,\m_2^\c)[\rho] \cong \f_2[\rho, \tau, v_n].$$
The nontrivial differentials are generated under $\rho$- and $v_n$-linearity by the differentials
$$d_{2^{n+1}-1}(\tau^{2^n}) = \rho^{2^{n+1}-1}v_n.$$
\end{prop}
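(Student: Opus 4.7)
The plan is to combine a computation of the $E_1$-page with an explicit cobar-level analysis of the $A^\vee$-coaction on $\m_2^\r$, projected to $E(Q_n)^\vee$. I would carry this out in three steps.

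First, I would identify the $E_1$-page. Over $\c$ the coaction of $E(Q_n)^\vee$ on $\m_2^\c = \f_2[\tau]$ is trivial by bidegree considerations, so $\Ext^{***}_{E(Q_n)^\vee}(\m_2^\c,\m_2^\c) \cong \f_2[\tau, v_n]$, where $v_n$ is the class of the cobar primitive $[\bar{\tau}_n]$ in the appropriate tridegree. Adjoining $\rho$ yields the claimed $E_1$-page.

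Next, I would compute the differential $d_{2^{n+1}-1}(\tau^{2^n})$ by working directly in the cobar complex of $E(Q_n)^\vee$ with coefficients in $\m_2^\r$, filtered by powers of $\rho$. The essential inputs are the $A^\vee$-coaction $\psi(\tau) = 1 \otimes \tau + \bar{\tau}_0 \otimes \rho$ (reflecting $\text{Sq}^1(\tau) = \rho$) together with the motivic dual-Steenrod-algebra relations $\bar{\tau}_i^2 = \tau \bar{\xi}_{i+1} + \rho \bar{\tau}_{i+1}$ from Theorem \ref{Thm:BPGLhomology}. Iteratively squaring $\bar{\tau}_0^2 = \tau \bar{\xi}_1 + \rho \bar{\tau}_1$ and tracking the term divisible by $\bar{\tau}_n$ gives
\[
\bar{\tau}_0^{2^n} \equiv \rho^{2^n-1}\bar{\tau}_n \pmod{(\bar{\xi}_j : j \geq 1)}
\]
in $A^\vee$. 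Applying the Frobenius squaring in characteristic $2$ to the coaction then yields
\[
\psi(\tau^{2^n}) = (1 \otimes \tau + \bar{\tau}_0 \otimes \rho)^{2^n} = 1 \otimes \tau^{2^n} + \bar{\tau}_0^{2^n} \otimes \rho^{2^n},
\]
and projecting to $E(Q_n)^\vee \otimes \m_2^\r$ (which additionally kills all $\bar{\tau}_i$ for $i \neq n$) leaves
\[
\psi(\tau^{2^n}) = 1 \otimes \tau^{2^n} + \bar{\tau}_n \otimes \rho^{2^{n+1}-1}.
\]
Hence the cobar boundary $d(\tau^{2^n}) = \bar{\tau}_n \otimes \rho^{2^{n+1}-1}$ sits in $\rho$-filtration $2^{n+1}-1$ and represents $\rho^{2^{n+1}-1}v_n$ on the $E_{2^{n+1}-1}$-page, giving the claimed differential.

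Finally, I would observe that $\rho$ and $v_n$ are permanent cycles---the former as an $\m_2^\r$-scalar fixed by the coaction, the latter as a cobar primitive---so multiplicativity of the spectral sequence propagates these differentials to the $\rho$- and $v_n$-multiples of $\tau^{2^n}$-monomials by the Leibniz rule, and a dimension count against the expected abutment forces collapse at $E_{2^{n+1}}$. The hard part will be the iterated-squaring step: verifying inductively that the coefficient of $\bar{\tau}_n$ in $\bar{\tau}_0^{2^n}$ modulo $\bar{\xi}$-terms is exactly $\rho^{2^n-1}$ requires careful bookkeeping with the $\bar{\tau}_i^2$-relation at each stage and tracking characteristic-$2$ cancellations after squaring.
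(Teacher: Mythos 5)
Your identification of the $E_1$-page is fine, but the two steps that carry the actual content both have genuine gaps. First, the concluding appeal to ``multiplicativity of the spectral sequence'' and the Leibniz rule is not available: as the remark immediately following this proposition points out, the $\rho$-Bockstein spectral sequence for $\Ext_{E(Q_n)}^{***}(\m_2^\r,\m_2^\r)$ is \emph{not} multiplicative. Indeed multiplicativity would contradict the very differential you are deriving: for $n \geq 1$ the class $\tau$ (and hence $\tau^{2^{n-1}}$) is a permanent cycle, so the Leibniz rule in characteristic $2$ would force $d(\tau^{2^n}) = d\bigl((\tau^{2^{n-1}})^2\bigr) = 0$. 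So the propagation of the differential to other monomials, and the collapse, cannot be argued this way; some substitute for $v_n$-linearity and a known abutment must be supplied.

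Second, and more seriously, the central coaction computation is not justified as written. You compute $\eta_R(\tau^{2^n})$ in $A^\vee$ and then ``project to $E(Q_n)^\vee$'' by killing the $\overline{\xi}_j$ and the $\overline{\tau}_i$ with $i \neq n$. But the relation $\overline{\tau}_{n-1}^{\,2} = \tau\overline{\xi}_n + \rho\overline{\tau}_n$ that you quote from Theorem \ref{Thm:BPGLhomology} exhibits $\rho\overline{\tau}_n$ as an element of the ideal generated by the classes you kill, so any multiplicative projection of this kind also kills your intended target $\rho^{2^{n+1}-1}\overline{\tau}_n$; equivalently, if one first records the induced coaction on $\tau$ (trivial once $\overline{\tau}_0 \mapsto 0$, for $n \geq 1$) and then applies Frobenius, the putative cobar boundary of $\tau^{2^n}$ is zero. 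This order-of-operations discrepancy is exactly where the subtlety lives: the Hill-style cobar calculation works verbatim for $E(n)$, where $\overline{\tau}_0,\ldots,\overline{\tau}_n$ all survive in $E(n)^\vee$ and $\overline{\tau}_i^{\,2} = \rho\overline{\tau}_{i+1}$ there, but over $\r$ the Hopf-algebroid structure attached to $E(Q_n)$ alone is not the naive exterior object (the failure of multiplicativity above is a symptom), and your argument presupposes structure that has not been established. The paper circumvents all of this by comparison: Hill's computation of the $\rho$-Bockstein spectral sequence for $E(n)$ \cite[Thm.~3.2]{Hil11} shows that $\rho^{2^{n+1}-1}v_n$ and its $\rho$- and $v_n$-multiples vanish in $\Ext_{E(n)}^{***}(\m_2^\r,\m_2^\r)$, hence their images vanish in $\Ext_{E(Q_n)}^{***}(\m_2^\r,\m_2^\r)$, so these classes must be killed in the Bockstein spectral sequence, and degree considerations force $d_{2^{n+1}-1}(\tau^{2^n}) = \rho^{2^{n+1}-1}v_n$ as the only possibility. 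If you want a direct cobar proof instead, you must first pin down precisely what $(\m_2^\r, E(Q_n)^\vee)$ is over $\r$ and in what sense its cobar complex computes the Ext groups in question; that is the real ``hard part,'' not the bookkeeping in the iterated-squaring step.
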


\begin{proof}
The result about the $E_1$ page follows from direct computation.

The result about the differentials can be obtained by a similar computation as the $\rho$-Bockstein spectral sequence computation for $E(n)$ (\cite[Thm. 3.2]{Hil11}). Here we analyze it by comparison. The $\rho$-Bockstein spectral sequence for $E(n)$ has a differential hitting $\rho^{2^{n+1}-1}v_n$ and its $v_n$- and $\rho$-power multiples. Therefore, in our case, the elements with the same names must be killed. By degree reasons, the only possibility is $d_{2^{n+1}-1}(\tau^{2^n}) = \rho^{2^{n+1}-1}v_n$ and differentials generated by it under $\rho$ and $v_n$-linearity.
	
	\end{proof}

\begin{rem2}
Although the $\rho$-Bockstein spectral sequence for $\Ext_A^{***}(\m_2^\r,\m_2^\r)$ is multiplicative \cite[Prop. 2.3]{Hil11}, the $\rho$-Bockstein spectral sequence for $\Ext_{E(Q_n)}^{***}(\m_2^\r,\m_2^\r)$ is not. Thus no issues arise from the fact that $\tau$ does not support a differential while $\tau^{2^n}$ does. This is analogous to the topological situation, where the ESSS for the sphere spectrum is multiplicative \cite[Prop. 2.24]{RSO19}, but the ESSS for $k(n)$ cannot be multiplicative: the differentials in the $n=1$ case of \cite[Thm. 6.3]{Hea19} are incompatible with a Leibniz rule.
\end{rem2}

\subsection{Review of Adams differentials}

In this section we record some facts about differentials in the motivic Adams spectral sequence. For the remainder of this section, we focus on the case $p=2$ since $\rho=0$ when $p$ is odd. 

\begin{lem}\label{Lem:mASSCollapse}
If $k$ is an algebraically closed field or $k=\r$, then the motivic Adams spectral sequences converging to $\pi_{**}(k(n))$ and $\pi_{**}(BPGL\langle n \rangle)$, $n \geq 0$, collapse at $E_2$.
\end{lem}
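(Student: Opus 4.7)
The plan is to identify the $E_2$-page of each motivic Adams spectral sequence and rule out nontrivial differentials using a combination of tri-degree arguments, multiplicativity, and detection by named homotopy classes.

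By a change of rings applied to $H^{**}(k(n)) \cong A//E(Q_n)$ and $H^{**}(BPGL\langle n\rangle) \cong A//E(n)$ (Theorem \ref{Thm:BPGLhomology}), the $E_2$-pages are
\[
E_2(k(n)) \cong \Ext_{E(Q_n)}^{***}(\m_2^k, \m_2^k), \qquad E_2(BPGL\langle n\rangle) \cong \Ext_{E(n)}^{***}(\m_2^k, \m_2^k).
\]
For $k$ algebraically closed, these are polynomial algebras $\m_2^k[v_n]$ and $\m_2^k[v_0, v_1, \ldots, v_n]$, where $v_i$ has tri-degree $(s, t, u) = (1, 2^{i+1}-1, 2^i-1)$ for $i \geq 1$ and $v_0$ has $(1, 0, 0)$. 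For $k = \r$, these Ext rings are computed via the $\rho$-Bockstein spectral sequence: Proposition \ref{Prop:RhoBocksteinEQn} for $k(n)$, and Hill's Theorem 3.1 in \cite{Hil11} for $BPGL\langle n\rangle$.

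Over an algebraically closed field I would establish collapse as follows. For $k(n)$ with $n \geq 1$, every monomial $\tau^a v_n^b \in E_2$ lies in stem $t - s = 2b(2^n-1)$, divisible by $2(2^n-1) \geq 2$. A $d_r$ differential lowers stem by one, so its target lies in a stem not divisible by $2(2^n-1)$, forcing the target group to vanish; the degenerate case $n = 0$ (where $k(0) = H\z_2$) is standard. For $BPGL\langle n\rangle$, the mASS is multiplicative since $BPGL\langle n\rangle$ is a homotopy ring spectrum, and each polynomial generator is a permanent cycle: $\tau$ detects $\tau \in \pi_{0,-1}$, $v_0$ detects $2$, and $v_i$ for $1 \leq i \leq n$ detects the generator $v_i \in \pi_{2(2^i-1), 2^i-1}(BPGL\langle n\rangle)$ built into the construction of $BPGL\langle n\rangle$ as a quotient of $BPGL$. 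The Leibniz rule propagates this to all of $E_2$.

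For $k = \r$ I would use $\rho$-linearity of the mASS (valid because $\rho$ is a permanent cycle detecting $\rho \in \pi_{-1,-1}$, so $d_r(\rho \cdot x) = \rho \cdot d_r(x)$) together with the base change map to the $\c$-motivic mASS. Since base change kills $\rho$ and is compatible with mASS differentials, any $\r$-motivic $d_r(x)$ maps to a $\c$-motivic differential on the reduction of $x$, which vanishes by the previous case; hence $d_r(x) \in \rho \cdot E_2^\r$. Combining this with $\rho$-linearity and induction along the $\rho$-Bockstein filtration of $E_2^\r$ (whose associated graded is computed by the $\rho$-BSS $E_\infty$) yields $d_r = 0$. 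The main obstacle I anticipate is carefully handling hidden $\rho$-extensions in the $\r$-motivic Ext ring for $BPGL\langle n\rangle$, where one must trace each $E_2$-class back to a $\c$-motivic permanent cycle via the $\rho$-Bockstein filtration and verify that Hill's relations preclude any exotic differentials.
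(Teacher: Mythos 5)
Your treatment of the algebraically closed case is correct and is essentially the paper's argument: after the change-of-rings identification of the $E_2$-pages, everything is concentrated in stems divisible by $2(2^n-1)$ for $k(n)$, respectively in even stems for $BPGL\langle n\rangle$, while an Adams $d_r$ lowers the stem by one, so both spectral sequences collapse for tridegree reasons. (For $BPGL\langle n\rangle$ this parity-of-stems observation already suffices; the multiplicativity-plus-detection argument for $\tau$, $v_0$, $v_i$ is fine but unnecessary.)

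The $k=\r$ half, however, has a genuine gap at its final step. What your base-change-plus-$\rho$-linearity argument actually shows is that $d_r$ strictly raises the $\rho$-adic filtration: if $x=\rho^k z$ then $d_r(x)=\rho^k d_r(z)\in\rho^{k+1}E_r$ (and even the input ``$\ker(\text{base change})=\rho\cdot E_2^\r$'' is an unproved step, though it follows from the long exact sequence associated to $0\to\m_2^\r\xrightarrow{\rho}\m_2^\r\to\m_2^\c\to 0$, applied on the first page carrying a nonzero differential). But a map which raises the $\rho$-filtration, and hence induces zero on the associated graded computed by the $\rho$-BSS $E_\infty$, need not be zero -- multiplication by $\rho$ is exactly such a map -- so ``induction along the $\rho$-Bockstein filtration'' does not close: you obtain one extra factor of $\rho$, not infinite $\rho$-divisibility, and there is no completeness statement available to finish. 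To conclude, one must rule out $\rho$-hidden differentials using the explicit $\r$-motivic $E_2$-pages. For $k(n)$ this is a short tridegree check from $\Ext_{E(Q_n)}^{***}(\m_2^\r,\m_2^\r)\cong\f_2[\tau,\rho,t_{n+1},v_n]/(\tau^{2^n},\rho^{2^{n+1}-1}v_n)$: a $d_r$ raises the $v_n$-exponent by $r$ while the internal degree forces the target's $\rho$-exponent to be at least $r(2^{n+1}-2)+1\geq 2^{n+1}-1$, so every potential target is annihilated by the relation $\rho^{2^{n+1}-1}v_n=0$; this is the paper's ``tridegree reasons.'' For $BPGL\langle n\rangle$ over $\r$ the paper does not argue abstractly at all: it cites Hill's computation \cite[Thm. 5.3]{Hil11}, which establishes the degeneration directly. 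So either reproduce such a degree analysis from Hill's description of $\Ext_{E(n)}^{***}(\m_2^\r,\m_2^\r)$ or cite that result; the filtration argument alone does not suffice.
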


\begin{proof}
Recall $H^{**}k(n) \cong A//E(Q_n)$ and $H^{**}BPGL\langle n \rangle \cong A//E(n)$. If $k$ is algebraically closed, then 
$${^{\mASS}E}_2(k(n)) = \Ext_{E(Q_n)}^{***}(\m_2^k,\m_2^k) \cong \Ext_{E(Q_n)}^{***}(\m_2^\c,\m_2^\c) \cong \f_2[\tau,v_n].$$
$${^{\mASS}E}_2(BPGL \langle n \rangle) = \Ext_{E(n)}^{***}(\m_2^k,\m_2^k) \cong \Ext_{E(n)}^{***}(\m_2^\c,\m_2^\c) \cong \f_2[\tau,v_1,\ldots,v_n].$$
Both spectral sequences collapse for tridegree reasons. 

When $k=\r$, this follows from \cite[Thm. 5.3]{Hil11} for $BPGL \langle n \rangle$. Similarly, the spectral sequence collapses at $E_2$ for $k(n)$ for tridegree reasons. 
\end{proof}

On the other hand, there are possible differentials when $k=\f_q$.

\begin{lem}
\label{lem_mASSE_2}
When $q \equiv 1 \mod 4$, the $E_2$-page of the $\f_q$-motivic Adams spectral sequence for $kgl$ is given by
$$E_2 = \Ext_{E(1)}^{***}(\m_2^{\f_q},\m_2^{\f_q}) \cong \f_2[\tau, u, v_0,v_1]/u^2.$$
When $q \equiv 3 \mod 4$, the $E_2$-page is given by
$$E_2 = \Ext_{E(1)}^{***}(\m_2^{\f_q},\m_2^{\f_q}) \cong \f_2[\tau^2,\rho, [\rho\tau], v_0,v_1]/(\rho^2, \rho[\rho\tau],[\rho\tau]^2,\rho v_0).$$
\end{lem}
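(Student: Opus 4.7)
The plan is to reduce the computation to the known case over $\c$ via a Bockstein-type spectral sequence filtering by powers of $u$. The starting point is the identification $H^{**}(kgl) \cong A//E(1)$, so that a change-of-rings gives
\[
{^{\mASS}E_2}(kgl) \cong \Ext_{E(1)}^{***}(\m_2^{\f_q}, \m_2^{\f_q}).
\]
By Example \ref{exm:coefficient_ring}(3), $\m_2^{\f_q} \cong \m_2^\c[u]/u^2$. Filtering the cobar complex $C_\bullet(\m_2^{\f_q}, E(1)^\vee, \m_2^{\f_q})$ by powers of $u$ yields a spectral sequence
\[
E_1 \cong \Ext_{E(1)}^{***}(\m_2^\c, \m_2^\c)[u]/u^2 \Rightarrow \Ext_{E(1)}^{***}(\m_2^{\f_q}, \m_2^{\f_q}),
\]
whose differentials are controlled by the $A^\vee$-coaction on $u$. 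A direct computation (as in the proof of Proposition \ref{Prop:RhoBocksteinEQn}, using that $E(1)^\vee$ is generated by the duals of the Milnor primitives $Q_0, Q_1$) gives $\Ext_{E(1)}^{***}(\m_2^\c, \m_2^\c) \cong \f_2[\tau, v_0, v_1]$, so the $E_1$-page is $\f_2[\tau, u, v_0, v_1]/u^2$.

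For $q \equiv 1 \pmod 4$, the element $u$ is primitive under the $A^\vee$-coaction (per the remark following Example \ref{exm:coefficient_ring}), so the $u$-Bockstein spectral sequence collapses at $E_1$. This yields the claimed description $\f_2[\tau, u, v_0, v_1]/u^2$.

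For $q \equiv 3 \pmod 4$, we have $u = \rho$, and the relevant Bockstein spectral sequence coincides with the $\rho$-Bockstein spectral sequence of Section \ref{SS:RhoBSS} with the caveat that $\rho^2 = 0$. I would establish the single generating differential $d_1(\tau) = \rho v_0$ by inspecting the $A^\vee$-coproduct on $\tau$ modulo $\rho^2$ and comparing with Hill's analogous calculation \cite[Thm. 3.1]{Hil11} for $E(1)$ over $\r$; this is the step I expect to be the main obstacle, as it requires care in matching cobar representatives of classes with names like $[\rho\tau]$. Once this differential is established, the rest is bookkeeping: since $\rho^2 = 0$, we get $d_1(\tau^2) = 0$, $d_1(\rho\tau) = \rho^2 v_0 = 0$, so $\tau^2$ and $\rho\tau$ both survive to yield the generators $\tau^2$ and $[\rho\tau]$; the class $\rho v_0$ is killed, giving the relation $\rho v_0 = 0$; the relations $\rho[\rho\tau] = 0$ and $[\rho\tau]^2 = 0$ both follow from $\rho^2 = 0$; and $v_1$ is a permanent cycle for tridegree reasons. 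No further differentials are possible since every surviving class outside of $\rho$-filtration $0$ already has $\rho$-filtration at most $1$, and $d_r$ increases $\rho$-filtration by $r$. Counting generators and relations yields the displayed presentation.
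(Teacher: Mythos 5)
Your argument is correct and is essentially the paper's own proof: for $q \equiv 1 \pmod 4$ the paper likewise uses that $u$ is a comodule primitive ($\eta_L(u)=\eta_R(u)$) to get $\Ext^{\f_q}\cong\Ext^{\c}[u]/u^2$, and for $q \equiv 3 \pmod 4$ it runs the same two-step $\rho$-Bockstein (a long exact sequence, since $\rho^2=0$) with $d_1$-differentials generated by $d_1(\tau)=\rho v_0$ and collapse at $E_2$. The differential you flag as the main obstacle is immediate from the coaction $\eta_R(\tau)=\tau+\rho\tau_0$, exactly as you propose, so there is no gap.
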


\begin{proof}
When $q \equiv 1 \mod 4$, the class $u \in \m_2^{\f_q}$ satisfies $\eta_R(u) = \eta_L(u)$. Therefore it is a permanent cycle in the cobar complex and we see that $\Ext^{\f_q} \cong \Ext^{\c}[u]/u^2$. 

When $q \equiv 3 \mod 4$, the result follows from the $\rho$-Bockstein spectral sequence\footnote{In this case, just a long exact sequence.}. In particular, the $\rho$-Bockstein $d_1$-differentials are generated under the Leibniz rule by $d_1(\tau) = \rho v_0$, and the spectral sequence collapses at $E_2$. 
\end{proof}

Kylling showed that the $\f_q$-motivic Adams spectral sequences for $H\z_2$, $kgl$, and $kq$ do not collapse \cite[Sec. 4.2]{Kyl15}. 

\begin{thm}\label{Thm:mASSBPGL}
Let $\nu_2$ denote $2$-adic valuation. When $q \equiv 1 \mod 4$, the nontrivial differentials in the motivic Adams spectral sequence converging to $\pi_{**}^{\f_q}(BPGL\langle m \rangle)$ are generated under the Leibniz rule by
$$d_{\nu_2(q-1)+s}(\tau^{2^s}) = u \tau^{2^s-1} h_0^{\nu_2(q-1)+s}, \quad s \geq 0.$$
When $q \equiv 3 \mod 4$, they are generated by
$$d_{\nu_2(q^2-1)+s-1}(\tau^{2^s}) = \rho \tau^{2^s-1} h_0^{\nu_2(q^2-1)+s}, \quad s \geq 1.$$
\end{thm}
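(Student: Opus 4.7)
The plan is to mimic the computation of the $E_2$-page given in Lemma \ref{lem_mASSE_2} for $kgl$, extended to $BPGL\langle m \rangle$, and then to transport Kylling's mASS differentials for $H\z_2$ via the naturality of the quotient map $BPGL\langle m \rangle \to H\z_2 = BPGL\langle 0\rangle$, using an inductive bidegree argument to pin down the precise targets.

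\textbf{Step 1: The $E_2$-page.} Since $H^{**}BPGL\langle m \rangle \cong A//E(m)$, a change-of-rings gives ${^{\mASS}}E_2 = \Ext_{E(m)}^{***}(\m_2^{\f_q}, \m_2^{\f_q})$. As in the proof of Lemma \ref{lem_mASSE_2}, compute this via the $\rho$-Bockstein spectral sequence converging from $\Ext_{E(m)}^{***}(\m_2^\c, \m_2^\c)[\rho] \cong \f_2[\tau, v_0, \ldots, v_m][\rho]$. When $q \equiv 1 \bmod 4$ the class $u$ is an $A^\vee$-coaction primitive, so the Bockstein collapses and
\[
{^{\mASS}}E_2 \cong \f_2[\tau, u, v_0, \ldots, v_m]/u^2.
\]
When $q \equiv 3 \bmod 4$, the identification $u = \rho$ and the $A^\vee$-coaction on $\tau$ force the $\rho$-Bockstein $d_1$ to be generated by $d_1(\tau) = \rho v_0$ (the higher $\tau^{2^s}$ survive since $d_1$ is $2$-divisible on them). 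This gives a ring with relations analogous to the one in Lemma \ref{lem_mASSE_2}, with additional polynomial generators $v_2, \ldots, v_m$ adjoined.

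\textbf{Step 2: Naturality with Kylling's computation.} Kylling's computation of the $\f_q$-motivic Adams spectral sequence for $H\z_2$ yields precisely the differentials in the statement of the theorem (with $v_i = 0$ for $i\geq 1$). The quotient map $BPGL\langle m \rangle \to H\z_2$ induces a map on mASS which on $E_2$ is the natural surjection sending $v_i \mapsto 0$ for $i \geq 1$ and fixing $\tau, u, \rho, h_0 = v_0$. Hence $\tau^{2^s} \in {^{\mASS}}E_2(BPGL\langle m\rangle)$ maps to $\tau^{2^s}$ in the $H\z_2$ mASS, and by naturality any differential $d_r(\tau^{2^s})$ in $BPGL\langle m\rangle$ projects to the corresponding differential in $H\z_2$.

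\textbf{Step 3: Inductive identification of differentials.} Proceed by induction on $s$. The class $\tau^{2^s}$ lies in Adams filtration $0$, so it cannot be hit by a differential on any page. For the base case, the tridegree of $\tau$ (resp.\ $\tau^2$ when $q \equiv 3 \bmod 4$) admits no possible shorter $d_r$-target on $E_2$ by direct inspection of the polynomial structure. For the inductive step, the previous differentials have killed $\tau^{2^j}$ for $j < s$, and $\tau^{2^s}$ survives to the page $r = \nu_2(q-1)+s$ (resp.\ $\nu_2(q^2-1)+s-1$) because no $d_{r'}$ with $r' < r$ has the appropriate bidegree. By naturality and Kylling's differential in $H\z_2$, the differential $d_r(\tau^{2^s})$ in $BPGL\langle m\rangle$ projects to the nonzero target $u\tau^{2^s-1}h_0^{\nu_2(q-1)+s}$ in $H\z_2$, hence is nonzero. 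To identify its exact form in $BPGL\langle m\rangle$, show that in the appropriate tridegree, the only element on $E_r$ mapping to the $H\z_2$ target is the element with the same name: any other monomial must involve some $v_i$ with $i \geq 1$, and the combination of motivic weight and Adams filtration rules these out by comparing the degrees of $v_i$ with those of $u, \tau, h_0$.

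\textbf{Main obstacle.} The main difficulty lies in Step 3: carefully checking that no monomial in $v_1, \ldots, v_m$ contributes to the target tridegree, and that there are no hidden shorter differentials from $\tau^{2^s}$ to products involving the higher $v_i$. This is ultimately a careful accounting of motivic weights versus Adams filtrations, made slightly more delicate in the $q \equiv 3 \bmod 4$ case by the presence of the relations $\rho^2 = 0$, $\rho[\rho\tau] = 0$, and $\rho v_0 = 0$, which must be propagated through the Leibniz rule alongside the inductive hypotheses.
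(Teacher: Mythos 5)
Your proposal is correct and takes essentially the same route as the paper: the paper's entire proof is to map $BPGL\langle m\rangle \to BPGL\langle 0\rangle \simeq H\z_2$ and pull in Kylling's differentials for $H\z_2$ by naturality of the motivic Adams spectral sequence, which is exactly your Steps 2--3, with your Step 1 and the weight/filtration bookkeeping being additional detail the paper leaves implicit. (One small caution: shorter differentials on $\tau^{2^s}$ are not excluded by bidegree alone, since classes such as $u\tau^{2^s-1}h_0^{r'}$ do occupy the relevant tridegrees; they are excluded because your own degree analysis shows the comparison map is injective there, so naturality forces any earlier differential to vanish.)
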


\begin{proof}
For all $n\geq 1$, there is a map
$$BPGL \langle m \rangle \to BPGL\langle 0 \rangle \simeq H\z_2.$$
The claimed differentials then follow by naturality of the motivic Adams spectral sequence from the differentials for $H\z_2$ which were calculated in \cite[Lems. 4.2.1, 4.2.2]{Kyl15}. 
\end{proof}

\subsection{Relating $\rho$-Bockstein and slice differentials over $\Spec(\r)$}\label{SS:ComparisonR}

Throughout this section, we fix $k=\r$. The square \eqref{Eqn:comparisonSquare} allows us to describe a precise relationship between differentials in the $\rho$-BSS and $n$-ESSS for $kgl$ and $k(n)$. We do so by first relating the $\rho$-BSS differentials with $n$-aESSS differentials, and then we will relate $n$-aESSS differentials to $n$-ESSS differentials.

\begin{prop}\label{Prop:rhoalg}
There are 1-to-1 correspondences between the following differentials:
\begin{enumerate}
\item $\rho$-BSS $d_3$-differentials and aESSS $d_1$-differentials for $kgl$.
\item $\rho$-BSS $d_{2^{n+1}-1}$-differentials and $n$-aESSS $d_1$-differentials for $k(n)$.
\end{enumerate}
Moreover, the aESSS and $n$-aESSS differentials are forced by knowledge of the corresponding $\rho$-BSS differentials. 
\end{prop}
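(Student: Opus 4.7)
The plan is to work at the level of the cobar complex and to show that the aESSS $d_1$-differential and the $\rho$-BSS differential (of length $3$ for $kgl$, length $2^{n+1}-1$ for $k(n)$) agree via a combination of tridegree counting and multiplicative propagation. First, I would compute the aESSS $E_1$-page by change of rings: for $kgl$, Theorem~\ref{Thm:kglAlg} gives
$$E_1^{q,s,t,u}(\textrm{aESSS for } kgl) \cong \Ext_{E(0)}^{s,t,u}(\m_2^\r,\, \Sigma^{2q,q}\m_2^\r),$$
and analogously $\Ext_{E(Q_n)}^{s,t,u}(\m_2^\r, \Sigma^{q(2p^n-2),q(p^n-1)}\m_2^\r)$ for $k(n)$. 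Each summand may itself be computed by an internal $\rho$-BSS, using Hill's result $\Ext_{E(0)}(\m_2^\r,\m_2^\r) \cong \f_2[\tau^2,\rho,v_0]/(\rho v_0)$ (cf.~\cite[Thm.~3.1]{Hil11}) for $kgl$ and Proposition~\ref{Prop:RhoBocksteinEQn} for $k(n)$. I would then identify $v_1^q := \Sigma^{2q,q}(1)$ (resp.~$v_n^q := \Sigma^{q(2p^n-2),q(p^n-1)}(1)$) with the unit of the $q$-th slice.

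Next, I would force the leading aESSS differentials by strong convergence and a tridegree count. The class $\tau^2 \in E_1^{0,0,0,-2}$ for $kgl$ cannot be a permanent cycle, since (a) the aESSS converges strongly to $\Ext_{E(1)}(\m_2^\r,\m_2^\r)$ by Section~\ref{SS:AlgSSS}, and (b) Hill's $\rho$-BSS shows $\tau^2$ is killed in this abutment. A tridegree check shows that the only nonzero element of the aESSS $E_1$-page in the target of any $d_r(\tau^2)$ is $\rho^3 v_1 \in E_1^{1,0,-1,-2}$, forcing $d_1(\tau^2) = \rho^3 v_1$; this matches in form the $\rho$-BSS $d_3(\tau^2) = \rho^3 v_1$. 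The same argument, together with Proposition~\ref{Prop:RhoBocksteinEQn}, forces $d_1(\tau^{2^n}) = \rho^{2^{n+1}-1}v_n$ in the $n$-aESSS for $k(n)$, matching the $\rho$-BSS $d_{2^{n+1}-1}$.

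Finally, I would promote the leading differentials to a full correspondence. For $kgl$ the aESSS is multiplicative (inherited from the ring structure on $kgl$ and the compatibility of the slice filtration with products), so Leibniz propagates $d_1(\tau^2) = \rho^3 v_1$ to all $d_1$-differentials as $\rho$- and $v_0$-multiples, mirroring the Leibniz propagation on the $\rho$-BSS side. For $k(n)$ neither spectral sequence is multiplicative, but both sets of differentials are generated by the leading case under $\rho$- and $v_n$-linearity only (as in Proposition~\ref{Prop:RhoBocksteinEQn}), so the correspondence persists, and in both cases knowledge of the $\rho$-BSS differentials determines the aESSS differentials. The main obstacle I anticipate is promoting the identification $v_1^q \leftrightarrow v_1^q$ (resp.~$v_n^q \leftrightarrow v_n^q$) from a tridegree coincidence to a genuine equality of cobar-level classes; this should follow from Theorem~\ref{Thm:kglAlg}'s identification of the slice-cover map with multiplication by $v_1$ (and analogously by $v_n$ for $k(n)$), combined with the naturality of the $\rho$-BSS applied to the slice tower.
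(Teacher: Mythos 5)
Your underlying mechanism is the same one the paper uses: force the aESSS (resp.\ $n$-aESSS) $d_1$-differentials by playing the $E_1$-page off against the known $\r$-motivic abutment computed by the $\rho$-BSS, with the target pinned down by a tridegree count; the paper merely packages this as the commuting square \eqref{Eqn:RhoAlg}, in which the over-$\c$ column collapses for parity reasons. However, there is a concrete error in your $k(n)$ case: the type $n$ slices of $k(n)$ are suspensions of $H\f_2$ (see \eqref{Eqn:knslice}), so each summand of the $n$-aESSS $E_1$-page is $\Sigma^{2q(2^n-1),q(2^n-1)}\f_2[\tau,\rho]$, i.e.\ a shifted copy of $\m_2^\r$, and \emph{not} an $\Ext_{E(Q_n)}$-group; the change of rings over $E(Q_n)$ and Proposition \ref{Prop:RhoBocksteinEQn} belong on the abutment side $\Ext_{A^\r}^{***}(k(n))$ only. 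As written this breaks your degree count, since in $\Ext_{E(Q_n)}^{***}(\m_2^\r,\m_2^\r)$ one has $\tau^{2^n}=0$, so the class you want to support the $d_1$ would not even be present on your claimed $E_1$-page. With the corrected $E_1$-page the forcing does go through: $\tau^{2^n}$ in filtration $0$ cannot lift to $\Ext_{A^\r}^{***}(k(n))$, any $d_r$ with $r\geq 2$ vanishes on it because its target has negative homological degree, and the unique nonzero class in the target tridegree of $d_1$ is $\rho^{2^{n+1}-1}v_n$.

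Two further soft spots. First, strong convergence is not available off the shelf: the proposition in Section \ref{SS:AlgSSS} gives strong convergence only under the hypothesis of collapse at a finite stage, which is essentially what you are in the middle of proving. This is easily repaired: for a filtration-zero class of homological degree zero, the defining long exact sequences show it is a $d_1$-cycle if and only if it lifts to $\Ext_{A^\r}^{***}(E)$, and all longer differentials on it are excluded by negative homological degree, so no convergence machinery is needed; alternatively, run both composites around \eqref{Eqn:RhoAlg} as the paper does. Second, your propagation of the $kgl$ differentials via the Leibniz rule uses a multiplicative structure on the aESSS that the paper does not establish (its multiplicativity discussion is omitted); the paper instead obtains the full one-to-one correspondence by the same abutment-versus-$E_1$ degree count that forces the leading differential, which needs no ring structure. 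Your $k(n)$ propagation, phrased via $\rho$- and $v_n$-linearity as in Proposition \ref{Prop:RhoBocksteinEQn}, is fine, and your closing point about identifying the filtration-one generators is handled in the paper simply by the identification of $f_1\to f_0$ with multiplication by $v_1$ in Theorem \ref{Thm:kglAlg}. None of this is unfixable, but as written the $k(n)$ half does not parse and the $kgl$ propagation rests on structure you have not justified.
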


\begin{proof}
Throughout this proof, we will write $\Ext_A^{***}(E)$ in place of $\Ext_A^{***}(H_{**}E)$ to avoid clutter. 

Let $E$ denote $kgl$ or $k(n)$, $n \geq 1$, let $i^* : \SH(\r) \to \SH(\c)$ denote base change along the inclusion $\r \to \c$, and let $\bar{s}_q$ denote $s_q$ if $E = kgl$ or $s_q^n$ if $E = k(n)$. 

We will prove the proposition by considering the square of spectral sequences
\begin{equation}\label{Eqn:RhoAlg}
\begin{tikzcd}
\bigoplus_{q \geq 0} \Ext_{A^\c}^{***}(\bar{s}_qi^*E)[\rho] \ar[r, Rightarrow, "\rho-BSS"] \ar[d, Rightarrow] & \bigoplus_{q \geq 0} \Ext_{A^\r}^{***}(\bar{s}_qE) \ar[d, Rightarrow] \\
\Ext_{A^\c}^{***}(i^*E)[\rho] \ar[r,Rightarrow,"\rho-BSS"] & \Ext_{A^\r}^{***}(E).
\end{tikzcd}
\end{equation}
where the rows are $\rho$-Bockstein spectral sequences, the left column is the direct sum over powers of $\rho$ of the aESSS or $n$-aESSS, and the right column is the aESSS or $n$-aESSS.\footnote{The top left corner is the $E_1$-page of the $\rho$-BSS because $\bar{s}_qi^*E \simeq i^*\bar{s}_qE$, which follows from the observation that $\bar{s}_qE$ is Eilenberg--MacLane for all $q \in \z$ and base change preserves Eilenberg--MacLane spectra.}

Let $E = kgl$. We can identify each corner of \eqref{Eqn:RhoAlg}. We have
$$\bigoplus_{q \geq 0} \Ext_{A^\c}^{***}(i^*s_q kgl)[\rho] \cong \bigoplus_{q \geq 0}\Ext_{A^\c}^{***}(\Sigma^{2q,q}i^*H\z_2)[\rho] \cong \bigoplus_{q \geq 0} \Sigma^{2q,q}\f_2[\tau,\rho,v_0],$$
$$\bigoplus_{q \geq 0} \Ext_{A^\r}^{***}(s_q kgl) \cong \bigoplus_{q \geq 0} \Ext_{A^\r}^{***}(\Sigma^{2q,q}H\z_2) \cong \bigoplus_{q \geq 0} \Sigma^{2q,q}\f_2[\tau^2,\rho,v_0]/(\rho v_0),$$
$$\Ext_{A^\c}^{***}(i^*kgl)[\rho] \cong \f_2[\tau,\rho,v_0,v_1],$$
$$\Ext_{A^\r}^{***}(kgl) \cong \f_2[\tau^4,\rho,v_0,v_1]/(\rho v_0, \rho^3 v_1)$$
where $|\tau| = (0,0,-1)$, $|\rho| = (0,-1,-1)$, $|v_0| = (1,0,0)$, and $|v_1| = (1,3,1)$. 

The left column collapses for quad-degree reasons: the $E_1$-page of the aESSS for $i^*kgl$ is concentrated in quad-degrees with $t-s \equiv 0 \mod 2$, but $d_r$ decreases $t-s$ by $1$ for all $r \geq 1$. The differentials in the top row are generated by 
$$d_1(\tau) = \rho v_0$$
and the differentials in the bottom row are generated by
$$d_1(\tau) = \rho v_0, \quad d_3(\tau^2) = \rho^3 v_1.$$
Since the upper right and lower left composites must arrive at the same answer, we must have
$$d_1(\tau^2) = \rho^3 v_1$$
in the right column, where by abuse of notation $v_1$ denotes the generator of $\Ext_{A^\r}^{***}(s_1kgl)$. 

Now fix $n\geq1$ and let $E = k(n)$. To declutter the notation, let $r = 2^n-1$. For the corners of \eqref{Eqn:RhoAlg}, we have
$$\bigoplus_{q \geq 0} \Ext_{A^\c}^{***}(i^*s_q^nk(n))[\rho] \cong \bigoplus_{q \geq 0} \Ext_{A^\c}^{***}(\Sigma^{2qr,qr}i^*H\f_2) \cong \bigoplus_{q \geq 0} \Sigma^{2qr,qr} \f_2[\tau,\rho],$$
$$\bigoplus_{q \geq 0} \Ext_{A^\r}^{***}(s_q^nk(n)) \cong \bigoplus_{q \geq 0} \Ext_{A^\r}^{***}(\Sigma^{2qr,qr}H\f_2) \cong \bigoplus_{q \geq 0} \Sigma^{2qr,qr} \f_2[\tau,\rho],$$
$$\Ext^{***}_{A^\c}(k(n))[\rho] \cong \f_2[\tau,\rho,v_n],$$
$$\Ext^{***}_{A^\r}(k(n)) \cong \f_2[\tau,\rho,t_{n+1},v_n]/(\tau^{2^n},\rho^{2^{n+1}-1}v_n)$$
where $|\tau| = (0,0,-1)$, $|\rho| = (0,-1,-1)$, $|v_n| = (1,2^{n+1}-1,2^n-1)$ and $|t_n| = (0,0,-2^{n+1})$. Here, $t_n$ is represented by $\tau^{2^{n+1}} $ on the $E_1$-page of the $\rho$-BSS, c.f. \cite[Thm. 6.3]{Hea19}. 

The left column and top row both collapse for quad-degree reasons, and the differentials in the bottom row are determined by
$$d_{2^{n+1}-1}(\tau^{2^n}) = \rho^{2^{n+1}-1}v_n.$$
We conclude by commutativity of \eqref{Eqn:RhoAlg} that we must have
$$d_1(\tau^{2^n}) = \rho^{2^{n+1}-1}v_n$$
in the $n$-aESSS, where by abuse of notation $v_n$ denotes the generator of $\Ext_{A^\r}^{***}(s_1^nk(n))$. 
\end{proof}

\begin{lem}\label{Lem:algtoSSS}
There are 1-to-1 correspondences between the following differentials:
\begin{enumerate}
\item aESSS $d_1$-differentials and ESSS $d_1$-differentials for $kgl$. 
\item $n$-aESSS $d_1$-differentials and $n$-ESSS $d_1$-differentials for $k(n)$. 
\end{enumerate}
Moreover, the ESSS and $n$-ESSS differentials are forced by knowledge of the corresponding aESSS and $n$-aESSS differentials.
\end{lem}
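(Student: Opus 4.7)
The plan is to leverage the comparison square (Theorem \ref{Thm:comparisonSquare}) by observing that the ($n$-)aESSS and the ($n$-)ESSS for $E \in \{kgl, k(n)\}$ arise by applying the two functors $\Ext_A^{***}(H_{**}(-))$ and $\pi_{**}(-)$ to the same type $n$ effective slice tower. In particular, for each $q$ the $d_1$-differential of both spectral sequences is induced by naturality from the single spectrum-level map
$$\partial_q : s_q^n E \to \Sigma^{1,0} f_{q+1}^n E \to \Sigma^{1,0} s_{q+1}^n E,$$
the composite of the connecting map and the projection onto the next slice. The strategy is then to identify these two induced maps by running the motivic Adams spectral sequence on each slice.

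For $E = kgl$ the slices are $s_q kgl \simeq \Sigma^{2q,q} H\z_2$, and for $E = k(n)$ the type $n$ slices are $s_q^n k(n) \simeq \Sigma^{2q(2^n-1), q(2^n-1)} H\f_2$. Over $k=\r$, the mASS converging to $\pi_{**}$ of each such slice collapses at $E_2$: in the $H\f_2$ case the $E_2$-page is concentrated in Ext-filtration zero by change of rings, and in the $H\z_2$ case change of rings identifies it with $\Ext_{E(0)^\vee}^{***}(\m_2^\r, \m_2^\r) \cong \f_2[\tau^2, \rho, v_0]/(\rho v_0)$, in which a tridegree count (using the relation $\rho v_0 = 0$) rules out all nontrivial $d_r$ with $r \geq 2$.

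Applying the naturality of the mASS to $\partial_q$ then yields a commutative square in which the horizontal arrows are the aESSS and ESSS $d_1$-differentials, and the vertical arrows are the edge maps coming from the collapsed mASS. Since these edge maps match up the named generators ($\tau^{2^n}$, $\rho$, $v_i$, \ldots) appearing in the source and target of the relevant differentials, we obtain the claimed 1-to-1 correspondence; the forcing assertion is then immediate, since one simply transports the aESSS $d_1$-value (computed from the $\rho$-BSS via Proposition \ref{Prop:rhoalg}) across the bijection. The main obstacle I anticipate is ensuring that the collapse of the mASS on each slice does not introduce hidden multiplicative or additive extensions among the named elements in the tridegrees hosting the $d_1$-targets; this should reduce to elementary bookkeeping using the explicit presentations above, since the $E_\infty$-page is sparse enough in the relevant bidegrees to admit only the obvious lifts to $\pi_{**}$.
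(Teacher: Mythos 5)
Your proposal is correct, and its mechanism differs from the paper's in a way worth noting. The paper argues globally through the comparison square: since the mASS's for $kgl$, $k(n)$ \emph{and} all of their (type $n$) slices collapse, and since the aESSS/$n$-aESSS differentials have length one, comparing the two composite filtrations of $\pi_{**}E$ forces the matching $d_1$'s in the ESSS/$n$-ESSS ``for quad-degree reasons.'' You instead argue locally at the level of maps: both $d_1$'s are induced by the single composite $\partial_q \colon s_q^n E \to \Sigma^{1,0} f_{q+1}^n E \to \Sigma^{1,0} s_{q+1}^n E$ (algebraic sliceability splits the long exact sequences, so the aESSS $d_1$ is $\Ext_{A^\vee}$ applied to $H_{**}(\partial_q)$, while the ESSS $d_1$ is $\pi_{**}(\partial_q)$), and naturality of the mASS along $\partial_q$ together with its collapse on each slice identifies the two. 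This buys you an economy the paper does not exploit: you never need the collapse of the mASS for the total spectra $kgl$ and $k(n)$, nor any convergence bookkeeping for the aESSS; but it concentrates all the risk in the one point you flag, namely a component of $\pi_{**}(\partial_q)$ of strictly higher Adams filtration than anything visible on $E_2$. That worry does close exactly as you suggest. For $k(n)$ the slices are suspensions of $H\f_2$, everything sits in Adams filtration zero, and $\pi_{**}(\partial_q)$ literally equals the $\Ext$-level map. For $kgl$, the relation $\rho v_0=0$ in $\Ext_{E(0)}^{***}(\m_2^\r,\m_2^\r)\cong \f_2[\tau^2,\rho,v_0]/(\rho v_0)$ shows that in every nonzero negative internal stem of $\pi_{**}H\z_2$ the collapsed $E_\infty$ is concentrated in filtration zero; since $\partial_q$ lowers the internal stem by $3$, its image always lands in such degrees, so the filtration-positive classes (the $v_0$-towers) must map to zero and the filtration-zero classes map exactly as the $E_2$-level map dictates. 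This is the same sparsity the paper compresses into ``quad-degree reasons,'' so your argument is complete once that stem count is spelled out.
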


\begin{proof}
We have shown $kgl$ is algebraically sliceable and $k(n)$ is algebraically $n$-sliceable, so in the notation of the previous proof, we have squares of spectral sequences
\begin{equation}
\begin{tikzcd}
& \bigoplus_{q \geq 0} \Ext^{***}_{A}(H_{**}\bar{s}_q E) \arrow[dl,Rightarrow,"aESSS"'] \arrow[dr,Rightarrow,"\bigoplus_{q \geq 0} mASS"] & \\
\Ext^{***}_A(H_{**}E) \arrow[dr,Rightarrow,"mASS"'] & & \bigoplus_{q \geq 0} \pi_{**}\bar{s}_q E \arrow[dl,Rightarrow,"ESSS"] \\
& \pi_{**}E. 
\end{tikzcd}
\end{equation}
for $E = kgl$ and $E = k(n)$. 

The mASS's for $kgl$, $k(n)$, $s_qkgl \simeq \Sigma^{2q,q}H\z_2$, and $s_q^n k(n) \simeq \Sigma^{2qr,qr}H\f_2$ all collapse. The only nontrivial differentials in the aESSS for $kgl$ and the $n$-aESSS for $k(n)$ have length one, so for quad-degree reasons, we must have identical differentials in the ESSS for $kgl$ and $n$-ESSS for $k(n)$. 
\end{proof}

Putting together Proposition \ref{Prop:rhoalg} and Lemma \ref{Lem:algtoSSS}, we have proven the following theorem. 

\begin{thm}\label{Thm:RCorrespondence}
There are 1-to-1 correspondences between the following differentials:
\begin{enumerate}
\item $\rho$-BSS $d_3$-differentials and ESSS $d_1$-differentials for $kgl$.
\item $\rho$-BSS $d_{2^{n+1}-1}$-differentials and $n$-ESSS $d_1$-differentials for $k(n)$. 
\end{enumerate}
Moreover, the ESSS and $n$-ESSS differentials are completely determined by knowledge of the corresponding $\rho$-BSS differentials.
\end{thm}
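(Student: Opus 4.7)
The plan is to string together the two preceding results, Proposition \ref{Prop:rhoalg} and Lemma \ref{Lem:algtoSSS}, which together furnish a chain of bijective correspondences $\rho\text{-BSS} \leftrightarrow \text{aESSS} \leftrightarrow \text{ESSS}$. Since both intermediate correspondences are not merely bijections but in fact show that the later differentials are \emph{forced} by the earlier ones, composing them will yield the desired assertion that the ESSS/$n$-ESSS differentials are completely determined by the $\rho$-BSS differentials.

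More concretely, for $kgl$ I would first invoke Proposition \ref{Prop:rhoalg}(1) to translate each $\rho$-BSS $d_3$-differential of the form $d_3(\tau^2) = \rho^3 v_1$ (together with its $\rho$-, $\tau$-, and $v_1$-multiples) into an aESSS $d_1$-differential $d_1(\tau^2) = \rho^3 v_1$ in the appropriate shifted copy of $\Ext_{A^{\r}}^{\ast\ast\ast}(s_q kgl)$. I would then apply Lemma \ref{Lem:algtoSSS}(1) to lift this aESSS $d_1$-differential to an ESSS $d_1$-differential; the quad-degree argument used in the proof of that lemma, together with the collapse of the relevant motivic Adams spectral sequences for $kgl$ and each slice $s_q kgl \simeq \Sigma^{2q,q}H\z_2$ (Lemma \ref{Lem:mASSCollapse}), guarantees that no additional differentials appear and that the correspondence is bijective. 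The $k(n)$ case (part (2)) proceeds identically, with Proposition \ref{Prop:rhoalg}(2) translating $\rho$-BSS $d_{2^{n+1}-1}$-differentials into $n$-aESSS $d_1$-differentials, and Lemma \ref{Lem:algtoSSS}(2) lifting these to $n$-ESSS $d_1$-differentials, using that the mASS for $k(n)$ and for $s_q^n k(n) \simeq \Sigma^{2qr,qr}H\f_2$ collapses.

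For the ``moreover" clause, I would simply observe that both Proposition \ref{Prop:rhoalg} and Lemma \ref{Lem:algtoSSS} contain an analogous ``moreover" statement, so the composed correspondence inherits the property that the right-hand differentials are forced by the left-hand ones.

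I do not expect a substantial technical obstacle here, since the real work has been done in the two preceding results: Proposition \ref{Prop:rhoalg} handled the comparison square relating the $\rho$-BSS to the algebraic slice spectral sequence via base change along $i\colon \r\to\c$, and Lemma \ref{Lem:algtoSSS} handled passage from the algebraic spectral sequence to its topological counterpart via collapse of the motivic Adams spectral sequences on the slices. The only point requiring mild care is to make sure the differentials on the two sides match at the level of named generators (so that ``$v_n$" on the $\rho$-BSS $E_1$-page really does correspond to ``$v_n$" on the $n$-ESSS $E_1$-page after the two translations); this follows from tracking the isomorphisms $\Ext_{A^{\r}}^{\ast\ast\ast}(s_q^n k(n)) \cong \Sigma^{2qr,qr}\f_2[\tau,\rho]$ through the identifications used in the proofs of the two preceding results.
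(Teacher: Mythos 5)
Your proposal matches the paper exactly: Theorem \ref{Thm:RCorrespondence} is proved there simply by concatenating Proposition \ref{Prop:rhoalg} and Lemma \ref{Lem:algtoSSS}, just as you do. The additional care you take about matching named generators is consistent with (and implicit in) the identifications made in those two proofs, so there is nothing to add.
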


\begin{rem2}\label{Rmk:RCorrespondence}
As the $n$-ESSS is obtained by increasing the speed of the effective slice filtration, a $d_1$-differential in the $n$-ESSS corresponds to a $d_{2^{n}-1}$-differential in the ESSS. Therefore part (2) of Theorem \ref{Thm:RCorrespondence} may be restated as a 1-to-1 correspondence between $\rho$-BSS $d_{2^{n+1}-1}$-differentials and $d_{2^n-1}$-differentials in the ESSS. 
\end{rem2}

Theorem \ref{Thm:RCorrespondence} can be used to prove a correspondence between more complicated differentials. The following theorem is closely related to Yagita's analysis of the effective slice spectral sequence for $BPGL$ in \cite[Sec. 4]{Yag05}. 

\begin{thm}\label{Thm:BPGLCorrespondence}
Let $m \geq 1$. For each $1 \leq i \leq m$, there is a 1-to-1 correspondence between the $\rho$-BSS $d_{2^{i+1}-1}$-differentials and ESSS $d_{2^i-1}$-differentials for $BPGL\langle m \rangle$. 
\end{thm}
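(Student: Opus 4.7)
The plan is to extend the approach of Theorem \ref{Thm:RCorrespondence} from $kgl = BPGL\langle 1 \rangle$ to general $BPGL\langle m \rangle$, using algebraic $m$-sliceability (Theorem \ref{Thm:BPGLmSLiceable}) in place of ordinary algebraic sliceability. The core object is a commuting square of spectral sequences analogous to the one in the proof of Proposition \ref{Prop:rhoalg}:
\[
\begin{tikzcd}[column sep=small]
\bigoplus_{q \geq 0} \Ext_{A^\c}^{***}(s_q^m i^*BPGL\langle m \rangle)[\rho] \ar[r, Rightarrow, "\rho\text{-BSS}"] \ar[d, Rightarrow] & \bigoplus_{q \geq 0} \Ext_{A^\r}^{***}(s_q^m BPGL\langle m \rangle) \ar[d, Rightarrow] \\
\Ext_{A^\c}^{***}(i^*BPGL\langle m \rangle)[\rho] \ar[r,Rightarrow,"\rho\text{-BSS}"'] & \Ext_{A^\r}^{***}(BPGL\langle m \rangle)
\end{tikzcd}
\]
whose columns are the $m$-aESSS over $\c$ and $\r$ respectively.

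First, I would identify the four corners. The lower left corner is $\f_2[\tau, v_0, v_1, \ldots, v_m]$ by Lemma \ref{Lem:mASSCollapse}, and the lower right corner is given by the $\r$-motivic computation of \cite[Thm. 3.1]{Hil11}, whose $\rho$-BSS from the lower left is driven by $d_{2^{i+1}-1}(\tau^{2^i}) = \rho^{2^{i+1}-1} v_i$ for $1 \leq i \leq m$. The Ext groups in the upper corners are accessible because each slice $s_q^m BPGL\langle m \rangle$ is built from the ordinary effective slices $s_i BPGL\langle m \rangle \simeq \Sigma^{2i,i} H\z_2 \otimes \pi_{2i} BP\langle m \rangle$ for $q(2^m-1) \leq i \leq (q+1)(2^m-1)-1$.

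Next, I would show that the left column collapses by the quad-degree argument used in Proposition \ref{Prop:rhoalg}: its $E_1$-page is concentrated in even $t-s$, while $d_r$ decreases $t-s$ by one. Therefore the lower right corner must be assembled from the top row and right column together. Matching against Hill's differentials forces, for $i = m$, an $m$-aESSS $d_1$-differential on the right side whose $\rho$-Bockstein refinement is precisely the $d_{2^{m+1}-1}$-differential of the $\rho$-BSS. The argument of Lemma \ref{Lem:algtoSSS} then transfers this from the $m$-aESSS to the $m$-ESSS (that is, an ESSS $d_{2^m-1}$), handling the case $i = m$.

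For $i < m$, I would use naturality with respect to the quotient map $BPGL\langle m \rangle \to BPGL\langle i \rangle$. Since $BPGL\langle i \rangle$ is algebraically $i$-sliceable, applying the preceding argument to $BPGL\langle i \rangle$ in place of $BPGL\langle m \rangle$ yields a correspondence on $BPGL\langle i \rangle$ between $\rho$-BSS $d_{2^{i+1}-1}$-differentials and ESSS $d_{2^i-1}$-differentials. Both the $\rho$-BSS differentials on $\tau^{2^i}$ and the $H_*$-ESSS differentials on $\bar\tau_i$ (from Theorem \ref{Thm:BPGLHVSS}) are visible on $BPGL\langle m \rangle$ and map to the corresponding differentials on $BPGL\langle i \rangle$, lifting the correspondence back up to $BPGL\langle m \rangle$ for each $1 \leq i \leq m$.

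The main obstacle will be bookkeeping: the right column $m$-aESSS lumps together ESSS differentials of several different lengths, so one must untangle the contribution of each $d_{2^i-1}$ from the internal structure of $s_q^m BPGL\langle m \rangle$. In particular, one has to rule out hidden higher $m$-aESSS differentials ($d_r$ with $r > 1$) in the right column; this should follow by comparing total dimensions against the sharp target provided by Hill's computation and by using the naturality of the quotient maps to constrain the possible targets.
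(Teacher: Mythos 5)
Your skeleton for the case $i=m$ --- the square \eqref{Eqn:RhoAlgBPGL} with the $m$-aESSS as columns, the collapse of the $\c$-motivic column for parity reasons, and the passage from $m$-aESSS $d_1$-differentials to $m$-ESSS $d_1$-differentials via collapsing mASS's --- is indeed the paper's route. But the step you defer as ``bookkeeping'' is the actual content of the argument. The top row of your square is not settled by ``matching against Hill'': the $\rho$-BSS for $\bigoplus_q \Ext_{A^\r}^{***}(s_q^m BPGL\langle m\rangle)$ has its own nontrivial differentials (the type-$m$ slices are extensions of suspensions of $H\z_2$; e.g. $s_0^2 BPGL\langle 2\rangle \simeq BPGL\langle 1\rangle/v_1^3$), and one must pin down exactly how it differs from Hill's $\rho$-BSS for $BPGL\langle m\rangle$. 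The paper isolates two discrepancies: \emph{long} $d_{2^{m+1}-1}$-differentials, whose targets are divisible by $v_m$ and hence vanish in the slices, and \emph{fringe} differentials, which cross type-$m$ slice boundaries (Example \ref{Exm:Fringe}). Both kinds force $m$-aESSS $d_1$-differentials, but only the long ones correspond to ESSS $d_{2^m-1}$-differentials; the fringe ones correspond to shorter ESSS differentials already covered by the inductive hypothesis. Distinguishing these two families is precisely the statement being proved, so the ``total dimension count against Hill's answer'' you propose cannot substitute for it, and ruling out longer $m$-aESSS differentials is the easy part (filtration-zero sources can only support $d_1$'s), not the hard one.

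For $i<m$ there is a genuine gap: naturality along $BPGL\langle m\rangle \to BPGL\langle i\rangle$ only controls differentials on classes with nonzero image, and every class divisible by $v_{i+1},\dots,v_m$ dies under this map, so ``lifting the correspondence back up'' is unjustified on exactly the part of the $E_1$-page where the bijection could fail. The paper closes this by inducting on $m$ one step at a time: Hill's computation shows the $\rho$-BSS $d_{2^{i+1}-1}$-differentials for $BPGL\langle m\rangle$ are the $v_m$-linear extension of those for $BPGL\langle m-1\rangle$, while on the slice side the quotient to $k(m)$ shows $v_m$ is a $d_{2^i-1}$-cycle, so the ESSS Leibniz rule \cite[Prop. 2.24]{RSO19} propagates the $BPGL\langle m-1\rangle$ differentials $v_m$-linearly. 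You would need an analogous argument --- identifying the missing generators as cycles and invoking multiplicativity --- to control the classes your quotient map kills; as written, your appeal to the differentials on $\tau^{2^i}$ and $\bar\tau_i$ being ``visible'' only tracks the generating differentials, not the full families whose bijection the theorem asserts.
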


\begin{proof}
We proceed by induction on $m$. At $p=2$, there is an equivalence $kgl \simeq BPGL \langle 1 \rangle$, so the base case $m=1$ is handled by Theorem \ref{Thm:RCorrespondence}. 

Suppose now that the result holds for all $k<m$ and we wish to show the theorem holds for $BPGL \langle m \rangle$. We begin by using the induction hypothesis to show that the theorem holds for $i<m$ (but not necessarily $i=m$). 

Observe that the $d_{2^{i+1}-1}$-differentials in the $\rho$-BSS's for $BPGL\langle m-1 \rangle$ and $BPGL \langle m \rangle$ are determined by the same differential
\begin{equation}
d_{2^{i+1}-1}(\tau^{2^i}) = \rho^{2^{i+1}-1}v_i
\end{equation}
for $1 \leq i \leq m-1$ by \cite[Thm. 3.2]{Hil11}. This implies that the $d_{2^{i+1}-1}$-differentials for $BPGL \langle m \rangle$ are precisely the $d_{2^{i+1}-1}$-differentials for $BPGL \langle m-1 \rangle$ extended $v_m$-linearly. 

On the other hand, naturality of the ESSS allows us to completely determine the $d_{2^i-1}$-differentials in  the ESSS for $BPGL \langle m \rangle$. Indeed, the quotient map $BPGL \langle m \rangle \to BPGL \langle m-1 \rangle$ implies that the $d_{2^i-1}$-differentials between slices indexed on monomials not divisible by $v_m$ coincide for $BPGL\langle m -1 \rangle$ and $BPGL \langle m \rangle$, and the quotient map $BPGL \langle m \rangle \to k(n)$ implies that $v_m$ is a $d_{2^i-1}$-cycle for $i<m$ by Theorem \ref{Thm:RCorrespondence}. Applying the Leibniz rule for the ESSS \cite[Prop. 2.24]{RSO19} proves that the $d_{2^i-1}$-differentials in the ESSS for $BPGL \langle m \rangle$ are precisely the $d_{2^i-1}$-differentials in the ESS for $BPGL \langle m-1 \rangle$ extended $v_m$-linearly. Therefore we have proven a 1-to-1 correspondence between $\rho$-BSS and ESSS differentials for $i<m$. 

We now consider the case $i=m$. Theorem \ref{Thm:BPGLmSLiceable} shows that $BPGL \langle m \rangle$ algebraically $m$-sliceable, so we may consider the comparison square \eqref{Eqn:comparisonSquare} relating its $m$-aESSS and $m$-ESSS. The mASS's for $BPGL \langle m \rangle$ and its type $m$ slices (extensions of $(2j,j)$-suspensions of $H\z_2$) all collapse, so we obtain a 1-to-1 correspondence between $d_1$-differentials in the $m$-aESSS and $m$-ESSS. 

We are therefore reduced to relating the $d_{2^{m+1}-1}$-differentials in the $\rho$-BSS and the $d_1$-differentials in the $m$-aESSS, but this follows from essentially the same arguments as Proposition \ref{Prop:rhoalg}. Indeed, consider the $BPGL \langle m \rangle$-analog of \eqref{Eqn:RhoAlg}:
\begin{equation}\label{Eqn:RhoAlgBPGL}
\begin{tikzcd}
\bigoplus_{q \geq 0} \Ext_{A^\c}^{***}({s}_q^m i^* BPGL \langle m \rangle)[\rho] \ar[r, Rightarrow, "\rho-BSS"] \ar[d, Rightarrow] & \bigoplus_{q \geq 0} \Ext_{A^\r}^{***}(s_q^m BPGL \langle m \rangle) \ar[d, Rightarrow] \\
\Ext_{A^\c}^{***}(i^* BPGL \langle m \rangle)[\rho] \ar[r,Rightarrow,"\rho-BSS"] & \Ext_{A^\r}^{***}(BPGL \langle m \rangle).
\end{tikzcd}
\end{equation}
The left column collapses for degree reasons and the differentials in the bottom row were determined in \cite[Thm. 3.2]{Hil11}. The differentials in the top row coincide with the differentials in the bottom row, with two types of exceptions:
\begin{enumerate}
\item All of the $d_{2^{m+1}-1}$-differentials, i.e. the longest nontrivial differentials, in the bottom row cannot occur in the top row since their target in the bottom is divisible by $v_m$ (which is zero in the top). We refer to these as \emph{long differentials}. 
\item The $d_r$-differentials for $r<2^{m+1}-1$ in the bottom row of the form $d_r(x)=y$ where $x$ lies in stem $s$ with $k(2^{m+1}-2) \leq s < (k+1)(2^{m+1}-2)$ and $y$ lies in stem $s'$ with $s' \geq (k+1)(2^{m+1}-2)$ cannot support differentials in the top row for degree reasons. We refer to these as \emph{fringe differentials} since they involve elements from adjacent type $m$ slices.
\end{enumerate}
A discussion fringe differentials for the case $m=2$ appears in Example \ref{Exm:Fringe}. 

The long and fringe differentials in the bottom row which do not occur in the top row both are accounted for by $d_1$-differentials in the $m$-aESSS. The $d_1$-differentials in the $m$-aESSS forced by the fringe differentials in the $\rho$-BSS give rise to $d_r$-differentials in the ESSS with $r< 2^{m}-1$, which are completely understood by the induction hypothesis. On the other hand, the $d_1$-differentials in the $m$-aESSS forced by the long differentials in the $\rho$-BSS correspond to $d_{2^m-1}$-differentials in the ESSS; this is precisely the correspondence we wished to show.

\end{proof}

\begin{exm}\label{Exm:Fringe}
	We analyze the fringe $d_r$-differentials in the $\rho$-BSS for $BPGL\langle 2\rangle$. In this case, we have the cofiber sequence
	\[
		f_3 BPGL\langle 2\rangle \to BPGL\langle 2\rangle \to s_0^2BPGL\langle 2\rangle. 
	\]
	Observe that 
	\[
		s_0^2BPGL\langle 2\rangle \simeq s_0^2BPGL\langle 1\rangle \simeq BPGL\langle 1\rangle/v_1^3. 
	\]
	Furthermore, the associated graded motivic spectrum $s_*^2BPGL\langle 2\rangle$ is given by 
	\[
		s_*^2BPGL\langle 2\rangle \simeq s_0^2 BPGL\langle 2\rangle [v_1^3, v_2].
	\]
	In particular, to understand the top $\rho$-BSS of \eqref{Eqn:RhoAlgBPGL} it is enough to understand the $\rho$-BSS for $s_0^2 BPGL\langle 2\rangle$. This can be done using the naturality of the $\rho$-BSS which produces the diagram
	\begin{equation}
		\begin{tikzcd}
			\Ext_{A^\c}^{***}(BPGL\langle 2\rangle)[\rho]\arrow[r,Rightarrow] \arrow[d] & \Ext_{A^\r}(BPGL\langle 2\rangle )\arrow[d] \\
			\Ext_{A^\c}^{***}(s_0^2BPGL\langle 2\rangle)[\rho]\arrow[r,Rightarrow] & \Ext_{A^\r}^{***}(s^2_0 BPGL\langle 2\rangle).
		\end{tikzcd}
	\end{equation}
	On $E_1$-pages, the left-hand map is the projection map 
	\[
		\f_2[\tau, \rho, v_0, v_1,v_2] \to \f_2[\tau, \rho, v_0, v_1,v_2]/(v_1^3, v_2). 
	\]
	This allows us to import differentials from the $\rho$-BSS for $BPGL\langle 2\rangle$. 

In the top and bottom rows, the $d_1$-differentials are determined by the differential $d_1(\tau)=\rho v_0$, and the morphism of spectral sequences is the obvious map on $E_2$-pages. There are no $d_2$-differentials so $E_2 = E_3$. 

We encounter a fringe differential on the $E_3$-page. In the top, there is a $d_3$-differential 
	\[
		d_3(\tau^2) = \rho^3v_1
	\]
	which implies a differential of the same form in the bottom. However, in the top there is a differential $d_3(\tau^2 v_1^2) = \rho^3v_1^3$ which cannot occur in the bottom since $v_1^3=0$. This is precisely the phenomena described by \emph{fringe differentials} in the previous proof. 

After running $d_3$-differentials, both rows collapse. Thus, in $\Ext_{A^\r}(s^2_0 BPGL\langle 2\rangle)$, there is an extra class which we denote by $[\tau^2 v_1^2]$ to remind us that it is indecomposable in $\Ext_{A^\r}$. 
	Since there are $\rho$-BSS differentials $d_3(\tau^2 v_1^2) = \rho^3v_1^3$ in the $\rho$-BSS for $BPGL\langle 2\rangle$, it follows that we must have differentials of the form 
	\[
		d_1(\rho^\ell[\tau^2v_1^2](v_1^3)^jv_2^k) = \rho^{\ell+3}(v_1^3)^{j+1}v_2^k
	\]
	in the $m$-aESS, and hence in the $m$-ESSS. These correspond to the $d_1$-differentials in the ESSS which are already understood by reduction to $BPGL \langle 1 \rangle \simeq kgl$. 
\end{exm}

\begin{rem2}
Naturality of the ESSS can be used to prove an analogous correspondence between $\rho$-BSS and ESSS differentials for $BPGL$. 
\end{rem2}

\begin{rem2}
Although we have only considered the case $k=\r$ here, it would be interesting to consider more general base fields of characteristic zero. For example, the mASS for $BPGL \langle n \rangle$ is completely understood when $k=\q$ by work of Ormsby and {\O}stv{\ae}r \cite[Thm. 5.8]{OO13}. If one extends Theorem \ref{Thm:BPGLmSLiceable} to $k=\q$, then it may be possible to understand the ESSS for $BPGL \langle n \rangle$ over $\Spec(\q)$ using Theorem \ref{Thm:comparisonSquare}. 
\end{rem2}

\subsection{Comparison over $\Spec(\f_q)$}\label{SS:CompareFq}

We now turn to the case when $k=\f_q$. 

Unlike the $\r$-motivic case, the comparison is trivial for $k(n)$, $n \geq 1$. Since $\m_2^{\f_p}$ is zero in bidegrees $(s,u)$ with $s < -1$, the $\rho$-BSS, mASS, and $n$-ESSS all collapse for degree reasons. 

However, the analysis is still interesting for $kgl$. Recall from Example \ref{exm:coefficient_ring}(3) that the $A^\vee$-coaction depends on the equivalence class of $q$ modulo $4$. We discuss the case when $q\equiv 1 \mod 4$, and the other case when $q\equiv 3\mod 4$ is similar. 

\subsubsection{$q\equiv 1\mod 4$}\hfill

By Lemma \ref{lem_mASSE_2}, we observe that in the motivic Adams spectral sequence of $kgl$ and $H\z_2,$ the $E_2$-pages have the relation that
$$\Ext^{***}_A(kgl)\cong \Ext^{***}_A(H\z_2)[v_1].$$

Since $\Ext^{***}_A(H\z_2)[v_1]$ is exactly the $E_1$-page of the algebraic effective slice spectral sequence for $kgl$, we conclude that the algebraic effective slice spectral sequence collapses.

By Theorem \ref{Thm:mASSBPGL}, the nontrivial differentials in both motivic Adams spectral sequences for $kgl$ and $H\z_2$ are determined under Leibniz rule by 
$$d_{\nu_2(q-1)+s} (\tau^{2^s})=u \tau^{2^s-1}h_0^{\nu_2(q-1)+s}, s\geq 0.$$ 

Therefore, using the motivic comparison square \eqref{Eqn:comparisonSquare}, we conclude that the effective slice spectral sequence for $kgl$ collapses.

\subsubsection{$q\equiv 3\mod 4$}\hfill

When $q\equiv 3\mod 4$, we also get that the algebraic effective slice spectral sequence and the effective slice spectral sequence for $kgl$ both collapse.

\begin{rem2}
	The work of Ormsby \cite{Orm11} shows that the situation over local fields is very similar. The above analysis can be carried over local fields to show that the effective slice spectral sequence for $kgl$, as well as $BPGL\langle m\rangle$ for all $m\geq 0$, are trivial. 
\end{rem2} 

\section{Applications to $C_2$-equivariant stable homotopy theory}\label{Sec:C2}

We now make some calculations in $C_2$-equivariant stable homotopy theory. Let $k \subseteq \r$ be a field with a real embedding. 

\begin{rem2}\label{Rem:namingconvention}
Throughout this section, we will freely use the names of elements in $\pi_{**}^{C_2}(H\f_2)$ and $\pi_{**}^{C_2}(H\z_2)$ used in \cite{GHIR19}. We briefly describe the relevant elements here.

The $C_2$-equivariant elements $\tau\in\pi_{0,-1}^{C_2}(H\f_2)$ and $\rho\in\pi_{-1,-1}^{C_2}(H\f_2)$ are the images under equivariant Betti realization of the $\r$-motivic elements with the same names.\footnote{There are also geometric models for $\tau$ and $\rho$, cf. \cite[Section 4]{May20}.}
The subalgebra $\f_2[\tau,\rho] \subseteq \pi_{**}^{C_2}(H\f_2)$ is sometimes called the \emph{positive cone}. 

There is another element, $\frac{\gamma}{\tau} \in \pi_{0,2}^{C_2}(H\f_2)$, which is infinitely $\rho$- and $\tau$-divisible but is not in the image of equivariant Betti realization. The infinitely divisible part of $\pi_{**}^{C_2}(H\f_2)$ is sometimes called the \emph{negative cone}. 

There are two sets of naming conventions for $C_2$-equivariant homotopy elements. We refer the reader to tables in Section \ref{subsection:conventions} for the correspondence.
\end{rem2}

We will use the following result of Heard to obtain $C_2$-equivariant slice differentials from known motivic differentials. We note that Heard actually proves a more general result, but we only use the following specialization in our work.

\begin{thm}[Heard]\label{Thm:Heard}\cite[Thms. 5.15-5.16]{Hea19}
Let $E \in \SH(k)$ be a motivic spectrum which is a localized quotient of $MGL$. Then $\re_{C_2} : (\SH(k), \Sigma^q_T \SH(k)^{eff}) \to (\SH^{C_2}, \Sigma^{2q}(\SH^{C_2})^{HHR})$ is compatible with the slice filtration at $E$.\footnote{We refer the reader to \cite{Hea19, HHR16} for details on the $C_2$-equivariant slice spectral sequences discussed in this section.}
\end{thm}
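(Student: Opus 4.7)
The plan is to exhibit $\re_{C_2}$ as a functor of filtered $\infty$-categories by matching the generating sets on each side, reducing the general case of a localized quotient of $MGL$ to a careful check on $MGL$ itself. Since both filtrations in question are defined by localizing subcategories generated by explicit collections of motivic (respectively $C_2$-spectra), the whole result should follow from (i) a computation of $\re_{C_2}$ on the generating cells, together with (ii) the fact that localizations and cofiber sequences induced by quotienting out the $v_i$'s behave compatibly with realization.

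First I would unpack the two filtrations. On the motivic side, $\Sigma^q_T \SH(k)^{eff}$ is the localizing subcategory generated by $\Sigma^{2q,q}\Sigma^\infty_+ X$ for $X \in Sm_k$. On the $C_2$-equivariant side, $\Sigma^{2q}(\SH^{C_2})^{HHR}$ is the localizing subcategory generated by suspensions $S^{k\rho_{C_2}} \wedge (C_2/H)_+$ satisfying the HHR degree inequality $k|H| \geq 2q$. The essential computation is that $\re_{C_2}(\Sigma^{2,1}) \simeq \Sigma^{\rho_{C_2}}$, where $\rho_{C_2} = 1 + \sigma$ is the regular representation of $C_2$; this is \eqref{Eqn:EBetti} applied to the Tate sphere and explains the degree-doubling in the statement. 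Combined with the fact that $\re_{C_2}$ sends smooth $k$-schemes to (equivariant) cell complexes built out of the same regular representation cells, this places $\re_{C_2}(\Sigma^q_T \SH(k)^{eff})$ inside $\Sigma^{2q}(\SH^{C_2})^{HHR}$, establishing the ``one-sided'' compatibility.

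Next I would establish the harder direction (that realization of effective slice \emph{covers} agrees with HHR slice covers) for $E = MGL$ itself. Here I would use Hoyois's description $s_q MGL \simeq \Sigma^{2q,q} H\z \otimes \pi_{2q}MU$ together with the HHR slice tower of $MU_\r$, where the layers are known to be appropriately suspended Eilenberg--MacLane spectra associated to $\underline{\pi}_{q\rho_{C_2}}MU_\r$. Matching these layers cell-by-cell under realization verifies that $\re_{C_2}(f_q MGL) \simeq P^{2q}\re_{C_2}(MGL)$, i.e. the slice covers align. For a localized quotient $E = MGL[S^{-1}]/(x_{i_1}, x_{i_2}, \dots)$, I would then induct on the presentation: localizing at a class $x_i$ of motivic degree $(2|x_i|, |x_i|)$ corresponds under $\re_{C_2}$ to localizing at a class of $RO(C_2)$-degree $|x_i|\rho_{C_2}$, and similarly for quotient cofiber sequences, so the compatibility descends along each step.

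The hard part will be the HHR slice filtration's finickiness: it is not the naive ``generated by $S^{k\rho_{C_2}}$'' filtration, but includes a subtle condition involving all transfers $C_2/H_+ \wedge S^{k\rho_{C_2}}$, and the very effective refinement (cf. Remark \ref{Rmk:VESSHHR}) requires additional connectivity control. Verifying that $\re_{C_2}$ respects these refinements for negatively-indexed slices, and that the cofiber sequences $f_{q+1}E \to f_q E \to s_q E$ realize to distinguished triangles in the HHR slice tower (and not merely to some coarser filtration), is the technical core; I expect this is where Heard's arguments in \cite[Thms. 5.15--5.16]{Hea19} spend most of their effort, likely by exploiting the fact that for localized quotients of $MGL$ the slices are pure Eilenberg--MacLane, so convergence and identification of the tower can be tested on homotopy.
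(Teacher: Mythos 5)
This statement is not proved in the paper at all: it is Heard's result, imported verbatim with the citation \cite[Thms.\ 5.15--5.16]{Hea19} and used as a black box in Section \ref{Sec:C2}, so there is no in-paper argument to compare your proposal against. Judged on its own terms, your sketch is a reasonable reconstruction of the shape of Heard's argument: the degree-doubling does come from $\re_{C_2}(S^{2,1}) \simeq S^{\rho_{C_2}}$, the ``easy direction'' is the one-sided inclusion of filtrations obtained by chasing generators, and the substantive step is showing that the realization of the motivic slice tower actually \emph{is} the HHR slice tower, which one verifies by using that for localized quotients of $MGL$ the motivic slices are (suspensions by $\Sigma^{2q,q}$ of) Eilenberg--MacLane spectra, whose realizations $\Sigma^{q\rho_{C_2}}H\underline{M}$ are known to be $2q$-slices, and then invoking the characterization/uniqueness of the equivariant slice tower.

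The one step I would flag as a genuine weak point is your induction over a presentation $E = MGL[S^{-1}]/(x_{i_1},x_{i_2},\dots)$, with the claim that compatibility ``descends along each step'' of localization and quotient cofiber sequences. Slice covers do not commute with cofiber sequences in general (on either the motivic or the equivariant side), so compatibility at $MGL$ does not formally propagate through the quotients; this is exactly the kind of statement that needs an argument, not just naturality. Heard sidesteps this: rather than inducting on a presentation, he works directly with the localized quotient $E$, using the Spitzweck/Hoyois-type identification of the slices of such $E$ as Eilenberg--MacLane spectra and then checking the defining connectivity/coconnectivity conditions for the realized tower to be the HHR slice tower. Your closing paragraph essentially anticipates this (``test on homotopy since the slices are pure Eilenberg--MacLane''), but as written the descent step and the final verification are in tension --- if you can test the realized tower directly against the characterization of the HHR tower, the induction over the presentation is unnecessary; if you cannot, the induction by itself does not close the gap. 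Also, a small correction: the HHR slice cells for $C_2$ are $S^{k\rho_{C_2}}$ and $C_{2+}\wedge S^{k}$ (and their single desuspensions), not arbitrary $S^{k\rho_{C_2}}\wedge (C_2/H)_+$ with a uniform inequality $k|H|\geq 2q$; the induced cells carry the trivial-subgroup regular representation, which is why the transfer-indexed cells enter the way they do.
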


Heard applied this theorem to deduce effective slice spectral sequence differentials from the existence of HHR slice differentials. We will apply Heard's theorem in the opposite direction: starting with the $\r$-motivic effective slice spectral sequence differentials computed above, we will deduce the existence of differentials in the HHR slice spectral sequence. 

\begin{cor}\label{Cor:C2General}
Let $E \in \SH(k)$ be a motivic spectrum which is a localized quotient of $MGL$. Every $d_{2^{r-1}-1}$-differential in the ESSS for $E$ uniquely determines a  $d_{2^r-1}$-differentials in the HHR slice spectral sequence for $\re_{C_2}(E)$. 
\end{cor}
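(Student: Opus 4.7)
The plan is to apply Heard's comparison theorem \ref{Thm:Heard} to produce a morphism from the motivic ESSS for $E$ to the HHR slice spectral sequence for $\re_{C_2}(E)$, and then to read off the HHR differential corresponding to each motivic ESSS differential. Since $E$ is a localized quotient of $MGL$, Heard's theorem guarantees that equivariant Betti realization sends the motivic $q$-th effective slice cover $f_q E$ to the $2q$-th HHR slice cover of $\re_{C_2}(E)$, and correspondingly sends the motivic slice $s_qE$ to the $2q$-th HHR slice $P_{2q}^{2q}\re_{C_2}(E)$. Applying $C_2$-equivariant homotopy groups to the realized slice tower yields the desired morphism of spectral sequences, with motivic filtration index $q$ mapping to HHR filtration index $2q$.

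Next I would analyze the page correspondence induced by this morphism. A motivic ESSS $d_s$-differential encodes an attaching map between slices differing by $s$ in filtration; under realization this becomes an attaching map between HHR slices differing by $2s$ in filtration. Combined with the standard indexing conventions of the HHR slice SS and the fact that the odd-indexed HHR slices vanish for localized quotients of $MGL$, a motivic ESSS $d_s$-differential maps to an HHR $d_{2s+1}$-differential on the image classes. Setting $s = 2^{r-1}-1$, so that $2s+1 = 2^r-1$, yields the claimed correspondence. Uniqueness of the resulting HHR differential is then automatic, since the morphism of spectral sequences sends each motivic class to a single HHR class whose differential is determined by the motivic one via the morphism.

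The chief technical obstacle is justifying the precise page shift $d_s \mapsto d_{2s+1}$, which requires carefully matching the motivic and HHR indexing conventions and using the explicit structure of Heard's comparison. Fortunately, the earlier computations in Theorems \ref{Thm:RCorrespondence} and \ref{Thm:BPGLCorrespondence}, together with the explicit differentials listed in Theorem \ref{MainThm:C2}, serve as consistency checks: for each motivic ESSS differential appearing there, the predicted HHR differential matches known computations by Dugger \cite{Dug05} and Hill--Hopkins--Ravenel \cite{HHR16}, confirming the page relationship.
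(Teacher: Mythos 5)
Your proposal is correct and follows essentially the same route as the paper: the corollary is deduced directly from Heard's comparison theorem (Theorem \ref{Thm:Heard}), with the realization doubling the slice filtration (using vanishing of odd equivariant slices for localized quotients of $MGL$) and the indexing conventions accounting for the page shift $d_s \mapsto d_{2s+1}$, i.e.\ $d_{2^{r-1}-1} \mapsto d_{2^r-1}$, exactly as in Remark \ref{Rem:namingconvention}'s companion Remark following the corollary, which defers the convention-matching details to Heard's Proposition 6.1.
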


\begin{rem2}
	The difference in the lengths of the differentials in Corollary \ref{Cor:C2General} stems from a difference in the indexing conventions between the motivic slice spectral sequence and the equivariant slice spectral sequence. In particular, motivic $d_1$-differentials correspond to equivariant $d_3$-differentials. See the proof of \cite[Proposition 6.1]{Hea19} for more details. 
\end{rem2}

We will apply Corollary \ref{Cor:C2General} to the following $C_2$-spectra:

\begin{defin}
We define the following $C_2$-spectra using equivariant Betti realization.\footnote{Each spectrum can also be defined directly in $C_2$-spectra. See \cite{Ati66, Dug05} for $k\r$ and \cite{HHR16, HK01} for $k\r(n)$ and $BP\r\langle n \rangle$.}
\begin{enumerate}

\item Let
$$k\r := \re_{C_2}(kgl)$$
denote the \emph{$C_2$-equivariant slice cover of Atiyah's K-theory with reality}. 

\item Let
$$k\r(n) := \re_{C_2}(k(n))$$
denote the \emph{$n$-th connective Real Morava K-theory.}

\item Let
$$BP\r\langle n \rangle := \re_{C_2}(BPGL\langle n \rangle)$$
denote the \emph{$n$-th truncated Real Brown--Peterson spectrum.}

\end{enumerate}
\end{defin}

By \cite[Ex. 3.5]{Hea19}, the motivic spectra $kgl$, $k(n)$, and $BPGL\langle n \rangle$ are localized quotients of $MGL$. Combining Theorem \ref{Thm:Heard} and Corollary \ref{Cor:C2General}, we obtain:

\begin{cor}\label{Cor:AAA}
Let $E$ be $kgl$, $k(n)$, or $BPGL\langle n \rangle$. Every $d_{2^{r-1}-1}$-differential in the ESSS for $E$ uniquely determines a $d_{2^r-1}$-differential in the HHR slice spectral sequence for $\re_{C_2}(E)$. 
\end{cor}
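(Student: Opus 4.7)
The plan is essentially to verify the hypotheses of Corollary \ref{Cor:C2General} for each of the three spectra and then invoke it. The work has already been done in setting up the comparison framework; what remains is a straightforward check.

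First, I would recall from \cite[Ex. 3.5]{Hea19} that each of the motivic spectra $kgl$, $k(n)$, and $BPGL\langle n \rangle$ is a localized quotient of algebraic cobordism $MGL$. Indeed, $kgl$ is obtained from $MGL$ by killing generators $x_i$ for $i \neq 1$ and inverting $v_1$ (or rather, taking the effective cover of the localization); $k(n)$ is obtained by killing the $x_i$ for $i \neq p^n-1$ and inverting $v_n$; and $BPGL\langle n \rangle$ is obtained by killing $x_i$ for $i \neq p^j-1$, $1 \leq j \leq n$, at the prime $p$. These presentations exhibit each as a localized quotient of $MGL$ in the sense required by Theorem \ref{Thm:Heard}.

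Next, I would apply Theorem \ref{Thm:Heard} to conclude that the equivariant Betti realization functor $\re_{C_2}$ is compatible with the slice filtrations at each of $E = kgl, k(n), BPGL\langle n \rangle$, sending the $q$-th motivic effective slice cover to the $2q$-th term of the HHR slice filtration of $\re_{C_2}(E) = k\r, k\r(n), BP\r\langle n \rangle$ respectively. This compatibility induces a map of spectral sequences from the ESSS for $E$ to the HHR slice spectral sequence for $\re_{C_2}(E)$, with the indexing shift sending motivic $d_{2^{r-1}-1}$-differentials to equivariant $d_{2^r-1}$-differentials (as explained in the proof of \cite[Prop. 6.1]{Hea19}).

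The final step is to invoke Corollary \ref{Cor:C2General}, which packages exactly this conclusion: every $d_{2^{r-1}-1}$-differential in the ESSS for $E$ uniquely determines a $d_{2^r-1}$-differential in the HHR slice spectral sequence for $\re_{C_2}(E)$. Since the hypotheses have been verified in the first step, the corollary applies directly to yield the statement. There is no serious obstacle here; the corollary is essentially a specialization of Corollary \ref{Cor:C2General}, and all three spectra have been arranged to satisfy its hypothesis by construction.
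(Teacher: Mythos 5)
Your proposal matches the paper's argument: the paper likewise observes that $kgl$, $k(n)$, and $BPGL\langle n\rangle$ are localized quotients of $MGL$ by \cite[Ex. 3.5]{Hea19} and then combines Theorem \ref{Thm:Heard} with Corollary \ref{Cor:C2General} to conclude. One small quibble: your parenthetical descriptions of the presentations are slightly off (e.g.\ connective $k(n)$ is a quotient of $MGL$ \emph{without} inverting $v_n$, which would instead give periodic $K(n)$), but this does not affect the argument since the cited example of Heard is what carries the hypothesis.
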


We identified the differentials in the ESSS for $kgl$, $k(n)$, and $BPGL\langle n \rangle$ with $\rho$-Bockstein spectral sequence differentials in Theorems \ref{Thm:RCorrespondence} and \ref{Thm:BPGLCorrespondence}. Coupled with Corollary \ref{Cor:AAA}, this allows us to produce differentials in the HHR slice spectral sequences for $k\r$, $k\r(n)$, and $BP\r\langle n \rangle$. We spell out these consequences in the next few corollaries. 

\begin{cor}\label{Cor:kRdiff}
The nontrivial differentials in the HHR slice spectral sequence for $\pi_{**}^{C_2}k\r$ are determined via the Leibniz rule by
$$d_3(\tau^2) = \rho^3 \bar{v}_1 \quad \text{ and } \quad d_3 \left(\frac{\gamma}{\rho^3\tau^2}\right) = \frac{\gamma}{\tau^4}\bar{v}_1.$$
\end{cor}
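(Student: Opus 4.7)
The plan is to combine Theorem \ref{Thm:RCorrespondence}(1), Corollary \ref{Cor:AAA}, and the multiplicative structure of the HHR slice spectral sequence to derive both differentials from the already-established $\rho$-BSS differential for $kgl$ at $\r$.

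First, by Proposition \ref{Prop:rhoalg}, the unique (up to $\rho$- and $v_1$-multiples) nontrivial $d_3$-differential in the $\rho$-BSS converging to $\Ext_{A^\r}^{***}(kgl)$ is $d_3(\tau^2) = \rho^3 v_1$, and Theorem \ref{Thm:RCorrespondence}(1) identifies this with the ESSS $d_1$-differential $d_1(\tau^2) = \rho^3 v_1$ in the $\r$-motivic effective slice spectral sequence for $kgl$. Since $kgl$ is a localized quotient of $MGL$ by \cite[Ex. 3.5]{Hea19}, Corollary \ref{Cor:AAA} transports this $d_1$-differential across equivariant Betti realization to a $d_3$-differential in the HHR slice spectral sequence for $k\r = \re_{C_2}(kgl)$. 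Using that $\re_{C_2}$ sends the motivic classes $\tau, \rho, v_1$ to their $C_2$-equivariant namesakes $\tau, \rho, \bar v_1$, we obtain the positive-cone differential $d_3(\tau^2) = \rho^3 \bar v_1$.

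For the negative-cone differential, I would exploit the $\pi_{**}^{C_2}H\z_2$-module structure on the $E_1$-page together with the Leibniz rule. The product $\tau^2 \cdot (\gamma/\rho^3\tau^2)$ vanishes in $\pi_{**}^{C_2}H\z_2$: there is no class ``$\gamma/\rho^3$'' in the negative cone (the negative cone of $\pi_{**}^{C_2}H\z_2$ is only infinitely $\tau$-divisible, not $\rho$-divisible to an arbitrary degree), so $\rho^3$ annihilates the relevant piece. Applying $d_3$ to both sides of $\tau^2 \cdot (\gamma/\rho^3\tau^2) = 0$ and expanding via Leibniz then gives
\begin{equation*}
0 \;=\; \rho^3 \bar v_1 \cdot \tfrac{\gamma}{\rho^3\tau^2} \,+\, \tau^2 \cdot d_3\!\bigl(\tfrac{\gamma}{\rho^3\tau^2}\bigr) \;=\; \bar v_1 \cdot \tfrac{\gamma}{\tau^2} \,+\, \tau^2 \cdot d_3\!\bigl(\tfrac{\gamma}{\rho^3\tau^2}\bigr).
\end{equation*}
Since $\tau$ acts injectively on the relevant negative-cone bidegree, this forces $d_3(\gamma/\rho^3\tau^2) = \gamma\bar v_1/\tau^4$, as claimed.

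To conclude that every nontrivial differential is a Leibniz consequence of these two, I would observe that the positive cone of the $E_1$-page is generated over $\pi_{**}^{C_2}H\z_2$ by the powers $\bar v_1^k$, each a permanent cycle for tridegree reasons, while the negative cone is cyclic over the positive cone on the class $\gamma/(\rho^3\tau^2)$ subject to the relations in $\pi_{**}^{C_2}H\z_2$. The main obstacle will be rigorously justifying the vanishing of the product $\tau^2 \cdot (\gamma/\rho^3\tau^2)$ and the $\tau$-injectivity used in the negative-cone Leibniz step; both of these are bookkeeping consequences of the $RO(C_2)$-graded structure of $\pi_{**}^{C_2}H\z_2$ recorded in Table \ref{Table:C2} and described in detail in \cite{GHIR19}, so the difficulty is technical rather than conceptual.
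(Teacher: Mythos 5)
Your argument is correct and follows essentially the same route as the paper: the positive-cone differential comes from Theorem \ref{Thm:RCorrespondence} and Corollary \ref{Cor:AAA}, and the negative-cone differential is extracted by applying the Leibniz rule to the vanishing product $\tau^2\cdot\frac{\gamma}{\rho^3\tau^2}$ and then dividing by $\tau^2$, exactly as in the paper's proof, which writes $0=d\bigl(\frac{\gamma}{\rho^3\tau^2}\cdot\tau^2\bigr)$ and concludes the same way. One small correction: your parenthetical justification for that vanishing is misstated --- the negative cone of $\pi_{**}^{C_2}H\z_2$ \emph{is} infinitely $\rho$-divisible as well as $\tau$-divisible (Remark \ref{Rem:namingconvention}); the product $\tau^2\cdot\frac{\gamma}{\rho^3\tau^2}$ vanishes not because $\rho$-divisibility fails but because multiplication by $\tau^2$ annihilates a negative-cone class once the $\tau$-power in its denominator is exhausted, i.e.\ there is no nonzero class ``$\gamma/\rho^3$''. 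With that fix, your Leibniz step, the $\tau$-injectivity claim, and the closing argument that everything else follows by multiplicativity with no room for further differentials all match the paper's proof.
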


\begin{proof}
The differential $d_1(\tau^2) = \rho^3 v_1$ in the ESSS for $kgl$ arising from Theorem \ref{Thm:RCorrespondence} gives rise to the differential $d_3(\tau^2) = \rho^3 \bar{v}_1$ in the HHR slice spectral sequence for $k\r$ using Corollary \ref{Cor:AAA}. 

We now prove the second differential. By the Leibniz rule,
	\begin{align*}
		0=d\left(\frac{\gamma}{\rho^3\tau^2}\cdot \tau^2\right)=&d\left(\frac{\gamma}{\rho^3\tau^2}\right)\cdot \tau^2+\frac{\gamma}{\rho^3\tau^2}\cdot d(\tau^2)
	\end{align*} 
	Therefore we have that 
	$$d\left(\frac{\gamma}{\rho^3\tau^2}\right)\cdot \tau^2=\frac{\gamma}{\rho^3\tau^2}\cdot d(\tau^2).$$
	Since the element $\tau^2$ support a $d_3$-differential, we deduce that the element $\frac{\gamma}{\rho^3\tau^2}$ lives to the $E_3$-page and supports a differential with target $\frac{\gamma}{\tau^4} \bar{v}_1$. This differential generates a family of $d_3$-differentials by multiplicity. 

By inspection, there is no room for further differentials. This completes the proof.
\end{proof}

\begin{cor}\label{Cor:kndiff}
There is a nontrivial differential in the HHR slice spectral sequence for $\pi_{**}^{C_2}k\r(n)$ of the form
$$d_{2^{n+1}-1}(\tau^{2^n}) = \rho^{2^{n+1}-1} \bar{v}_n.$$
\end{cor}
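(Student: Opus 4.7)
The plan is to chain together the three comparison results exactly as in the proof of Corollary \ref{Cor:kRdiff}, tracking a single algebraic differential all the way to the HHR slice spectral sequence. Starting point: Proposition \ref{Prop:RhoBocksteinEQn} provides the $\rho$-Bockstein differential
\[
d_{2^{n+1}-1}(\tau^{2^n}) = \rho^{2^{n+1}-1} v_n
\]
in the spectral sequence converging to $\Ext_{E(Q_n)}^{***}(\m_2^\r, \m_2^\r)$, which is the mASS $E_2$-page for $k(n)$ by \cite[Lem. 6.10]{Hoy15}.

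Next, I would invoke Theorem \ref{Thm:RCorrespondence}(2) to translate this $\rho$-BSS $d_{2^{n+1}-1}$-differential into a $d_1$-differential of the same form in the $n$-ESSS for $k(n)$. (Under the hood: Theorem \ref{Thm:knAlg} guarantees $k(n)$ is algebraically $n$-sliceable, Lemma \ref{Lem:mASSCollapse} shows the relevant mASS's collapse, and these ingredients drive the comparison square \eqref{Eqn:comparisonSquare} used to prove Theorem \ref{Thm:RCorrespondence}.) By Remark \ref{Rmk:RCorrespondence}, this $n$-ESSS $d_1$ is equivalent to a $d_{2^n-1}$-differential of the same form in the standard ESSS for $k(n)$.

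Finally, since $k(n)$ is a localized quotient of $MGL$, Corollary \ref{Cor:AAA} applies: taking $r = n+1$ so that $2^{r-1}-1 = 2^n - 1$, the motivic $d_{2^n-1}$-differential determines a $d_{2^{n+1}-1}$-differential in the HHR slice spectral sequence for $\re_{C_2}(k(n)) = k\r(n)$. Under equivariant Betti realization, the motivic classes $\tau$, $\rho$, and $v_n$ map to the $C_2$-equivariant classes with the same names (with $v_n$ renamed $\bar{v}_n$, following the conventions of Remark \ref{Rem:namingconvention}), producing the stated differential.

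The only real subtlety is indexing bookkeeping: the doubling of differential length from motivic $d_{2^n-1}$ to equivariant $d_{2^{n+1}-1}$ reflects the factor-of-two difference between the effective and HHR slice filtration speeds, as recorded in Corollary \ref{Cor:C2General} and \cite[Prop. 6.1]{Hea19}. Once that accounting is correct, the existence of the differential is immediate from the chain of comparisons; no further argument (e.g., ruling out alternative targets) is needed since the target $\rho^{2^{n+1}-1}\bar{v}_n$ is uniquely determined by the input $\rho$-BSS differential at each stage of the chain.
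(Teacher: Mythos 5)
Your proposal is correct and follows essentially the same route as the paper's proof: Proposition \ref{Prop:RhoBocksteinEQn} supplies the $\rho$-Bockstein differential, Theorem \ref{Thm:RCorrespondence} together with Remark \ref{Rmk:RCorrespondence} converts it to the $d_{2^n-1}$-differential in the ESSS for $k(n)$, and Corollary \ref{Cor:AAA} (with the same indexing bookkeeping, $r=n+1$) transports it to the stated $d_{2^{n+1}-1}$-differential in the HHR slice spectral sequence for $k\r(n)$. No gaps; the extra remarks about Theorem \ref{Thm:knAlg} and Lemma \ref{Lem:mASSCollapse} merely make explicit the ingredients already packaged inside Theorem \ref{Thm:RCorrespondence}.
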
 

\begin{proof}
By Theorem \ref{Thm:RCorrespondence} and Remark \ref{Rmk:RCorrespondence}, there is a 1-to-1 correspondence between $d_{2^{n+1}-1}$-differentials in the $\rho$-BSS and $d_{2^n-1}$-differentials in the ESSS for $k\r(n)$. By Proposition \ref{Prop:RhoBocksteinEQn}, the nontrivial differentials in the $\rho$-BSS are generated under $\rho$- and $v_n$-linearity by the differentials
$$d_{2^{n+1}-1}(\tau^{2^n}) = \rho^{2^{n+1}-1}v_n.$$
By Corollary \ref{Cor:AAA}, these $\r$-motivic differentials uniquely determine the stated $d_{2^{n+1}-1}$-differentials in the HHR slice spectral sequence for $k\r(n)$. 
\end{proof}

Similar arguments also apply to truncated Real Brown--Peterson spectra, where one uses Theorem \ref{Thm:BPGLCorrespondence} and \cite[Thm. 3.2]{Hil11} to identify the differentials in the ESSS for $BPGL\langle n \rangle$.  

\begin{cor}\label{Cor:BPRdiff}
There are nontrivial differentials in the HHR slice spectral sequence for $\pi_{**}^{C_2}BP\r\langle n \rangle$ of the form
$$d_{2^{i+1}-1}(\tau^{2^i}) = \rho^{2^{i+1}-1} \bar{v}_i$$
for $1 \leq i \leq n$. 
\end{cor}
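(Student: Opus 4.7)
The strategy is parallel to the proofs of Corollaries \ref{Cor:kRdiff} and \ref{Cor:kndiff}: start from the $\rho$-Bockstein differentials, transport them across the comparison square to obtain $\r$-motivic effective slice differentials for $BPGL\langle n \rangle$, and then push these forward to $C_2$-equivariant slice differentials via Heard's comparison.

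The plan is to proceed in three steps. First, I would invoke Hill's computation \cite[Thm.~3.2]{Hil11} which identifies the nontrivial $\rho$-BSS differentials for $BPGL \langle n \rangle$ as those generated under the Leibniz rule by
\[
d_{2^{i+1}-1}(\tau^{2^i}) = \rho^{2^{i+1}-1}v_i, \qquad 1 \leq i \leq n.
\]
Second, I would apply Theorem \ref{Thm:BPGLCorrespondence}, which provides a $1$-to-$1$ correspondence between these $\rho$-BSS $d_{2^{i+1}-1}$-differentials and ESSS $d_{2^i-1}$-differentials for $BPGL\langle n \rangle \in \SH(\r)$. This produces the $\r$-motivic effective slice differentials
\[
d_{2^i-1}(\tau^{2^i}) = \rho^{2^{i+1}-1}v_i, \qquad 1 \leq i \leq n.
\]
Third, since $BPGL \langle n \rangle$ is a localized quotient of $MGL$ \cite[Ex.~3.5]{Hea19}, Corollary \ref{Cor:AAA} applies, and each $d_{2^i-1}$-differential in the ESSS for $BPGL \langle n \rangle$ uniquely determines a $d_{2^{i+1}-1}$-differential in the HHR slice spectral sequence for $BP\r\langle n \rangle = \re_{C_2}(BPGL\langle n\rangle)$. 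Tracing the elements through equivariant Betti realization using the naming conventions recalled in Remark \ref{Rem:namingconvention} and Table \ref{Table:C2}, the $\r$-motivic classes $\tau$, $\rho$, and $v_i$ realize to the equivariant classes of the same name (with $v_i$ sent to $\bar{v}_i$), yielding the claimed differentials
\[
d_{2^{i+1}-1}(\tau^{2^i}) = \rho^{2^{i+1}-1}\bar{v}_i, \qquad 1 \leq i \leq n.
\]

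The argument is essentially bookkeeping once the three prior results are in place; the only substantive check is that the shift in spectral sequence page numbers (motivic $d_{2^i-1}$ to equivariant $d_{2^{i+1}-1}$) is consistent with Heard's comparison, which follows from the doubling between the $\r$-motivic effective slice filtration and the HHR slice filtration as discussed after Corollary \ref{Cor:C2General}. I expect no main obstacle beyond this verification, since all the heavy lifting has already been done in Theorem \ref{Thm:BPGLCorrespondence}.
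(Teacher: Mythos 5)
Your proposal is correct and follows essentially the same route as the paper: the paper deduces Corollary \ref{Cor:BPRdiff} exactly by combining Hill's $\rho$-Bockstein differentials \cite[Thm. 3.2]{Hil11} with the correspondence of Theorem \ref{Thm:BPGLCorrespondence} and then applying Corollary \ref{Cor:AAA}, as in the proof of Corollary \ref{Cor:kndiff}. Your check of the page-number shift (motivic $d_{2^i-1}$ to equivariant $d_{2^{i+1}-1}$) matches the indexing in Corollary \ref{Cor:C2General}.
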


\begin{rem2}
Using naturality and the quotient maps $BP\r \to BP\r\langle n \rangle$, the corollary recovers the $p$-local version of the $G=C_2$-case of the Hill--Hopkins--Ravenel Slice Differentials Theorem \cite[Thm. 9.9]{HHR16}.\footnote{They note this originally appeared in the work of Hu--Kriz \cite{HK01} and in unpublished work of Araki.}
\end{rem2}

\bibliographystyle{abbrv}
\bibliography{master}

\begin{thebibliography}{10}

\bibitem{ARO17}
A.~Ananyevskiy, O.~R\"{o}ndigs, and P.~A. {\O}stv{\ae}r.
\newblock On very effective hermitian {$K$}-theory.
\newblock {\em Math. Z.}, 294(3-4):1021--1034, 2020.

\bibitem{Ati66}
M.~F. Atiyah.
\newblock K-theory and reality.
\newblock {\em The Quarterly Journal of Mathematics}, 17(1):367--386, 1966.

\bibitem{Bac17}
T.~Bachmann.
\newblock The generalized slices of {H}ermitian {K}-theory.
\newblock {\em Journal of Topology}, 10(4):1124--1144, 2017.

\bibitem{BS20}
M.~Behrens and J.~Shah.
\newblock ${C}_2$-equivariant stable homotopy from real motivic stable
  homotopy.
\newblock {\em Ann. K-Theory}, 5(3):411--464, 2020.

\bibitem{BGI20}
E.~Belmont, B.~Guillou, and D.~Isaksen.
\newblock ${C}_2$-equivariant and {R}-motivic stable stems, {I}{I}.
\newblock {\em Proceedings of the American Mathematical Society},
  149(1):53--61, 2021.

\bibitem{BI20}
E.~Belmont and D.~C. Isaksen.
\newblock ${R}$-motivic stable stems.
\newblock {\em arXiv preprint arXiv:2001.03606}, 2020.

\bibitem{Boa98}
J.~M. Boardman.
\newblock Conditionally convergent spectral sequences.
\newblock In {\em Homotopy invariant algebraic structures ({B}altimore, {MD},
  1998)}, volume 239 of {\em Contemp. Math.}, pages 49--84. Amer. Math. Soc.,
  Providence, RI, 1999.

\bibitem{Bor03}
S.~Borghesi.
\newblock Algebraic {M}orava {K}-theories.
\newblock {\em Inventiones mathematicae}, 151(2):381--413, 2003.

\bibitem{Bor09}
S.~Borghesi.
\newblock Algebraic {M}orava {$K$}-theory spectra over perfect fields.
\newblock {\em Ann. Sc. Norm. Super. Pisa Cl. Sci. (5)}, 8(2):369--390, 2009.

\bibitem{Dug05}
D.~Dugger.
\newblock An {A}tiyah-{H}irzebruch spectral sequence for {$KR$}-theory.
\newblock {\em $K$-Theory}, 35(3-4):213--256 (2006), 2005.

\bibitem{DI05}
D.~Dugger and D.~C. Isaksen.
\newblock Motivic cell structures.
\newblock {\em Algebraic \& Geometric Topology}, 5(2):615--652, 2005.

\bibitem{GHIR19}
B.~J. Guillou, M.~A. Hill, D.~C. Isaksen, and D.~C. Ravenel.
\newblock The cohomology of ${C}_2$-equivariant ${A}(1)$ and the homotopy of
  $ko_{{C}_2}$.
\newblock {\em Tunisian Journal of Mathematics}, 2(3):567--632, 2019.

\bibitem{GRSO12}
J.~J. Guti{\'e}rrez, O.~R{\"o}ndigs, M.~Spitzweck, and P.~A. {\O}stv{\ae}r.
\newblock Motivic slices and coloured operads.
\newblock {\em Journal of Topology}, 5(3):727--755, 2012.

\bibitem{Hea19}
D.~Heard.
\newblock On equivariant and motivic slices.
\newblock {\em Algebraic \& Geometric Topology}, 19(7):3641--3681, 2019.

\bibitem{Hil11}
M.~A. Hill.
\newblock Ext and the motivic {S}teenrod algebra over $\mathbb{R}$.
\newblock {\em Journal of Pure and Applied Algebra}, 215(5):715--727, 2011.

\bibitem{HHR16}
M.~A. Hill, M.~J. Hopkins, and D.~C. Ravenel.
\newblock On the nonexistence of elements of {K}ervaire invariant one.
\newblock {\em Annals of Mathematics}, 184(1):1--262, 2016.

\bibitem{Hoy15}
M.~Hoyois.
\newblock From algebraic cobordism to motivic cohomology.
\newblock {\em Journal f{\"u}r die reine und angewandte Mathematik (Crelles
  Journal)}, 2015(702):173--226, 2015.

\bibitem{HKO17}
M.~Hoyois, S.~Kelly, and P.~A. {\O}stv{\ae}r.
\newblock The motivic {S}teenrod algebra in positive characteristic.
\newblock {\em Journal of the European Mathematical Society},
  19(12):3813--3849, 2017.

\bibitem{HK01}
P.~Hu and I.~Kriz.
\newblock Real-oriented homotopy theory and an analogue of the
  {A}dams--{N}ovikov spectral sequence.
\newblock {\em Topology}, 40(2):317--399, 2001.

\bibitem{HKO11a}
P.~Hu, I.~Kriz, and K.~Ormsby.
\newblock Convergence of the motivic {A}dams spectral sequence.
\newblock {\em Journal of K-theory}, 7(3):573--596, 2011.

\bibitem{IO18}
D.~C. Isaksen and P.~A. {\O}stv{\ae}r.
\newblock Motivic stable homotopy groups.
\newblock In {\em Handbook of homotopy theory}, CRC Press/Chapman Hall Handb.
  Math. Ser., pages 757--791. CRC Press, Boca Raton, FL, [2020] \copyright
  2020.

\bibitem{IWX20}
D.~C. Isaksen, G.~Wang, and Z.~Xu.
\newblock More stable stems.
\newblock {\em arXiv preprint arXiv:2001.04511}, 2020.

\bibitem{Kon20}
H.~J. Kong.
\newblock The ${C}_2 $-effective spectral sequence for ${C}_2 $-equivariant
  connective real {K}-theory.
\newblock {\em arXiv preprint arXiv:2004.00806}, 2020.

\bibitem{Kyl15}
J.~I. Kylling.
\newblock Hermitian {K}-theory of finite fields via the motivic {A}dams
  spectral sequence.
\newblock Master's thesis, University of Oslo, Norway, 2015.

\bibitem{Kylling2017}
J.~I. Kylling.
\newblock {Recursive formulas for the motivic Milnor basis}.
\newblock {\em New York Journal of Mathematics}, 23(250399):49--58, mar 2017.

\bibitem{KW19}
J.~I. Kylling and G.~M. Wilson.
\newblock Strong convergence in the motivic {A}dams spectral sequence.
\newblock {\em arXiv preprint arXiv:1901.03399}, 2019.

\bibitem{Lev08}
M.~Levine.
\newblock The homotopy coniveau tower.
\newblock {\em Journal of Topology}, 1(1):217--267, 2008.

\bibitem{LT15}
M.~Levine and G.~S. Tripathi.
\newblock Quotients of {M}{G}{L}, their slices and their geometric parts.
\newblock {\em Doc. Math. Extra}, 7:407--442, 2015.

\bibitem{Man18}
L.~Mantovani.
\newblock Localizations and completions in motivic homotopy theory.
\newblock {\em arXiv preprint arXiv:1810.04134}, 2018.

\bibitem{May20}
C.~May.
\newblock A structure theorem for ${R}{O}({C}_2)$--graded {B}redon cohomology.
\newblock {\em Algebr. Geom. Topol.}, 20(4):1691--1728, 2020.

\bibitem{Mor99}
F.~Morel.
\newblock Suite spectrale d'{A}dams et invariants cohomologiques des formes
  quadratiques.
\newblock {\em Comptes Rendus de l'Acad{\'e}mie des Sciences-Series
  I-Mathematics}, 328(11):963--968, 1999.

\bibitem{MV99}
F.~Morel and V.~Voevodsky.
\newblock ${A}^1$-homotopy theory of schemes.
\newblock {\em Publications Math{\'e}matiques de l'IH{\'E}S}, 90:45--143, 1999.

\bibitem{Orm11}
K.~M. Ormsby.
\newblock Motivic invariants of {$p$}-adic fields.
\newblock {\em J. K-Theory}, 7(3):597--618, 2011.

\bibitem{OO13}
K.~M. Ormsby and P.~{\O}stv{\ae}r.
\newblock Motivic {B}rown-{P}eterson invariants of the rationals.
\newblock {\em Geometry \& Topology}, 17(3):1671--1706, 2013.

\bibitem{RO16}
O.~R{\"o}ndigs and P.~{\O}stv{\ae}r.
\newblock Slices of hermitian {K}-theory and {M}ilnor's conjecture on quadratic
  forms.
\newblock {\em Geometry \& Topology}, 20(2):1157--1212, 2016.

\bibitem{RSO19}
O.~R{\"o}ndigs, M.~Spitzweck, and P.~Arne~{\O}stv{\ae}r.
\newblock The first stable homotopy groups of motivic spheres.
\newblock {\em Annals of Mathematics}, 189(1):1--74, 2019.

\bibitem{Spi10}
M.~Spitzweck.
\newblock Relations between slices and quotients of the algebraic cobordism
  spectrum.
\newblock {\em Homology, Homotopy and Applications}, 12(2):335--351, 2010.

\bibitem{SO12}
M.~Spitzweck and P.~A. {\O}stv{\ae}r.
\newblock Motivic twisted {K}--theory.
\newblock {\em Algebraic \& Geometric Topology}, 12(1):565--599, 2012.

\bibitem{Sta16}
S.-T. Stahn.
\newblock The motivic {A}dams-{N}ovikov spectral sequence at odd primes over
  $\mathbb{C}$ and $\mathbb{R}$.
\newblock {\em arXiv preprint arXiv:1606.06085}, 2016.

\bibitem{Voe98a}
V.~Voevodsky.
\newblock A1-homotopy theory.
\newblock In {\em Proceedings of the international congress of mathematicians},
  volume~1, pages 579--604. Berlin, 1998.

\bibitem{Voe98}
V.~Voevodsky.
\newblock Open problems in the motivic stable homotopy theory. {I}.
\newblock {\em Motives, polylogarithms and Hodge theory, Part I (Irvine, CA,
  1998)}, 3:3--34, 1998.

\bibitem{Voe03}
V.~Voevodsky.
\newblock Reduced power operations in motivic cohomology.
\newblock {\em Publications Math{\'e}matiques de l'Institut des Hautes
  {\'E}tudes Scientifiques}, 98(1):1--57, 2003.

\bibitem{WO17}
G.~M. Wilson and P.~A. {\O}stv{\ae}r.
\newblock Two-complete stable motivic stems over finite fields.
\newblock {\em Algebraic \& Geometric Topology}, 17(2):1059--1104, 2017.

\bibitem{Yag05}
N.~Yagita.
\newblock Applications of {A}tiyah--{H}irzebruch spectral sequences for motivic
  cobordism.
\newblock {\em Proceedings of the London Mathematical Society}, 90(3):783--816,
  2005.

\end{thebibliography}

\end{document}